\documentclass[11pt]{article}
\usepackage{amsfonts}
\usepackage{color}
\usepackage{amsmath,amssymb,comment}
\usepackage{verbatim}
\newtheorem{theo}{Theorem}[section]
\newtheorem{lem}[theo]{Lemma}
\newtheorem{cor}[theo]{Corollary}

\newcommand{\mysection}[1]{\section{#1} \setcounter{equation}{0}}
    \makeatletter
\def\@fnsymbol#1{\ensuremath{\ifcase#1\or *\or \ddagger\or
   \mathsection\or \mathparagraph\or \|\or **\or \dagger\dagger
   \or \ddagger\ddagger \else\@ctrerr\fi}}
    \makeatother
\newcommand{\proof}{{\sc Proof.} \quad}
\newcommand{\proofc}{{\sc Proof} \ }
\newcommand{\be}{\begin{equation} \label}
\newcommand{\ee}{\end{equation}}
\newcommand{\bea}{\begin{eqnarray}\label}
\newcommand{\eea}{\end{eqnarray}}
\newcommand{\bas}{\begin{eqnarray*}}
\newcommand{\eas}{\end{eqnarray*}}
\newcommand{\bit}{\begin{itemize}}
\newcommand{\eit}{\end{itemize}}
\newcommand{\qed}{\hfill$\Box$ \vskip.2cm}
\newcommand{\nn}{\nonumber}
\newcommand{\R}{\mathbb{R}}

\newcommand{\N}{\mathbb{N}}
\newcommand{\pO}{\partial\Omega}

\newcommand{\eps}{\varepsilon}

\newcommand{\wto}{\rightharpoonup}

\newcommand{\hra}{\hookrightarrow}
\newcommand{\io}{\int_\Omega}

\newcommand{\na}{\nabla}
\newcommand{\Del}{\Delta}
\newcommand{\del}{\delta}
\newcommand{\al}{\alpha}

\newcommand{\Lam}{\Lambda}

\newcommand{\pa}{\partial}
\newcommand{\bom}{\overline{\Omega}}
\newcommand{\Om}{\Omega}

\newcommand{\wh}{\widehat}

\newcommand{\hs}{\hspace*}
\newcommand{\sm}{\setminus}

\newcommand{\vp}{\varphi}
\newcommand{\lbal}{\left\{ \begin{array}{l}}
\newcommand{\lball}{\left\{ \begin{array}{ll}}
\newcommand{\ear}{\end{array} \right.}

\newcommand{\abs}{\\[5pt]}

\newcommand{\adb}{\allowdisplaybreaks}

\newcommand{\tme}{T_{max,\eps}}

\newcommand{\ueps}{u_\eps}
\newcommand{\veps}{v_\eps}
\newcommand{\weps}{w_\eps}
\newcommand{\heps}{h_\eps}
\newcommand{\feps}{f_\eps}

\newcommand{\yeps}{y_\eps}

\begin{document}
\adb
\title{Preventing $L^p$ blow-up by local anisotropy of signal production\\
in the Keller-Segel system with strongly differing diffusion rates}
\author
{
Youshan Tao\footnote{taoys@sjtu.edu.cn}\\
{\small School of Mathematical Sciences, CMA-Shanghai, Shanghai Jiao Tong University,}\\
{\small Shanghai 200240, P.R.~China}
 \and
Michael Winkler\footnote{michael.winkler@math.uni-paderborn.de}\\
{\small Universit\"at Paderborn, Institut f\"ur Mathematik,}\\
{\small 33098 Paderborn, Germany} }
\date{}
\maketitle
\begin{abstract}
\noindent
In a smoothly bounded domain $\Om\subset\R^n$, $n\le 5$, the manuscript considers the variant of the Keller-Segel system given by
\bas
	\left\{ \begin{array}{l}	
	u_t = D \Delta u - \nabla \cdot (u\nabla v), \\[1mm]
	v_t = d \Delta v + \nabla \cdot (u\nabla v) - v + u,
	\end{array} \right.
\eas
which involves an additional contribution $\na \cdot (u\na v)$ to the chemoattractant evolution,
in line with refined modeling literature reflecting
an anisotropic correction to the isotropic signal production term $+u$ in the classical Keller-Segel model.\abs
It is shown that for arbitrary $D>0$ and $d>0$ and
any nonnegative intial data from $W^{1, \infty}(\Om)\times W^{1, \infty}(\Om)$,
an associated Neumann problem admits a global weak solution $(u,v)$ which, inter alia, satisfies
\bas
	{\rm ess} \sup_{\hspace*{-2.5mm} t>0} \io e^{u^\alpha(\cdot,t)} < \infty
\eas
with some $\alpha>0$.\abs
\noindent
{\bf Key words:} chemotaxis; degenerate diffusion; blow-up\\
{\bf MSC 2020:} 35K65 (primary); 35K52, 35B45, 35Q92, 92C17 (secondary)	

\end{abstract}
\newpage
\section{Introduction}\label{intro}
To describe aggregation phenomena in bacterial populations, Keller and Segel (\cite{KS}) proposed the model
\be{00}
	\left\{ \begin{array}{l}	
	u_t = D \Del u - \na \cdot (u\na v), \\[1mm]
	v_t =  d \Del v - v + u,
	\end{array} \right.
\ee
with $u=u(x,t)$ and $v=v(x,t)$ denoting the population density and signal concentration, respectively,
in which the bacteria are attracted by a chemical signal produced by themselves. Chemotaxis mechanisms
of this form have been found to have wide applications in biology and ecology, and aslo in social sciences
(cf., e.g., \cite{painter_JTB}).\abs
This model (\ref{00}) possesses two favorable mathematical properties:
Firstly, it has a Lyapunov structure formally expressed in the identity
\be{001}
	\frac{d}{dt} \bigg\{ \frac{d}{2} \io |\na v|^2+\frac{1}{2}\io v^2 -\io uv +D\io u \ln u\bigg\}
	=-\io v_t^2 -\io \Big| D\frac{\na u}{\sqrt{u}}-\sqrt{u}\na v\Big|^2,
\ee
valid along suitably regular trajectories (\cite{NSY}).
Secondly, the attractant concentration $v$ satisfies an inhomogeneous linear parabolic equation, accessible to
classical analysis based on smoothing properties of corresponding heat semigroups.
Suitable combination of these fundamental features has substantially influenced previous studies on (\ref{00}),
and has thereby, inter alia, facilitated the discovery of dichotomies between globally smooth behavior
on the one hand (\cite{NSY}, \cite{cao_small}), and the occurrence of singularity formation on the other (\cite{horstmann_wang},
\cite{win_JMPA}, \cite{win_NON}, \cite{mizoguchi_win});
In addition to these and partially even farther-reaching results obtained for certain parabolic-elliptic simplifications
(\cite{biler1998}, \cite{nagai1995}, \cite{nagai2001}, \cite{perthame_ejde2006}, \cite{perthame_crmas2004}, \cite{senba_suzuki2001},
\cite{bai_zhou_MAAN}), taxis-driven blow-up has also beed detected in closely related complex models
(cf.~\cite{cao_fuest}, \cite{chiyoda_ACAP2020}, \cite{fujie_jiang}, \cite{fujie_senba}, \cite{jin_wang}, \cite{youshan_zhian_M3AS},
\cite{taowin_JEMS} and \cite{xinyu_tu_M3AS} for a small selection).\abs
{\bf A Keller-Segel-type model with anisotropic production of signals.} \quad
In the study of clustering and pattern formation among autophoretic colloids, the authors in \cite{cates_prl2015}
found that the chemical is produced by the colloid asymmetrically due to the anisotropic
properties of Janus particles, and they introduced an additional term $\na \cdot (u\na v)$
to describe a certain anisotropic correction to the isotropic signal production term $+u$.
As a consequence, the signal evolution is accordingly characterized by an equation of the form
\be{002}
	v_t =  d \Del v +\na \cdot (u\na v)- v + u.
\ee
Although a number of variants of (\ref{00}) that involve various alternative types of migration mechanisms
such as nonlinear diffusion or modified chemotactic responses have been extensively studied
in the literature (\cite{BBTW}), possible effects of such anisotropies in chemoattractant production
have been much less explored so far.
In fact, when embedded into the corresponding initial-boundary value problem
\be{0}
	\left\{ \begin{array}{ll}	
	u_t = D \Del u - \na \cdot (u\na v),
	\qquad & x\in\Om, \ t>0, \\[1mm]
	v_t =  d \Del v + \na \cdot (u\na v) - v + u,
	\qquad & x\in\Om, \ t>0, \\[1mm]
	\frac{\pa u}{\pa\nu}=\frac{\pa v}{\pa\nu}=0,
	\qquad & x\in\pO, \ t>0, \\[1mm]
	u(x,0)=u_0(x), \quad v(x,0)=v_0(x),
	\qquad & x\in\Om,
	\end{array} \right.
\ee
to be subsequently considered in a smoothly bounded domain $\Om\subset\R^n$,
this anisotropic correction term $\na \cdot (u\na v)$ in the second equation brings about two analytical obstacles:
It does not only destroy the Lypunov functional structure (\ref{001}) for the original Keller-Segel model;
beyond this, the principal part in the second subsystem of (\ref{0}) thereby loses its linear structure and even contains
a diffusion degeneracy that potentially might counteract higher-order regularity properties.\abs
In line with this, the corresponding analytical literature so far seems limited to the recent study \cite{taowin_JMPA}
in which (\ref{0}) is examined for $n\le 5$ and under the restriction that the difference $|D-d|$ of the linear parts
in both diffusion mechanisms is suitably small.
Within this framework, by means of a non-symmetrically coupled gradient estimate technique
along with a self-mapping argument and a refined H\"older regularity analysis
a result on global existence of bounded classical solutions is derived in \cite{taowin_JMPA}.
Although this markedly distinguishes (\ref{0}) from the classical Keller-Segel model (\ref{00}) with its well-known
core property to generate blow-up when $n\ge 2$,
it leaves open the question how far the introduction of anisotropic corrections in signal production prevents
chemotactic collapse also in the presence of strongly different diffusion rates;
in fact, this question seems of particular relevance in cases in which, as typically seen in nature, individuals
in the considered population move at velocities significantly smaller than signal molecules do.\abs
{\bf Main results.} \quad
The focus of the present work will accordingly be set on the development of a basic solution theory for (\ref{0})
in settings of arbitrary $D>0$ and $d>0$, where mainly for technical purposes we shall concentrate on the case when $n\le 5$.
In order to circumvent obstacles linked to the diffusion degeneracy in the second equation from (\ref{0}), our analysis in this regard
will be based on a variational approach concentration on the evolution of spatial integrals that exclusively
involve zero-order expressions.
In order to nevertheless achieve suitably far-reaching information, the core part of our considerations will trace
functionals of the form
\be{003}
	\io v^2 e^{(w+1)^\al} + b \io (w+1) e^{(w+1)^\al},
	\qquad w:=u+v,
	\qquad b>0,
\ee
along trajectories (see Lemma \ref{lem34}, Lemma \ref{lem36} and Lemma \ref{lem37}).\abs
A priori estimates accordingly implied for solutions to certain regularized variants of (\ref{0}) (see (\ref{0eps}))
will not only lead to a statement on global existence within
a fairly natural notion of weak solvability, but furthermore provide time-independent bounds for $u$ in an
Orlicz class smaller than $L^p(\Om)$ for each finite $p$;
in particular, our following main result rules out any $L^p$ blow-up phenomenon in (\ref{0}) both in finite or in infinite time,
contrary to the situation in the
multi-dimensional version of (\ref{00}) in which finite-time explosions actually occur in each of the spaces $L^p(\Om)$
with $p>\frac{n}{2}$ when $n\ge 2$ (\cite{herrero_velazquez}, \cite{win_JMPA}, \cite[Lemma 3.2]{BBTW}):
\begin{theo}\label{theo14}
  Let $n\le 5$ and $\Om\subset\R^n$ be a bounded domain with smooth boundary, let $D>0$ and $d>0$ be arbitrary,
  and let $K>0$.
  Then there exist $\al=\al(K)>0$ and $C(K)>0$ with the property that whenever
  \be{init}
	u_0\in W^{1,\infty}(\Om)
	\quad \mbox{and} \quad
	v_0\in W^{1,\infty}(\Om) \quad \mbox{are nonnegative}
  \ee
  and such that
  \be{K}
	\|u_0\|_{L^\infty(\Om)}
	+ \|v_0\|_{L^\infty(\Om)}
	\le K,
  \ee
  one can find nonnegative functions
  \be{reg}
	\lbal
	u\in L^2_{loc}([0,\infty);W^{1,2}(\Om))
	\qquad \mbox{and} \\[1mm]
	v\in L^2_{loc}([0,\infty);W^{1,2}(\Om))
	\ear
  \ee
  such that
  \be{14.1}
	\io e^{u^\al(\cdot,t)} \le C(K)
	\qquad \mbox{for a.e.~} t>0
  \ee
  and
  \be{14.2}
	\|v(\cdot,t)\|_{L^\infty(\Om)} \le C(K)
	\qquad \mbox{for a.e.~} t>0,
  \ee
  and that $(u,v)$ forms a global weak solution of (\ref{0}) in the sense that
  \be{wu}
	- \int_0^\infty \io u\vp_t - \io u_0\vp(\cdot,0)
	= - D \int_0^\infty \io \na u\cdot\na\vp
	+ \int_0^\infty \io u\na v\cdot\na\vp
  \ee
  and
  \be{wv}
	- \int_0^\infty \io v\vp_t - \io v_0\vp(\cdot,0)
	= - d \int_0^\infty \io\na v\cdot\na\vp
	- \int_0^\infty \io u\na v\cdot\na\vp
	- \int_0^\infty \io v\vp
	+ \int_0^\infty \io u\vp
  \ee
  hold for each $\vp\in C_0^\infty(\bom\times [0,\infty))$.
\end{theo}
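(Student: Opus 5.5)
The plan is to argue by regularization and compactness, with the key a priori bounds coming from the functional (\ref{003}). First I introduce a regularized problem (\ref{0eps}). In it the cross-diffusion terms are made nondegenerate and smooth, for instance by replacing $u\na v$ with $\frac{u}{1+\eps u}\na v$ in both equations and adding $\eps\Del$ where needed. Standard parabolic theory then gives global classical solutions $(\ueps,\veps)$. The structural observation is that $\weps:=\ueps+\veps$ satisfies, up to $\eps$-corrections,
\bas
	w_t = D\Del u + d\Del v - v + u,
\eas
in which the taxis terms cancel. Moreover, $v$ solves a parabolic equation in divergence form with diffusion matrix $(d+u)I \ge d I$. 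Hence a comparison or Moser-type argument, or more simply a direct $L^p$ testing with the absorption $-v$ and source $u$ controlled through $w$, should yield (\ref{14.2}) once $u$ is controlled in some $L^p$. I therefore aim to bound $u$ and $v$ simultaneously through $w$.

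The core step is to differentiate $y_\eps(t):=\io \veps^2 e^{(\weps+1)^\al} + b\io(\weps+1)e^{(\weps+1)^\al}$. Writing $\phi(s)=e^{(s+1)^\al}$, the second summand has derivative
\bas
	\io (\phi+ (w+1)\phi')\,(D\Del u+d\Del v-v+u).
\eas
Integrating by parts produces $-\io \psi''(w)\na w\cdot(D\na u+d\na v)$ with $\psi(s)=(s+1)\phi(s)$. When $D\ne d$ this is not sign-definite, and the error is of order $|D-d|\,\psi''\,|\na v|\,|\na w|$. This is exactly where the earlier smallness of $|D-d|$ was needed. To absorb it I use the first summand: testing the $v$-equation against $v\phi(w)$ yields a good term $-\io (d+u)\phi|\na v|^2$, in which the factor $u$ is crucial. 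Terms like $\psi''|\na w||\na v|$ are then split by Young's inequality into $\eta\psi''|\na w|^2 + C_\eta\psi''|\na v|^2$. The second piece must be dominated by $(d+u)\phi|\na v|^2$. Since $\psi''\sim \al (w+1)^{\al-1}\phi\cdot(\text{lower order})$, smallness of $\al$ (depending on $K$) and largeness of $b$ should make the coefficients compatible. Here the bound $v\le C$ from a bootstrap is used to keep $v^2\phi'$ terms comparable to $\phi$.

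I expect this gradient bookkeeping to be the main obstacle. The cross terms arising from $\na(v^2\phi(w))$, namely $v\phi'\na v\cdot\na w$ and $v^2\phi''|\na w|^2$, must be controlled by the good dissipations $\io\psi''|\na w|^2$ and $\io(d+u)\phi|\na v|^2$. To do this I will first choose $\al\in(0,1)$ small and then $b$ large. If a pointwise bound on $v$ is not yet available, I plan to run a continuity argument on $[0,T]$: assume $\|\veps\|_{L^\infty}\le 2M(K)$, derive the differential inequality $y'+y\le C$ (the $+y$ coming from the zero-order terms $-v+u$ together with a Poincar\'e-type estimate for $\phi$), and then close the assumption via the exponential integrability of $u$. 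For $n\le5$, this integrability with $u\in L^p$ for large $p$ feeds a Moser iteration for $v$. Alongside, I obtain time-local bounds on $\int_0^T\io|\na u|^2$ and $\int_0^T\io(1+u)|\na v|^2$.

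Finally, for the passage to the limit I combine the uniform bound $\io e^{\ueps^\al}\le C$ with the $L^\infty$ bound on $\veps$ and the $L^2_{loc}W^{1,2}$ bounds. Time-derivative estimates in $(W^{1,\infty})^\star$ then allow an Aubin--Lions extraction, giving $\ueps\to u$ and $\veps\to v$ a.e. and in $L^2_{loc}$, with $\na\ueps\wto\na u$ and $\na\veps\wto\na v$ weakly. The nonlinear term $\ueps\na\veps$ converges weakly in $L^1_{loc}$, because $\ueps$ converges strongly in $L^2_{loc}$ by uniform integrability from (\ref{14.1}) and $\na\veps$ converges weakly in $L^2$. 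This yields (\ref{wu}) and (\ref{wv}). Fatou's lemma and weak-$\star$ lower semicontinuity transfer (\ref{14.1}) and (\ref{14.2}) to a.e.\ $t>0$.
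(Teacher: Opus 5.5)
Your core mechanism matches the paper's. You couple $\io\veps^2e^{(\weps+1)^\al}$ to $b\io(\weps+1)e^{(\weps+1)^\al}$, so that the dissipation $\io(d+u)e^{(w+1)^\al}|\na v|^2$ absorbs the $|d-D|$ cross term, with $\al$ small and $b$ large. Your regularization and compactness steps are also fine. The gap is in how you obtain the $\eps$-independent bound on $\|\veps\|_{L^\infty}$, which is circular.

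Differentiating $\io\veps^2\chi(\weps)$ produces $\veps\chi'(\weps)\na\veps\cdot\na\weps$, $\veps^2\chi''(\weps)\na\veps\cdot\na\weps$ and $(d-D)\veps\chi'(\weps)|\na\veps|^2$. These can only be controlled once $\veps\le M$ is known. The last one forces $\al$ to be of order at most $\frac{1}{|d-D|M}$, so $\al$, $b$ and the final bound on $\io e^{\ueps^\al}$ all depend on $M$. In your continuity argument, the $L^p$ bound for $u$ extracted from $\io e^{\ueps^{\al(M)}}\le C(M)$, and hence the Moser bound for $v$ it feeds, is some function $F(M)$, in general increasing. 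Nothing guarantees $F(M)<2M$ for any admissible $M$, so the bootstrap does not close.

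The missing idea is a preliminary estimate that needs no bound on $v$, which the paper gets from the cubic functional $\io\frac{(\weps+1)^3}{1+\eps\weps}+b\io(\veps+1)^3$ (Lemma \ref{lem4}). Testing the $v$-equation by $(\veps+1)^2$ involves no $w$-dependent weight, hence no cross terms. It yields the dissipation $2\io(d+\frac{\ueps}{1+\eps\ueps})(\veps+1)|\na\veps|^2\ge 2\min\{d,1\}\io\frac{\weps+1}{1+\eps\weps}|\na\veps|^2$ without any information on $\|\veps\|_{L^\infty}$. This matches the error $\frac{(d-D)^2}{2D}\io\rho''(\weps)|\na\veps|^2\le\frac{4(d-D)^2}{D}\io\frac{\weps+1}{1+\eps\weps}|\na\veps|^2$ from the $w$-part, for arbitrary $|d-D|$. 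The cubic source terms are absorbed through the Ehrling-type inequality of Lemma \ref{lem1} and mass control.

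This gives $\sup_t\io(\frac{\ueps}{1+\eps\ueps})^3\le\Gamma_1(K)$. Since $3>\frac n2$ exactly when $n\le 5$, Moser iteration then gives $\|\veps\|_{L^\infty}\le M(K)$. It also gives time-uniform bounds on $\int_t^{t+1}\io(|\na\weps|^2+|\na\veps|^2)$, which your core step needs too. The exponential functional does not satisfy $y'+y\le C$, only $y'+y\le h(t)$. Here $h$ contains $\io|\na\weps|^2$ and $\io|\na\veps|^2$, coming from additive constants such as $e^{1/\al}$ in the bounds for $\rho''$ and $\chi''$.

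Moreover, the source $+u$ does not help produce the $+y$. It generates $\al\io\frac{(\weps+1)^{\al+1}}{1+\eps\weps}e^{(\weps+1)^\al}$, which dominates $y$. This term must be absorbed into the dissipation $\al^2\io\frac{(\weps+1)^{2\al-1}}{1+\eps\weps}e^{(\weps+1)^\al}|\na\weps|^2$. That requires an interpolation inequality whose constant is $O(\al^2)$ uniformly for small $\al$ (Lemma \ref{lem32}, which needs $\al<\frac 2n$). A Poincar\'e-type estimate with unspecified dependence on $\al$ would not close the argument.
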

\mysection{Preliminaries}
\subsection{Two families of interpolation inequalities}
A key role in our analysis will be played by two functional inequalities which can be viewed as far relatives of Ehrling's inequality,
and which will be decisive in appropriately estimating zero-order expressions related to the source term $+u$ appearing
in the second equation in (\ref{0}).
In view of our ambition to subsequently consider solutions to the approximate variants of (\ref{0}) introduced in (\ref{0eps})
below, these inequalities will need to suitably cope with the appearance of a regularization parameter $\eps$ therein, and with
consequences thereof on a reduced strength of the diffusion mechanism determining the evolution of the second solution component.\abs
The first of these inequalities will be used in revealing quasi-energy properties enjoyed by certain combinations of functionals
that exhibit essentially cubic growth with respect to both solution components (see Lemma \ref{lem4}):
\begin{lem}\label{lem1}
  Let $n\ge 1$ and $\Om\subset\R^n$ be a bounded domain with smooth boundary.
  Then for each $\eta>0$ one can find $\Lam_1(\eta)>0$ such that whenever $\vp\in C^1(\bom)$ is nonnegative,
  \be{1.1}
	\io \frac{(\vp+1)^3}{1+\eps\vp}
	\le \eta \io \frac{\vp+1}{1+\eps\vp} |\na\vp|^2
	+ \Lam_1(\eta) \cdot \bigg\{ \io (\vp+1) \bigg\}^3
	\qquad \mbox{for all } \eps\in (0,1).
  \ee
\end{lem}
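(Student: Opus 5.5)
The plan is to reduce \eqref{1.1} to a standard Gagliardo--Nirenberg/Ehrling-type inequality for a single auxiliary function whose $L^2$ norm reproduces the left-hand side. Concretely, given nonnegative $\vp\in C^1(\bom)$ and $\eps\in(0,1)$, I would set
\bas
	z:=\frac{(\vp+1)^{\frac{3}{2}}}{(1+\eps\vp)^{\frac{1}{2}}},
\eas
so that $z\in C^1(\bom)$ is positive and $\io z^2=\io \frac{(\vp+1)^3}{1+\eps\vp}$.

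The first step is to control $\na z$ by the dissipation-type term on the right of \eqref{1.1}, uniformly in $\eps$. A direct computation gives
\bas
	\na z=\frac{(\vp+1)^{\frac{1}{2}}}{(1+\eps\vp)^{\frac{3}{2}}}\cdot\Big\{\frac{3}{2}(1+\eps\vp)-\frac{\eps}{2}(\vp+1)\Big\}\na\vp .
\eas
The bracket equals $\frac{3}{2}-\frac{\eps}{2}+\eps\vp$. Hence it lies between $1$ and $\frac{3}{2}(1+\eps\vp)$ for $\eps\in(0,1)$ and $\vp\ge 0$. This yields
\bas
	|\na z|^2\le\frac{9}{4}\cdot\frac{\vp+1}{1+\eps\vp}|\na\vp|^2 .
\eas

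The second step is to control a low-order norm of $z$ by $\io(\vp+1)$. Since $z^{\frac{2}{3}}=\frac{\vp+1}{(1+\eps\vp)^{1/3}}\le\vp+1$, we get
\bas
	\|z\|_{L^{\frac{2}{3}}(\Om)}^2=\Big\{\io z^{\frac{2}{3}}\Big\}^3\le\Big\{\io(\vp+1)\Big\}^3 .
\eas

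The third step, which is the only non-elementary ingredient, is a Gagliardo--Nirenberg inequality involving the sub-unit exponent $\frac{2}{3}$. It states that there are $C>0$ and $a\in(0,1)$, with $a$ determined by $\frac{1}{2}=a(\frac{1}{2}-\frac{1}{n})+(1-a)\frac{3}{2}$, such that
\bas
	\|z\|_{L^2(\Om)}\le C\|\na z\|_{L^2(\Om)}^a\|z\|_{L^{\frac{2}{3}}(\Om)}^{1-a}+C\|z\|_{L^{\frac{2}{3}}(\Om)}
\eas
for all $z\in W^{1,2}(\Om)$. This version with Lebesgue exponents below $1$ is known to hold in bounded smooth domains for every $n\ge1$. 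Since $2<\frac{2n}{n-2}$ whenever $n\ge3$, the exponent $a$ indeed lies in $(0,1)$.

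Squaring this inequality and applying Young's inequality with the exponent pair $(\frac{1}{a},\frac{1}{1-a})$ gives, for any $\eta'>0$,
\bas
	\io z^2\le\eta'\io|\na z|^2+C(\eta')\|z\|_{L^{\frac{2}{3}}(\Om)}^2 .
\eas
Choosing $\eta'=\frac{4\eta}{9}$ and inserting the bounds from the first two steps yields \eqref{1.1} with $\Lam_1(\eta):=C(\frac{4\eta}{9})$. This constant is independent of $\eps$ and of $\vp$, as required.

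I expect the main point to watch to be exactly the uniformity in $\eps$ in the gradient estimate of the first step. Choosing $z$ with the precise weights $(\vp+1)^{3/2}(1+\eps\vp)^{-1/2}$ makes this work, because the derivative of the denominator only reduces the bracket by at most $\frac{\eps}{2}(\vp+1)$ and never turns it negative. The appeal to Gagliardo--Nirenberg with $q=\frac{2}{3}<1$ is standard but should be cited explicitly.
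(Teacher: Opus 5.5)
Your proposal is correct and matches the paper's proof essentially verbatim. The paper uses the same auxiliary function $(\vp+1)^{3/2}(1+\eps\vp)^{-1/2}$, the same derivative bound with constant $\frac{3}{2}$, the same $L^{2/3}$ estimate, and the same choice $\frac{4\eta}{9}$. The only difference is that the paper obtains $\|\psi\|_{L^2(\Om)}^2\le\frac{4\eta}{9}\|\na\psi\|_{L^2(\Om)}^2+c_1\|\psi\|_{L^{2/3}(\Om)}^2$ directly from the compactness of $W^{1,2}(\Om)\hra L^2(\Om)$, rather than from a Gagliardo--Nirenberg inequality with sub-unit exponent combined with Young's inequality.
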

\proof
  An interpolation relying on the compactness of the embedding $W^{1,2}(\Om) \hra L^2(\Om)$ yields $c_1=c_1(\eta)>0$ such that
  \be{1.2}
	\|\psi\|_{L^2(\Om)}^2 \le \frac{4\eta}{9} \|\na\psi\|_{L^2(\Om)}^2
	+ c_1 \|\psi\|_{L^\frac{2}{3}(\Om)}^2
	\qquad \mbox{for all } \psi\in C^1(\bom).
  \ee
  Noting that for
  \bas
	\rho_\eps(\xi):=\frac{\sqrt{\xi+1}^3}{\sqrt{1+\eps\xi}},
	\qquad \xi\ge 0, \ \eps\in (0,1),
  \eas
  we have
  \bas
	\rho_\eps'(\xi)
	&=& \frac{3}{2} \cdot \frac{\sqrt{\xi+1}}{\sqrt{1+\eps\xi}}
	- \frac{\eps}{2} \cdot \frac{\sqrt{\xi+1}^3}{\sqrt{1+\eps\xi}^3} \\
	&=& \frac{1}{2} \cdot \sqrt{\frac{\xi+1}{1+\eps\xi}} \cdot \frac{3-\eps+2\eps\xi}{1+\eps\xi}
	\qquad \mbox{for all $\xi\ge 0$ and } \eps\in (0,1)
  \eas
  and hence
  \bas
	0\le \rho_\eps'(\xi)
	\le \frac{1}{2} \cdot \sqrt{\frac{\xi+1}{1+\eps\xi}} \cdot \frac{3+3\eps\xi}{1+\eps\xi}
	= \frac{3}{2} \cdot \sqrt{\frac{\xi+1}{1+\eps\xi}}
	\qquad \mbox{for all $\xi\ge 0$ and } \eps\in (0,1),
  \eas
  for fixed nonnegative $\vp\in C^1(\bom)$ we obtain from (\ref{1.2}) that
  \bas
	\io \frac{(\vp+1)^3}{1+\eps\vp}
	&=& \big\| \rho_\eps(\vp)\big\|_{L^2(\Om)}^2 \\
	&\le& \frac{4\eta}{9} \big\| \na\rho_\eps(\vp)\big\|_{L^2(\Om)}^2
	+ c_1 \big\| \rho_\eps(\vp)\big\|_{L^\frac{2}{3}(\Om)}^2 \\
	&=& \frac{4\eta}{9} \cdot \io \rho_\eps'^2(\vp) |\na\vp|^2
	+ c_1 \cdot \bigg\{ \io \rho_\eps^\frac{2}{3}(\vp)\bigg\}^3 \\
	&\le& \eta \io \frac{\vp+1}{1+\eps\vp} |\na\vp|^2
	+ c_1 \cdot \bigg\{ \io \rho_\eps^\frac{2}{3}(\vp)\bigg\}^3
	\qquad \mbox{for all } \eps\in (0,1).
  \eas
  Since
  \bas
	c_1 \cdot \bigg\{ \io \rho_\eps^\frac{2}{3}(\vp)\bigg\}^3
	= c_1 \cdot \bigg\{ \io \frac{\vp+1}{(1+\eps\vp)^\frac{1}{3}} \bigg\}^3
	\le c_1 \cdot \bigg\{ \io (\vp+1) \bigg\}^3
	\qquad \mbox{for all } \eps\in (0,1)
  \eas
  by nonnegativity of $\eps\vp$ for any such $\eps$, this yields (\ref{1.1}) with $\Lam_1(\eta):=c_1$.
\qed
Establishing a second and more subtle relation will require the following statement on zero-order interpolation as a preliminary.
\begin{lem}\label{lem2}
  If $n\ge 1$ and $\Om\subset\R^n$ is a bounded domain with smooth boundary,
  and if $\al\in (0,1)$, $\mu>0$ and $\eta>0$, then there exists $\Lam_2(\eta,\al,\mu)>0$ with the property that
  any nonnegative $\vp\in C^0(\bom)$ fulfilling
  \be{2.1}
	\io \vp \le \mu
  \ee
  satisfies
  \be{2.2}
	\bigg\{ \io \frac{(\vp+1)^\frac{\al+1}{2}}{(1+\eps\vp)^\frac{1}{2}} \cdot e^{\frac{1}{2} (\vp+1)^\al} \bigg\}^2
	\le \eta \io \frac{(\vp+1)^{\al+1}}{1+\eps\vp} \cdot e^{(\vp+1)^\al}
	+ \Lam_2(\eta,\al,\mu)
	\qquad \mbox{for all } \eps\in (0,1).
  \ee
\end{lem}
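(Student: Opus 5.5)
The strategy is to split $\Om$ according to the size of $\vp$ and apply Cauchy--Schwarz only on the region where $\vp$ is large. There the small measure of that region, which comes from \eqref{2.1} via Chebyshev, provides the factor $\eta$. On the region where $\vp$ is small, the integrand is bounded pointwise by a constant depending only on the threshold.

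Abbreviate
\bas
	F(\xi):=\frac{(\xi+1)^{\al+1}}{1+\eps\xi}\, e^{(\xi+1)^\al},
	\qquad \xi\ge 0,
\eas
so that the integrand on the left of \eqref{2.2} is exactly $\sqrt{F(\vp)}$. Fix a threshold $M>0$, to be chosen later, and write $\Om_1:=\{\vp\le M\}$ and $\Om_2:=\{\vp>M\}$.

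On $\Om_1$ we use $1+\eps\vp\ge 1$ to get the pointwise bound
\bas
	\sqrt{F(\vp)}\le (M+1)^{\frac{\al+1}{2}} e^{\frac12 (M+1)^\al} =: c_M .
\eas
Hence the contribution of $\Om_1$ is at most $c_M|\Om|$, uniformly in $\eps\in(0,1)$.

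On $\Om_2$ we apply Cauchy--Schwarz:
\bas
	\bigg\{\int_{\Om_2}\sqrt{F(\vp)}\bigg\}^2\le |\Om_2|\int_{\Om_2}F(\vp).
\eas
By Chebyshev's inequality and \eqref{2.1}, $|\Om_2|\le \mu/M$. Using $(a+b)^2\le 2a^2+2b^2$, we therefore obtain
\bas
	\bigg\{\io\sqrt{F(\vp)}\bigg\}^2\le \frac{2\mu}{M}\io F(\vp)+2c_M^2|\Om|^2 .
\eas
Choosing $M:=2\mu/\eta$ gives \eqref{2.2} with
\bas
	\Lam_2(\eta,\al,\mu):=2c_M^2|\Om|^2 .
\eas
This constant is independent of $\eps$ and of $\vp$, as required.

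The only points to verify are that the pointwise bound on $\Om_1$ is uniform in $\eps$, which holds because $1+\eps\vp\ge 1$, and that the factor $\eta$ is produced entirely by the smallness of $|\Om_2|$. With these checked, no step presents a genuine obstacle: neither the restriction $\al\in(0,1)$ nor any smoothness of $\vp$ beyond continuity is needed, only nonnegativity together with the mass bound \eqref{2.1}.
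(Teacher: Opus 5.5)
Your proof is correct, and it reaches the inequality by a genuinely different and more elementary route than the paper.

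The paper also splits $\Omega$ at a level $a$, but chooses the level adaptively: $a_\varepsilon=\{2\ln_+\sqrt{4I_\varepsilon(\varphi)/\eta}\}^{1/\alpha}$, where $I_\varepsilon(\varphi)=\int_\Omega F(\varphi)$ is the right-hand integral itself.

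\begin{itemize}
\item On $\{\varphi\ge a\}$ it uses the pointwise bound $\sqrt{F(\varphi)}\le e^{-\frac12(a+1)^\alpha}F(\varphi)$, so no measure estimate is needed there.
\item On $\{\varphi<a\}$ it writes $\sqrt{F(\varphi)}\le(\varphi+N)\rho(\varphi)$, with the auxiliary function $\rho(\xi)=(\xi+N)^{\frac{\alpha-1}{2}}e^{\frac12(\xi+1)^\alpha}$ shown to be nondecreasing for suitable $N=N(\alpha)$. The mass bound then gives $(\mu+N|\Omega|)\rho(a)$.
\end{itemize}

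The paper's smallness therefore comes from the decaying factor $(a+N)^{\frac{\alpha-1}{2}}$. This is where $\alpha<1$ is really used, together with $(a+1)^\alpha\le a^\alpha+1$. Balancing the two pieces gives $\sqrt{\eta I_\varepsilon}$ when $I_\varepsilon$ is large, and a separate case handles $a_\varepsilon\le a_0$.

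You instead fix the level $M=2\mu/\eta$ and obtain the factor $\eta$ from the small measure of $\{\varphi>M\}$, via Chebyshev and Cauchy--Schwarz. What this buys:
\begin{itemize}
\item It removes the auxiliary function and the case distinction.
\item It shows that the hypothesis $\alpha<1$ is unnecessary for this particular lemma.
\item It gives a constant of size roughly $e^{(2\mu/\eta+1)^\alpha}$, whereas the paper's is roughly $e^{a_0^\alpha}$ with $a_0\sim\eta^{-2/(1-\alpha)}$.
\end{itemize}

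The lemma is only applied with one fixed $\eta$, in the proof of Lemma~\ref{lem32}, so either version serves equally well downstream.
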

\proof
  We let $N=N(\al)\ge 1$ be such that
  \bas
	\al N^\al \ge 1-\al,
  \eas
  and note that then for
  \be{2.22}
	\rho(\xi):=(\xi+N)^\frac{\al-1}{2} e^{\frac{1}{2}(\xi+1)^\al},
	\qquad \xi\ge 0,
  \ee
  we have
  \bea{2.23}
	\rho'(\xi)
	&=& \frac{\al}{2} (\xi+N)^\frac{\al-1}{2} (\xi+1)^{\al-1} e^{\frac{1}{2}(\xi+1)^\al}
	- \frac{1-\al}{2} (\xi+N)^\frac{\al-3}{2} e^{\frac{1}{2}(\xi+1)^\al} \nn\\
	&=& \frac{1}{2} (\xi+N)^\frac{\al-3}{2} e^{\frac{1}{2}(\xi+1)^\al} 	
		\cdot \big\{ \al(\xi+N)(\xi+1)^{\al-1} - (1-\al) \big\} \nn\\
	&=& \frac{1}{2} (\xi+N)^\frac{\al-3}{2} e^{\frac{1}{2}(\xi+1)^\al} 	
		\cdot \Big\{ \al(\xi+N)^\al \cdot \Big(\frac{\xi+N}{\xi+1}\Big)^{1-\al} - (1-\al)\Big\} \nn\\
	&\ge& \frac{1}{2} (\xi+N)^\frac{\al-3}{2} e^{\frac{1}{2}(\xi+1)^\al} 	
		\cdot \big\{ \al N^\al - (1-\al)\big\}
	\ge 0
	\qquad \mbox{for all } \xi\ge 0,
  \eea
  because $\al<1$.
  For $0\le \vp \in C^0(\bom)$ fulfilling (\ref{2.1}), and for arbitrary $a>0$, splitting
  \be{2.3}
	\io \frac{(\vp+1)^\frac{\al+1}{2}}{(1+\eps\vp)^\frac{1}{2}} \cdot e^{\frac{1}{2} (\vp+1)^\al}
	= \int_{\{\vp<a\}} \frac{(\vp+1)^\frac{\al+1}{2}}{(1+\eps\vp)^\frac{1}{2}} \cdot e^{\frac{1}{2} (\vp+1)^\al}
	+ \int_{\{\vp\ge a\}} \frac{(\vp+1)^\frac{\al+1}{2}}{(1+\eps\vp)^\frac{1}{2}} \cdot e^{\frac{1}{2} (\vp+1)^\al}
	\qquad \eps\in (0,1),
  \ee
  we can therefore estimate
  \bea{2.4}
	\int_{\{\vp<a\}} \frac{(\vp+1)^\frac{\al+1}{2}}{(1+\eps\vp)^\frac{1}{2}} \cdot e^{\frac{1}{2} (\vp+1)^\al}
	&\le& \int_{\{\vp<a\}} (\vp+N)^\frac{\al+1}{2} e^{\frac{1}{2}(\vp+1)^\al} \nn\\
	&=& \int_{\{\vp<a\}} (\vp+N) \rho(\vp) \nn\\
	&\le& \rho(a) \int_{\{\vp<a\}} (\vp+N) \nn\\
	&\le& (\mu+N|\Om|) \rho(a)
	\qquad \mbox{for all } \eps\in (0,1)
  \eea
  according to (\ref{2.1}).
  Apart from that, simply using that
  \bas
	\frac{(1+\eps\vp)^\frac{1}{2}}{(\vp+1)^\frac{\al+1}{2}}
	\le \frac{(1+\eps\vp)^\frac{1}{2}}{(\vp+1)^\frac{1}{2}}
	\le 1
  \eas
  for $\eps\in (0,1)$, we see on writing
  $I_\eps(\vp):=\io \frac{(\vp+1)^{\al+1}}{1+\eps\vp} \cdot e^{(\vp+1)^\al}$, $\eps\in (0,1)$, that
  \bas
	\int_{\{\vp\ge a\}} \frac{(\vp+1)^\frac{\al+1}{2}}{(1+\eps\vp)^\frac{1}{2}} \cdot e^{\frac{1}{2} (\vp+1)^\al}
	&=& \int_{\{\vp\ge a\}} \Big\{ \frac{(\vp+1)^{\al+1}}{1+\eps\vp} \cdot e^{(\vp+1)^\al} \Big\}
	\cdot \frac{(1+\eps\vp)^\frac{1}{2}}{(\vp+1)^\frac{\al+1}{2}} \cdot e^{-\frac{1}{2}(\vp+1)^\al} \\
	&\le& \int_{\{\vp\ge a\}} \Big\{ \frac{(\vp+1)^{\al+1}}{1+\eps\vp} \cdot e^{(\vp+1)^\al} \Big\}
	\cdot e^{-\frac{1}{2}(\vp+1)^\al} \\[1mm]
	&\le& e^{-\frac{1}{2}(a+1)^\al} I_\eps(\vp)
	\qquad \mbox{for all } \eps\in (0,1),
  \eas
  whence by (\ref{2.3}) and (\ref{2.4}),
  \be{2.5}
	\io \frac{(\vp+1)^\frac{\al+1}{2}}{(1+\eps\vp)^\frac{1}{2}} \cdot e^{\frac{1}{2} (\vp+1)^\al}
	\le c_1 \rho(a) + e^{-\frac{1}{2}(a+1)^\al} I_\eps(\vp)
	\qquad \mbox{for all $\eps\in (0,1)$ and } a>0
  \ee
  with $c_1\equiv c_1(\al,\mu):=\mu+N|\Om|$.\abs
  We now fix $\eta>0$ and let
  \be{2.6}
	a_\eps \equiv a_\eps(\eta,\vp):=
	\bigg\{ 2 \ln_+ \sqrt{\frac{4 I_\eps(\vp)}{\eta}} \bigg\}^\frac{1}{\al},
	\qquad
	\eps\in (0,1),
  \ee
  where $\ln_+ \xi:=\max\{0 \, , \, \ln \xi\}$ for $\xi>0$.
  Then in the case when $\eps\in (0,1)$ is such that
  \be{2.66}
	a_\eps \le a_0\equiv a_0(\eta,\al,\mu):=
	\Big(\frac{4c_1\sqrt{e}}{\eta}\Big)^\frac{2}{1-\al},
  \ee
  we evidently have $\ln \frac{4 I_\eps(\vp)}{\eta} < a_0^\al$,
  that is,
  \bas
	I_\eps(\vp) \le \frac{\eta}{4} e^{a_0^\al},
  \eas
  so that (\ref{2.5}) together with (\ref{2.23}) guarantees that
  \be{2.7}
	\io \frac{(\vp+1)^\frac{\al+1}{2}}{(1+\eps\vp)^\frac{1}{2}} \cdot e^{\frac{1}{2} (\vp+1)^\al}
	\le c_2\equiv c_2(\eta,\al,\mu):= c_1 \rho(a_0)
	+ e^{-\frac{1}{2}(a_0+1)^\al} \cdot \frac{\eta}{4} e^{a_0^\al}.
  \ee
  If, conversely,
  \be{2.8}
	a_\eps>a_0,
  \ee
  then by (\ref{2.6}),
  \be{2.81}
	e^{-\frac{1}{2}(a_\eps+1)^\al} I_\eps(\vp)
	\le e^{-\frac{1}{2}a_\eps^\al} I_\eps(\vp)
	= e^{-\ln \sqrt{\frac{4 I_\eps(\vp)}{\eta}}} I_\eps(\vp)
	= \frac{\sqrt{\eta}}{2} \cdot \sqrt{I_\eps(\vp)},
  \ee
  while according to our definition of $a_0$ in (\ref{2.66}), and again since $\al< 1$,
  we may estimate $(a_\eps+1)^\al \le a_\eps^\al + 1$ to see that
  \bea{2.82}
	c_1 \rho(a_\eps)
	&=& c_1 (a_\eps+N)^\frac{\al-1}{2} e^{\frac{1}{2}(a_\eps+1)^\al} \nn\\
	&\le& c_1 \sqrt{e} a_0^\frac{\al-1}{2} e^{\frac{1}{2} a_\eps^\al} \nn\\
	&=& c_1\sqrt{e} a_0^\frac{\al-1}{2} \cdot \sqrt{\frac{4I_\eps(\vp)}{\eta}} \nn\\
	&=& \frac{\sqrt{\eta}}{2} \cdot \sqrt{I_\eps(\vp)}.
  \eea
  In view of (\ref{2.81}) and (\ref{2.82}), an application of (\ref{2.5}) shows that whenever (\ref{2.8}) holds, we have
  \bas
	\bigg\{ \io\frac{(\vp+1)^\frac{\al+1}{2}}{(1+\eps\vp)^\frac{1}{2}} \cdot e^{\frac{1}{2} (\vp+1)^\al} \bigg\}^2
	\le \Big\{ \frac{\sqrt{\eta}}{2} \cdot \sqrt{I_\eps(\vp)} + \frac{\sqrt{\eta}}{2} \cdot \sqrt{I_\eps(\vp)} \Big\}^2
	= \eta I_\eps(\vp),
  \eas
  which in conjunction with (\ref{2.7}) shows that (\ref{2.2}) is valid for any choice of $\eps\in (0,1)$ if we let
  $\Lam_2(\eta,\al,\mu):=c_2^2$.
\qed
A second preparation consists in the following elementary observation.
\begin{lem}\label{lem31}
  Let $\al>0$, and for $\eps\in (0,1)$ let
  \be{31.1}
	\rho_\eps(\xi):=\frac{(\xi+1)^\frac{\al+1}{2}}{(1+\eps\xi)^\frac{1}{2}} \cdot e^{\frac{1}{2}(\xi+1)^\al},
	\qquad \xi\ge 0.
  \ee
  Then
  \be{31.2}
	0 \le \rho_\eps'(\xi)
	\le \al \cdot \frac{(\xi+1)^\frac{3\al-1}{2}}{(1+\eps\xi)^\frac{1}{2}} \cdot e^{\frac{1}{2}(\xi+1)^\al}
	+ \frac{(\al+1)^\frac{3}{2}}{\sqrt{\al}} \cdot e^\frac{\al+1}{2\al}
	\qquad \mbox{for all } \xi\ge 0.
  \ee
\end{lem}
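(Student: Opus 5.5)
Differentiating $\rho_\eps$ from (\ref{31.1}) directly gives
\[
	\rho_\eps'(\xi)
	= \frac{(\xi+1)^\frac{\al-1}{2}}{(1+\eps\xi)^\frac{1}{2}}\, e^{\frac{1}{2}(\xi+1)^\al}
	\cdot \Big\{ \frac{\al+1}{2} - \frac{\eps(\xi+1)}{2(1+\eps\xi)} + \frac{\al}{2}(\xi+1)^\al \Big\},
\]
and I would read off both inequalities in (\ref{31.2}) from this formula.

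For the lower bound, I would use that $\eps\in(0,1)$ implies $\eps(\xi+1)=\eps\xi+\eps\le 1+\eps\xi$. Hence the negative term in the braces is at least $-\frac{1}{2}$, so the braces are bounded below by $\frac{\al}{2}+\frac{\al}{2}(\xi+1)^\al>0$. This gives $\rho_\eps'\ge 0$.

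For the upper bound, I would drop the negative term and split the braces into two pieces.

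The piece $\frac{\al}{2}(\xi+1)^\al$ contributes
\[
	\frac{\al}{2}\cdot\frac{(\xi+1)^\frac{3\al-1}{2}}{(1+\eps\xi)^\frac{1}{2}}\, e^{\frac{1}{2}(\xi+1)^\al}.
\]
This is half of the first term on the right of (\ref{31.2}), so it leaves slack.

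The piece $\frac{\al+1}{2}$ contributes
\[
	\frac{\al+1}{2}\cdot\frac{(\xi+1)^\frac{\al-1}{2}}{(1+\eps\xi)^\frac{1}{2}}\, e^{\frac{1}{2}(\xi+1)^\al},
\]
which is where the work lies. I would split according to the size of $s:=(\xi+1)^\al\ge 1$, with threshold $s_0:=\frac{\al+1}{\al}$.

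If $s\ge s_0$, then $(\xi+1)^\frac{\al-1}{2}\le \frac{\al}{\al+1}(\xi+1)^\frac{3\al-1}{2}$. So this piece is again at most $\frac{\al}{2}\frac{(\xi+1)^{\frac{3\al-1}{2}}}{(1+\eps\xi)^{1/2}}e^{\frac12(\xi+1)^\al}$. Together with the first piece, the total is exactly $\al\cdot\frac{(\xi+1)^{\frac{3\al-1}{2}}}{(1+\eps\xi)^{1/2}}e^{\frac12(\xi+1)^\al}$, which is the first term in (\ref{31.2}).

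If $s<s_0$, I would bound this piece by a constant. Using $(1+\eps\xi)^{-1/2}\le 1$ and $e^{\frac{1}{2}s}\le e^{\frac{\al+1}{2\al}}$, it remains to bound $(\xi+1)^\frac{\al-1}{2}$.
\begin{itemize}
\item When $\al\le 1$, we have $(\xi+1)^\frac{\al-1}{2}\le 1$.
\item When $\al>1$, we have $(\xi+1)^\frac{\al-1}{2}=s^\frac{\al-1}{2\al}<s_0^{\frac12}=\big(\frac{\al+1}{\al}\big)^{1/2}$.
\end{itemize}
In the second subcase the piece is bounded by $\frac{\al+1}{2}\sqrt{\frac{\al+1}{\al}}\,e^{\frac{\al+1}{2\al}}$. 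In the first subcase it is bounded by $\frac{\al+1}{2}e^{\frac{\al+1}{2\al}}$, which is no larger since $\frac{\al+1}{\al}>1$. Both are at most $\frac{(\al+1)^{3/2}}{\sqrt\al}e^{\frac{\al+1}{2\al}}$, the constant in (\ref{31.2}).

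The main obstacle is this choice of threshold $s_0$, which has to be fitted to the stated constant $\frac{(\al+1)^{3/2}}{\sqrt{\al}}e^{\frac{\al+1}{2\al}}$. The choice $s_0=\frac{\al+1}{\al}$ makes both regimes match, with a factor $\frac{1}{2}$ to spare in the constant.
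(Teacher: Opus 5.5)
Your proof is correct and follows the paper's argument: you use the same derivative formula, the same lower bound via $\eps(\xi+1)\le 1+\eps\xi$, and the same threshold $(\xi+1)^\al=\frac{\al+1}{\al}$. The only difference is in the regime $(\xi+1)^\al<\frac{\al+1}{\al}$, where the paper bounds all of $\rho_\eps'$ by the constant, using $(\xi+1)^{\frac{\al-1}{2}}\le(\xi+1)^{\frac{\al}{2}}$; this makes your case split $\al\le 1$ versus $\al>1$ unnecessary, whereas you keep the $\frac{\al}{2}$-piece in the first term and obtain the constant with a factor $\frac12$ to spare.
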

\proof
  We compute
  \be{31.3}
	\rho_\eps'(\xi)
	= \frac{\al}{2} \cdot \frac{(\xi+1)^\frac{3\al-1}{2}}{(1+\eps\xi)^\frac{1}{2}} \cdot e^{\frac{1}{2}(\xi+1)^\al}
	+ \frac{\al+1}{2} \cdot \frac{(\xi+1)^\frac{\al-1}{2}}{(1+\eps\xi)^\frac{1}{2}} \cdot e^{\frac{1}{2}(\xi+1)^\al}
	- \frac{\eps}{2} \cdot \frac{(\xi+1)^\frac{\al+1}{2}}{(1+\eps\xi)^\frac{3}{2}} \cdot e^{\frac{1}{2}(\xi+1)^\al},
	\qquad \xi\ge 0,
  \ee
  and thus obtain on dropping the nonnegative first summand here that
  \bas
	\rho_\eps'(\xi)
	&\ge& \frac{(\xi+1)^\frac{\al-1}{2}}{(1+\eps\xi)^\frac{1}{2}} \cdot e^{\frac{1}{2}(\xi+1)^\al} \cdot \Big\{
	\frac{\al+1}{2} - \frac{\eps}{2} \cdot \frac{\xi+1}{1+\eps\xi} \Big\} \\
	&\ge& \frac{(\xi+1)^\frac{\al-1}{2}}{(1+\eps\xi)^\frac{1}{2}} \cdot e^{\frac{1}{2}(\xi+1)^\al} \cdot \Big\{
	\frac{\al+1}{2} - \frac{\eps}{2} \cdot \frac{\xi+\frac{1}{\eps}}{1+\eps\xi} \Big\} \\
	&=& \frac{(\xi+1)^\frac{\al-1}{2}}{(1+\eps\xi)^\frac{1}{2}} \cdot e^{\frac{1}{2}(\xi+1)^\al} \cdot \frac{\al}{2}
	\qquad \mbox{for all } \xi\ge 0,
  \eas
  which particularly yields the left inequality in (\ref{31.2}).
  To verify the right one, we first observe that if $\xi\ge 0$ is such that
  $(\xi+1)^\al \ge \frac{\al+1}{\al}$, then trivially estimating the rightmost summand in (\ref{31.3}) we see that
  \bas
	\rho_\eps'(\xi)
	&\le& \frac{(\xi+1)^\frac{3\al-1}{2}}{(1+\eps\xi)^\frac{1}{2}} \cdot e^{\frac{1}{2}(\xi+1)^\al} \cdot \Big\{
	\frac{\al}{2} + \frac{\al+1}{2} \cdot \frac{1}{(\xi+1)^\al} \Big\} \\
	&\le& \frac{(\xi+1)^\frac{3\al-1}{2}}{(1+\eps\xi)^\frac{1}{2}} \cdot e^{\frac{1}{2}(\xi+1)^\al} \cdot \Big\{
	\frac{\al}{2} + \frac{\al+1}{2} \cdot \frac{\al}{\al+1} \Big\} \\
	&=& \frac{(\xi+1)^\frac{3\al-1}{2}}{(1+\eps\xi)^\frac{1}{2}} \cdot e^{\frac{1}{2}(\xi+1)^\al} \cdot \al.
  \eas
  Since, on the other hand, for any $\xi\ge 0$ fulfilling
  $(\xi+1)^\al < \frac{\al+1}{\al}$ we have
  \bas
	\rho_\eps'(\xi)
	&\le& \frac{\al}{2} \cdot (\xi+1)^\frac{3\al}{2} \cdot e^{\frac{1}{2}(\xi+1)^\al}
	+ \frac{\al+1}{2} \cdot (\xi+1)^\frac{\al}{2} \cdot e^{\frac{1}{2}(\xi+1)^\al} \\
	&\le& \frac{\al}{2} \cdot \Big(\frac{\al+1}{\al}\Big)^\frac{3}{2} \cdot e^\frac{\al+1}{2\al}
	+ \frac{\al+1}{2} \cdot\Big(\frac{\al+1}{\al}\Big)^\frac{1}{2} \cdot e^\frac{\al+1}{2\al} \\
	&=& \Big(\frac{\al+1}{\al}\Big)^\frac{1}{2} \cdot \Big\{ \frac{\al}{2} \cdot \frac{\al+1}{\al} + \frac{\al+1}{2}\Big\}
		\cdot e^\frac{\al+1}{2\al},
  \eas
  rearranging shows that (\ref{31.2}) holds in both these cases.
\qed
Indeed, we can thereby derive a family of relatives of (\ref{1.1}) which contain some superalgebraically growing quantities,
and which will thereby form a crucial ingredient to our analysis related to the bounds claimed in (\ref{14.1}), as to be detailed
in Lemma \ref{lem37}.
\begin{lem}\label{lem32}
  Let $n\ge 1$ and $\Om\subset\R^n$ be a bounded domain with smooth boundary,
  and let $\mu>0$.
  Then there exists $\Lam_3(\mu)>0$ with the property that whenever $\al\in (0,\min\{1,\frac{2}{n} \})$,
  one can find $\Lam_4(\al,\mu)>0$ such that if $\vp\in C^1(\bom)$ is nonnegative and such that (\ref{2.1}) holds, then
  \bea{32.1}
	\io \frac{(\vp+1)^{\al+1}}{1+\eps\vp} \cdot e^{(\vp+1)^\al}
	&\le& \Lam_3(\mu) \al^2 \io \frac{(\vp+1)^{2\al-1}}{1+\eps\vp} \cdot e^{(\vp+1)^\al} |\na\vp|^2 \nn\\
	& & + \Lam_4(\al,\mu) \io |\na\vp|^2
	+ \Lam_4(\al,\mu)
	\qquad \mbox{for all } \eps\in (0,1).
  \eea
\end{lem}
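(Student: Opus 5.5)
Throughout I write $I_\eps(\vp):=\io \frac{(\vp+1)^{\al+1}}{1+\eps\vp}\, e^{(\vp+1)^\al}$. The idea is to split off from the integrand a factor $(\vp+1)^\al$ and control it by H\"older's inequality using only the $L^1$ bound (\ref{2.1}). The remaining factor is written as $\psi^2$ for a function $\psi$ whose gradient produces exactly the weight $(\vp+1)^{2\al-1}$ appearing on the right of (\ref{32.1}). Precisely, let
\bas
	\psi_\eps:=\frac{(\vp+1)^\frac{1}{2}}{(1+\eps\vp)^\frac12}\, e^{\frac12(\vp+1)^\al},
	\qquad\mbox{so that}\qquad
	I_\eps(\vp)=\io \psi_\eps^2\,(\vp+1)^\al.
\eas
By H\"older's inequality with exponents $\frac1{1-\al}$ and $\frac1\al$, and by (\ref{2.1}),
\bas
	I_\eps(\vp)\le \|\psi_\eps\|_{L^{\frac{2}{1-\al}}(\Om)}^2\cdot\Big\{\io(\vp+1)\Big\}^\al
	\le (\mu+|\Om|+1)\,\|\psi_\eps\|_{L^{\frac{2}{1-\al}}(\Om)}^2 .
\eas
The restriction $\al<\frac2n$ guarantees $\frac{2}{1-\al}\le\frac{2n}{n-2}$ when $n\ge3$, so the Sobolev embedding gives
\bas
	\|\psi\|_{L^{\frac{2}{1-\al}}(\Om)}^2\le c_S\big(\|\na\psi\|_{L^2(\Om)}^2+\|\psi\|_{L^1(\Om)}^2\big).
\eas
For $n\ge3$ the constant $c_S$ can be taken uniform in $\al$, by passing first to the endpoint $L^{\frac{2n}{n-2}}$ via H\"older's inequality on the bounded domain $\Om$.

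\textbf{Main obstacle: the case $n\le2$.} Here $\al$ may approach $1$, so $\frac{2}{1-\al}$ becomes unbounded and Sobolev constants would grow; this case must be handled separately if $\Lam_3$ is to depend on $\mu$ only. For $n=1$ the embedding $W^{1,2}(\Om)\hra L^\infty(\Om)$ settles the issue. For $n=2$, the first thing I would try is to let the $\al$-dependent part of the Sobolev constant (of order $\frac1{1-\al}$ from Trudinger-type growth) enter only through the threshold and lower-order terms below, which are allowed to depend on $\al$ through $\Lam_4$. Failing that, I would fix one exponent $q_0$, use the weaker H\"older splitting $\|\psi\|_{L^{q_0}}^2\,\|(\vp+1)^\al\|_{L^{q_0'}}$ for the bulk, and treat large values of $\vp$ by a level-set argument as in Lemma \ref{lem2}.

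\textbf{The $L^1$ term.} Since $\psi_\eps\le\rho_\eps(\vp)$ with $\rho_\eps$ as in (\ref{31.1}), Lemma \ref{lem2} yields, for any $\eta>0$,
\bas
	\|\psi_\eps\|_{L^1(\Om)}^2\le \eta\, I_\eps(\vp)+\Lam_2(\eta,\al,\mu).
\eas
Choosing $\eta:=\frac{1}{2c_S(\mu+|\Om|+1)}$ lets us absorb $\eta I_\eps(\vp)$ into the left-hand side, at the expense of an additive constant that is absorbed into $\Lam_4(\al,\mu)$.

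\textbf{The gradient term.} As in Lemma \ref{lem31}, I compute
\bas
	\psi_\eps'(\xi)=\frac{\al}{2}\,\frac{(\xi+1)^{\al-\frac12}}{(1+\eps\xi)^\frac12}\,e^{\frac12(\xi+1)^\al}
	+\frac{1-\eps}{2}\,\frac{(\xi+1)^{-\frac12}}{(1+\eps\xi)^\frac32}\,e^{\frac12(\xi+1)^\al}.
\eas
The first summand squared is exactly $\frac{\al^2}{4}\frac{(\xi+1)^{2\al-1}}{1+\eps\xi}e^{(\xi+1)^\al}$. The second summand is dominated by the first wherever $(\xi+1)^{\al}\ge\frac1\al$. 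On the complementary bounded range $\xi+1<\al^{-1/\al}$, it is bounded by a constant $c(\al)$. Hence
\bas
	\psi_\eps'^2(\xi)\le 2\al^2\,\frac{(\xi+1)^{2\al-1}}{1+\eps\xi}\,e^{(\xi+1)^\al}+c(\al)^2
	\qquad\mbox{for all } \xi\ge0 .
\eas
Inserting this into $\|\na\psi_\eps\|_{L^2(\Om)}^2=\io\psi_\eps'^2(\vp)|\na\vp|^2$ and combining with the H\"older, Sobolev and $L^1$ steps gives (\ref{32.1}). The resulting $\Lam_3(\mu)$ is a multiple of $c_S(\mu+|\Om|+1)$, hence independent of $\al$, while all $\al$-dependent constants ($c(\al)^2$, $\Lam_2$) are collected in $\Lam_4(\al,\mu)$.
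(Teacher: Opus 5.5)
The case $n=2$ is a genuine gap, and it comes from your choice of H\"older splitting rather than from an unfinished detail. Your cases $n\ge3$ and $n=1$ are correct, as are your derivative bound for $\psi_\eps$ and your handling of the $L^1$ term via Lemma \ref{lem2} with absorption. The difficulty is this: you put the factor $(\vp+1)^\al$ on the zero-order side, using exponents $\frac1{1-\al}$ and $\frac1\al$. That forces you to use $W^{1,2}(\Om)\hra L^{\frac{2}{1-\al}}(\Om)$. For $n=2$ this exponent is unbounded as $\al\nearrow1$, and the squared embedding constant necessarily grows like $\frac1{1-\al}$, since the Trudinger--Moser rate is sharp. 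Your chain of inequalities therefore puts a coefficient of order $\frac{\al^2}{1-\al}$ in front of $\io\frac{(\vp+1)^{2\al-1}}{1+\eps\vp}e^{(\vp+1)^\al}|\na\vp|^2$. The lemma instead requires $\Lam_3(\mu)\al^2$, with $\Lam_3$ independent of $\al\in(0,1)$. You do not carry out either fallback, and the first does not obviously work. The weight $\frac{(\vp+1)^{2\al-1}}{1+\eps\vp}e^{(\vp+1)^\al}$ is unbounded in $\vp$, so the $\al$-dependent excess in that coefficient cannot simply be moved into $\Lam_4(\al,\mu)\io|\na\vp|^2+\Lam_4(\al,\mu)$. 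Your route does give a uniform $\Lam_3$ for, say, $\al\le\frac12$, by passing to $L^4$. That would be enough for the later use in Lemma \ref{lem37}, where $\al$ is small, but it is not the statement.

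The missing idea is to put $(\vp+1)^\al$ on the gradient side instead, and to use an embedding below the energy level. The paper applies $\|\psi\|_{L^2(\Om)}^2\le c_1\|\na\psi\|_{L^q(\Om)}^2+c_1\|\psi\|_{L^1(\Om)}^2$ with $q:=\max\{\frac{2n}{n+2},1\}$. For $n=2$ this is the single fixed embedding $W^{1,1}(\Om)\hra L^2(\Om)$. It is applied directly to $\psi=\rho_\eps(\vp)$ from (\ref{31.1}), whose square integrates to the left side of (\ref{32.1}). By (\ref{31.2}), $|\na\rho_\eps(\vp)|\le\al f|\na\vp|+c_2(\al)|\na\vp|$, where $f^2=\frac{(\vp+1)^{2\al-1}}{1+\eps\vp}e^{(\vp+1)^\al}\cdot(\vp+1)^\al$. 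H\"older's inequality with exponents $\frac2q$ and $\frac2{2-q}$ then gives
\[
	\|f\na\vp\|_{L^q(\Om)}^2\le\io\frac{(\vp+1)^{2\al-1}}{1+\eps\vp}e^{(\vp+1)^\al}|\na\vp|^2\cdot\Big\{\io(\vp+1)^{\frac{q\al}{2-q}}\Big\}^{\frac{2-q}{q}}.
\]
Here $\frac{q\al}{2-q}$ equals $\frac{n\al}{2}$ for $n\ge2$ and $\al$ for $n=1$, so it is at most $1$. The last factor is therefore bounded by $(\mu+|\Om|)^{\frac{2-q}{q}}$ via (\ref{2.1}). In this arrangement the extra factor $(\vp+1)^\al$ only ever has to be integrated to a power at most $1$, which is exactly what $\al<\min\{1,\frac2n\}$ guarantees. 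As a result, no $\al$-dependent Sobolev constant arises in any dimension.
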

\proof
  We let $q:=\max\{\frac{2n}{n+2},1\} \in [1,2)$, and may then rely on the continuity of the embedding $W^{1,q}(\Om) \hra L^2(\Om)$
  to find $c_1>0$ such that
  \be{32.2}
	\|\psi\|_{L^2(\Om)}^2 \le c_1 \|\na\psi\|_{L^q(\Om)}^2
	+ c_1 \|\psi\|_{L^1(\Om)}^2
	\qquad \mbox{for all } \psi\in C^1(\bom).
  \ee
  Fixing $\al\in (0,\min\{1,\frac{2}{n} \}]$, $\mu>0$ and $0\le\vp\in C^1(\bom)$ such that $\io \vp\le \mu$, we thus obtain that
  if we let $(\rho_\eps)_{\eps\in (0,1)}$ be as in Lemma \ref{lem31}, then
  \bea{32.3}
	\io \frac{(\vp+1)^{\al+1}}{1+\eps\vp} \cdot e^{(\vp+1)^\al}
	= \|\rho_\eps(\vp)\|_{L^2(\Om)}^2
	\le c_1\|\na\rho_\eps(\vp)\|_{L^q(\Om)}^2
	+ c_1 \|\rho_\eps(\vp)\|_{L^1(\Om)}^2
  \eea
  for all $\eps\in (0,1)$.
  Here, abbreviating $c_2\equiv c_2(\al):=\frac{(\al+1)^\frac{3}{2}}{\sqrt{\al}} \cdot e^\frac{\al+1}{2\al}$ we see that
  thanks to (\ref{31.2}) and the H\"older inequality,
  \bea{32.4}
	c_1 \|\na\rho_\eps(\vp)\|_{L^q(\Om)}^2
	&=& c_1 \|\rho_\eps'(\vp)\na\vp\|_{L^q(\Om)}^2 \nn\\
	&\le& c_1 \cdot \bigg\|
		\Big\{ \al \cdot \frac{(\vp+1)^\frac{3\al-1}{2}}{(1+\eps\vp)^\frac{1}{2}} \cdot e^{\frac{1}{2}(\vp+1)^\al} + c_2
		\big\} \cdot |\na\vp| \bigg\|_{L^q(\Om)}^2 \nn\\
	&\le& 2c_1 \al^2 \cdot \bigg\| \frac{(\vp+1)^\frac{3\al-1}{2}}{(1+\eps\vp)^\frac{1}{2}} \cdot e^{\frac{1}{2}(\vp+1)^\al}
		|\na\vp| \bigg\|_{L^q(\Om)}^2
	+ 2c_1 c_2^2 \|\na\vp\|_{L^q(\Om)}^2 \nn\\
	&=& 2c_1 \al^2 \cdot \bigg\{ \io \Big\{ \frac{(\vp+1)^{2\al-1}}{1+\eps\vp} \cdot e^{(\vp+1)^\al} |\na\vp|^2 \Big\}^\frac{q}{2}
		\cdot (\vp+1)^\frac{q\al}{2} \bigg\}^\frac{2}{q}
	+ 2c_1 c_2^2 \cdot \bigg\{ \io |\na\vp|^q \bigg\}^\frac{2}{q} \nn\\
	&\le& 2c_1 \al^2 \cdot \bigg\{ \io \frac{(\vp+1)^{2\al-1}}{1+\eps\vp} \cdot e^{(\vp+1)^\al} |\na\vp|^2 \bigg\}
		\cdot \bigg\{ \io (\vp+1)^\frac{q\al}{2-q}\bigg\}^\frac{2-q}{q} \nn\\
	& & + 2c_1 c_2^2 |\Om|^\frac{2-q}{q} \io |\na\vp|^2
	\qquad \mbox{for all } \eps\in (0,1),
  \eea
  where we note that when $n\ge 2$, we have $q=\frac{2n}{n+2}$ and thus $\frac{q\al}{2-q}=\frac{n\al}{2}\le 1$ due to our restriction
  that $\al\le \frac{2}{n}$, while if $n=1$, then $q=1$ and hence $\frac{q\al}{2-q}=\al\le 1$.
  Therefore, regardless of the size of $n$ we may draw on (\ref{2.1}) to estimate
  \bas
	\io (\vp+1)^\frac{q\al}{2-q} \le \io (\vp+1) \le \mu+|\Om|,
  \eas
  so that (\ref{32.4}) ensures that
  \bea{32.5}
	c_1 \|\na\rho_\eps(\vp)\|_{L^q(\Om)}^2
	&\le& 2c_1 \al^2 \cdot (\mu+|\Om|)^\frac{2-q}{q} \io \frac{(\vp+1)^{2\al-1}}{1+\eps\vp} \cdot e^{(\vp+1)^\al} |\na\vp|^2 \nn\\
	& & + 2c_1 c_2^2 |\Om|^\frac{2-q}{q} \io |\na\vp|^2
	\qquad \mbox{for all } \eps\in (0,1).
  \eea
  Since, apart from that, an application of Lemma \ref{lem2} shows that if we let
  $c_3\equiv c_3(\al,\mu):=\Lam_2(\frac{1}{2c_1},\al,\mu)$ with $\Lam_2(\cdot,\cdot,\cdot)$ as found there, then
  \bas
	c_1 \|\rho_\eps(\vp)\|_{L^1(\Om)}^2
	&=& c_1 \cdot \bigg\{ \io \frac{(\vp+1)^\frac{\al+1}{2}}{(1+\eps\vp)^\frac{1}{2}} \cdot e^{\frac{1}{2}(\vp+1)^\al} \bigg\}^2 \\
	&\le& \frac{1}{2} \io \frac{(\vp+1)^{\al+1}}{1+\eps\vp} \cdot e^{(\vp+1)^\al}
	+ c_3
	\qquad \mbox{for all } \eps\in (0,1),
  \eas
  a combination of (\ref{32.3}) with (\ref{32.5}) shows that (\ref{32.1}) holds if we let
  $\Lam_3 \equiv \Lam_3(\mu):=4c_1 \cdot (\mu+|\Om|)^\frac{2-q}{q}$ and
  $\Lam_4\equiv \Lam_4(\al,\mu):=\max\big\{ 4c_1 c_2^2 |\Om|^\frac{2-q}{q} \, , \, 2c_3\big\}$.
\qed
\subsection{A quantitative outcome of a Moser-type iteration}
The following is a consequence of a slightly more general statement on the outcome of a Moser-type iterative reasoning,
as recorded in \cite[Lemma 2.2]{ding_win_NoDEA}.
\begin{lem}\label{lem_moser}
  Let $n\ge 1$ and $\Omega\subset\R^n$ be a bounded domain with smooth boundary, and let $q\in [1,\infty]$ be such that
  $q>\frac{n}{2}$.
  Then for all $L>0$ one can find $\Lam_4(q,L)>0$ with the property that whenever $T\in (0,\infty]$,
  $a\in C^1(\bom\times (0,T))$,
  $f\in C^0(\bom\times (0,T))$ and
  $z\in C^0(\bom\times [0,T)) \cap C^{2,1}(\bom\times (0,T))$ are such that
  \be{a}
	a(x,t)\ge \frac{1}{L}
	\qquad \mbox{for all } (x,t)\in \Om\times (0,T),
  \ee
  \be{f}
	\|f(\cdot,t)\|_{L^q(\Om)} \le L
	\qquad \mbox{for all } t\in (0,T),
  \ee
  and that $z$ is nonnegative with
  \be{m0}
	\left\{ \begin{array}{ll}
	z_t \le \na \cdot \big( a(x,t)\na z\big) + f(x,t) z
	\qquad & x\in\Om, \ t\in (0,T), \\[1mm]
	\frac{\pa z}{\pa\nu} \le 0,
	\qquad & x\in\pO, \ t\in (0,T),
	\end{array} \right.
  \ee
  we have
  \be{m1}
	\|z(\cdot,t)\|_{L^\infty(\Om)} \le \Lam_4(q,L) \cdot
	\max \bigg\{ \|z(\cdot,0)\|_{L^\infty(\Om)} \, , \, \sup_{s\in (0,T)} \|z(\cdot,s)\|_{L^1(\Om)} \bigg\}
	\qquad \mbox{for all } t\in (0,T).
  \ee
\end{lem}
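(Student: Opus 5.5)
The plan is to deduce Lemma \ref{lem_moser} directly from \cite[Lemma 2.2]{ding_win_NoDEA}, which treats more general diffusion and source structures. The argument is a Moser-type iteration applied to the integrals $\io z^p$ along the sequence $p_k=2^k$.

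For $p\ge 2$, I would test the inequality in (\ref{m0}) against $z^{p-1}$. The boundary term is nonpositive because $\frac{\pa z}{\pa\nu}\le 0$ and $z\ge 0$. Combined with (\ref{a}), this yields
\[
	\frac{1}{p}\frac{d}{dt}\io z^p + \frac{4(p-1)}{Lp^2}\io |\na z^{p/2}|^2 \le \io f z^p .
\]
Next I would estimate $\io fz^p\le L\|z^{p/2}\|_{L^{2q'}(\Om)}^2$ using (\ref{f}) and H\"older's inequality. Since $q>\frac n2$, we have $2q'<\frac{2n}{(n-2)_+}$. The Gagliardo--Nirenberg inequality then lets us absorb this term into the gradient term, at the price of $c\,p^{\kappa}\big(\io z^{p/2}\big)^2$ with some $\kappa=\kappa(n,q)>0$. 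Adding $\io z^p$ to both sides and interpolating once more produces the ODI
\[
	\frac{d}{dt}\io z^p+\io z^p\le c\,p^{\kappa'}\Big(\sup_s\io z^{p/2}(\cdot,s)\Big)^2 .
\]
An ODE comparison gives $M_k:=\sup_t\io z^{p_k}\le \max\{|\Om|\,\|z_0\|_\infty^{p_k},\,c\,b^k M_{k-1}^2\}$ for some $b>1$.

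The standard induction then shows that $M_k^{1/p_k}$ stays bounded by $C\max\{\|z_0\|_{L^\infty(\Om)},M_0\}$, where $M_0=\sup_s\|z(\cdot,s)\|_{L^1(\Om)}$. This works because $\prod_k (cb^k)^{1/2^k}<\infty$. Letting $k\to\infty$ gives (\ref{m1}). The constant depends only on $q$, $L$, $n$ and $\Om$, and is independent of $T$, since every estimate is uniform in time thanks to the absorbed $+\io z^p$ term.

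The step I expect to require the most care is the absorption step, where the powers of $p$ produced by Young's inequality must be tracked explicitly so that the growth is only geometric in $k$. The case $q=\infty$ is easier and can be handled the same way. Since all of this is exactly the content of \cite[Lemma 2.2]{ding_win_NoDEA}, the proof in the paper can simply verify that the hypotheses there are met by $a$, $f$ and $z$ under (\ref{a}), (\ref{f}) and (\ref{m0}).
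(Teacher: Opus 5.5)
Your proposal is correct and takes the same route as the paper. The paper gives no independent argument; it records Lemma \ref{lem_moser} as a consequence of the more general Moser-type iteration in \cite[Lemma 2.2]{ding_win_NoDEA}, exactly as you suggest, and your outline of the underlying iteration is a sound sketch of what that reference provides: testing by $z^{p-1}$ with a nonpositive boundary term, Gagliardo--Nirenberg under $q>\frac{n}{2}$ giving only polynomial growth in $p$, and a time-uniform ODE comparison.
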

As substantiated in Lemma \ref{lem4} below, our subsequent analysis will make use of Lemma \ref{lem_moser} through the following
consequence thereof.
\begin{cor}\label{cor_moser}
  Suppose that $n\ge 1$ and $\Omega\subset\R^n$ be a bounded domain with smooth boundary, that $q\in [1,\infty]$ is
  such that $q>\frac{n}{2}$, and that $L>0$, and let $\Lam_4(q,L)$ be as in Lemma \ref{lem_moser}.
  Then whenever $T\in (0,\infty]$, $a\in C^1(\bom\times (0,T))$,
  $f\in C^0(\bom\times (0,T))$ and
  $z\in C^0(\bom\times [0,T)) \cap C^{2,1}(\bom\times (0,T))$ are such that $f\ge 0$ and $z\ge 0$ in $\Om\times (0,T)$,
  that (\ref{a}) and (\ref{f}) hold, and that
  \be{m2}
	\left\{ \begin{array}{ll}
	z_t \le \na \cdot \big( a(x,t)\na z\big) + f(x,t)
	\qquad & x\in\Om, \ t\in (0,T), \\[1mm]
	\frac{\pa z}{\pa\nu} \le 0,
	\qquad & x\in\pO, \ t\in (0,T),
	\end{array} \right.
  \ee
  it follows that
  \be{m3}
	\|z(\cdot,t)\|_{L^\infty(\Om)} \le \Lam_4(q,L) \cdot
	\max \bigg\{ \|z(\cdot,0)+1\|_{L^\infty(\Om)} \, , \, \sup_{s\in (0,T)} \|z(\cdot,s)+1\|_{L^1(\Om)} \bigg\}
	\qquad \mbox{for all } t\in (0,T).
  \ee
\end{cor}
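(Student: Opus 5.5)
We reduce Corollary \ref{cor_moser} to Lemma \ref{lem_moser} by considering $\tilde z := z+1$ in place of $z$, and absorbing the source term $f$ into a zeroth-order coefficient. Since $f\ge 0$ and $z\ge 0$, we have $\tilde z\ge 1$, and hence
\[
	f(x,t) \le f(x,t)\,\tilde z(x,t)
	\qquad \mbox{for all } (x,t)\in \Om\times (0,T).
\]
Moreover $\tilde z_t = z_t$ and $\na\tilde z=\na z$. Thus (\ref{m2}) implies
\[
	\tilde z_t \le \na\cdot\big(a(x,t)\na\tilde z\big) + f(x,t)\,\tilde z
	\quad \mbox{in } \Om\times (0,T),
	\qquad
	\frac{\pa\tilde z}{\pa\nu} = \frac{\pa z}{\pa\nu}\le 0
	\quad \mbox{on } \pO\times (0,T).
\]
This is exactly (\ref{m0}) with $z$ replaced by $\tilde z$.

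The remaining hypotheses of Lemma \ref{lem_moser} carry over unchanged. The function $\tilde z$ is nonnegative and has the same regularity as $z$, namely $C^0(\bom\times[0,T))\cap C^{2,1}(\bom\times(0,T))$. The coefficients $a$ and $f$ are the same, so (\ref{a}) and (\ref{f}) hold with the same $L$ and $q$. Lemma \ref{lem_moser} therefore gives
\[
	\|\tilde z(\cdot,t)\|_{L^\infty(\Om)} \le \Lam_4(q,L)\cdot
	\max\Big\{ \|z(\cdot,0)+1\|_{L^\infty(\Om)} \, , \, \sup_{s\in(0,T)}\|z(\cdot,s)+1\|_{L^1(\Om)}\Big\}
\]
for all $t\in(0,T)$.

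Finally, since $0\le z\le\tilde z$, we have $\|z(\cdot,t)\|_{L^\infty(\Om)}\le\|\tilde z(\cdot,t)\|_{L^\infty(\Om)}$, and (\ref{m3}) follows.

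There is no genuine obstacle here. The only point requiring care is the sign condition $f\ge 0$, which is what justifies $f\le f\tilde z$; the shift by $1$ is what makes $\tilde z\ge 1$.
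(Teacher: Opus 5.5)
Your proof is correct and follows essentially the same route as the paper. It passes to $\tilde z=z+1$, uses $f\ge 0$ and $\tilde z\ge 1$ to get $f\le f\tilde z$, applies Lemma \ref{lem_moser} to $\tilde z$, and concludes from $0\le z\le \tilde z$.
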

\proof
  Letting $\wh{z}:=z+1$, from (\ref{m2}) we obtain that $\frac{\pa\wh{z}}{\pa\nu}\le 0$ on $\pO\times (0,T)$, and that
  since both $f$ and $z$ are nonnegative,
  \bas
	\wh{z}_t = z_t
	\le \na\cdot \big(a(x,t)\na z\big) + f(x,t)
	= \na\cdot \big(a(x,t)\na\wh{z}\big) + \frac{1}{z+1} \cdot f(x,t)\wh{z}
	\le \na\cdot \big(a(x,t)\na\wh{z}\big) + f(x,t)\wh{z}
  \eas
  in $\Om\times (0,T)$. Relying on (\ref{a}) and (\ref{f}), an application of Lemma \ref{lem_moser} therefore shows that
  \bas
	\|\wh{z}(\cdot,t)\|_{L^\infty(\Om)} \le \Lam_4(q,L) \cdot
	\max \bigg\{ \|\wh{z}(\cdot,0)\|_{L^\infty(\Om)} \, , \, \sup_{s\in (0,T)} \|\wh{z}(\cdot,s)\|_{L^1(\Om)} \bigg\}
	\qquad \mbox{for all } t\in (0,T),
  \eas
  from which (\ref{m3}) already follows due to the fact that $|z|=z\le\wh{z}$ by nonnegativity of $z$.
\qed
\mysection{An approximate variant of (\ref{0}) and basic properties}
As will turn out below, a regularization of (\ref{0}) which does not only admit global classical solutions, but which simultaneously
also is compatible with some favorable structural properties formally enjoyed by (\ref{0}), to be discovered in Lemma \ref{lem11}
and Lemma \ref{lem37}, can be achieved by considering the family of problems given by
\be{0eps}
	\left\{ \begin{array}{ll}	
	u_{\eps t} = D \Del \ueps - \na\cdot\Big( \frac{\ueps}{1+\eps\ueps} \na\veps\Big),
	\qquad & x\in\Om, \ t>0, \\[1mm]
	v_{\eps t} = d\Del\veps + \na\cdot\Big( \frac{\ueps}{1+\eps\ueps} \na\veps\Big) - \veps + \frac{\ueps}{1+\eps\ueps},
	\qquad & x\in\Om, \ t>0, \\[1mm]
	\frac{\pa \ueps}{\pa\nu}=\frac{\pa \veps}{\pa\nu}=0,
	\qquad & x\in\pO, \ t>0, \\[1mm]
	\ueps(x,0)=u_0(x), \quad \veps(x,0)=v_0(x),
	\qquad & x\in\Om,
	\end{array} \right.
\ee
for $\eps\in (0,1)$.
As a first step toward verifying this, let us record the following statement on local existence and extensibility therefor,
and on two basic properties related to the evolution of corresponding mass functionals.
\begin{lem}\label{lem_loc}
  Let $n\ge 1$ and $\Om\subset\R^n$ be a smoothly bounded domain, let $D>0$ and $d>0$, and assume (\ref{init}).
  Then for each $\eps\in (0,1)$, there exist $\tme\in (0,\infty]$ and nonnegative functions
  \bas
	\lbal
	\ueps\in \bigcap_{p>n} C^0([0,\tme);W^{1,p}(\Om)) \cap C^{2,1}(\bom\times (0,\tme))
	\qquad \mbox{and} \\[1mm]
	\veps\in \bigcap_{p>n} C^0([0,\tme);W^{1,p}(\Om)) \cap C^{2,1}(\bom\times (0,\tme))
	\ear
  \eas
  such that $(\ueps,\veps)$ solves (\ref{0eps}) in the classical sense in $\Om\times (0,\tme)$, and that
  \be{ext}
	\mbox{if $\tme<\infty$, \quad then \quad}
	\limsup_{t\nearrow\tme} \Big\{ \|\ueps(\cdot,t)\|_{W^{1,p}(\Om)} + \|\veps(\cdot,t)\|_{W^{1,p}(\Om)} \Big\}=\infty
	\mbox{ for all } p>n.
  \ee
  This solution has the additional property that
  \be{mass}
	\io \ueps(\cdot,t) = \io u_0
	\quad \mbox{and} \quad
	\io \veps(\cdot,t) \le \max \bigg\{ \io u_0 \, , \, \io v_0\bigg\}
	\qquad \mbox{for all } t\in (0,\tme).
  \ee
\end{lem}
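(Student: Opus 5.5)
For fixed $\eps\in(0,1)$ the system (\ref{0eps}) is a parabolic system whose cross-diffusive coefficients $\frac{\ueps}{1+\eps\ueps}$ are bounded and smooth in $\ueps$. My plan for the local existence and extensibility part is therefore a standard Amann-type (or Banach fixed point) argument.

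The point that needs care is normal parabolicity of the principal part. The diffusion matrix acting on $(\ueps,\veps)$ is
\bas
	\left( \begin{array}{cc} D & -\frac{\ueps}{1+\eps\ueps} \\ 0 & d+\frac{\ueps}{1+\eps\ueps} \end{array} \right).
\eas
This matrix is triangular, with eigenvalues $D>0$ and $d+\frac{\ueps}{1+\eps\ueps}\ge d>0$ as long as $\ueps\ge0$. To avoid relying on nonnegativity before it is proved, I would first replace $\frac{\ueps}{1+\eps\ueps}$ by $\frac{(\ueps)_+}{1+\eps(\ueps)_+}$ in both equations. For the modified system Amann's theory in $W^{1,p}(\Om)$ with $p>n$ applies, since triangular systems with positive diagonal are normally elliptic. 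It yields a unique maximal classical solution in the stated regularity class, together with the extensibility criterion (\ref{ext}). The initial data are admissible because $W^{1,\infty}(\Om)\subset W^{1,p}(\Om)$ for every $p$. Independence of the maximal time from $p$ follows by the usual argument, since for $p>n$ the $W^{1,p}$ norms control one another along the smoothing flow.

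Next I would establish nonnegativity. The first equation can be written as a linear equation for $\ueps$, namely
\bas
	u_{\eps t}=D\Del\ueps-\na\cdot(b\,\ueps),
\eas
where the drift is $b=\frac{\na\veps}{1+\eps(\ueps)_+}$ on $\{\ueps>0\}$ and the flux vanishes where $\ueps\le0$. Testing with $-(\ueps)_-$ and using that the flux is zero on $\{\ueps\le0\}$ gives $\frac{d}{dt}\io(\ueps)_-^2\le0$, hence $\ueps\ge0$. Alternatively the comparison principle for linear parabolic equations with bounded coefficients gives the same conclusion. So the cutoff is never active and the solution solves (\ref{0eps}). For $\veps$ I would view the second equation as $v_{\eps t}=\na\cdot(a\na\veps)-\veps+g$, with $a\ge d$ and source $g=\frac{\ueps}{1+\eps\ueps}\ge0$, so the maximum principle yields $\veps\ge0$.

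The mass properties come from integrating the equations and using the Neumann conditions, under which all divergence terms integrate to zero. This gives $\io\ueps=\io u_0$. For $\veps$ it gives
\bas
	\frac{d}{dt}\io\veps=-\io\veps+\io\frac{\ueps}{1+\eps\ueps}\le-\io\veps+\io u_0 .
\eas
An ODE comparison then shows $\io\veps\le\max\{\io u_0,\io v_0\}$.

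The only mildly delicate step is checking that the abstract framework applies to the quasilinear, non-diagonal principal part and delivers exactly the $W^{1,p}$ extensibility criterion. I expect this to follow directly from Amann's results once the triangular structure is noted, so I anticipate no real obstacle.
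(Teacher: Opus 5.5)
Your proposal is correct and follows essentially the same route as the paper, which simply invokes Amann's theory for local existence and the extensibility criterion (\ref{ext}) and obtains (\ref{mass}) by integrating (\ref{0eps}) together with an ODE comparison. Your cut-off and testing argument for nonnegativity and your remark on the $p$-independence of $\tme$ make explicit what the paper leaves implicit; the one technical point is that $\xi\mapsto\frac{\xi_+}{1+\eps\xi_+}$ is only Lipschitz, so for the $C^{2,1}$ regularity you should either bootstrap once $\ueps\ge 0$ is known (the coefficient then agrees with the smooth map $\xi\mapsto\frac{\xi}{1+\eps\xi}$), or apply Amann's framework with smooth coefficients on an open set of admissible values such as $\{\xi>-\delta\}$ with $\delta>0$ small.
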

\proof
  The statement concerning local existence and the extensibility criterion in (\ref{ext}) directly
  follows from the standard parabolic theory developed in \cite{amann}, while the mass property (\ref{mass})
  readily results from straightforward integration in (\ref{0eps})
  along with a simple ODE comparsion argument.
\qed
From now on, we shall fix $n\le 5$ and a bounded domain $\Om\subset\R^n$ with smooth boundary, as well as numbers $D>0$ and $d>0$
and functions $u_0$ and $v_0$ fulfilling (\ref{init}), and given $\eps\in (0,1)$ we then let $\tme$ and $(\ueps,\veps)$
be as obtained in Lemma \ref{lem_loc}.\abs
Our derivation of a first regularity property beyond those in (\ref{mass}) will make essential use of the circumstance
that the regularization underlying (\ref{0eps}) treats the crucial ingredients $\pm \na\cdot (u\na v)$ to (\ref{0})
in a synchronous manner, thus facilitating a favorable cancellation encountered when adding both parabolic equations in (\ref{0eps}):
\begin{lem}\label{lem_w}
  For $\eps\in (0,1)$, let
  \be{w}
	\weps(x,t):=\ueps(x,t)+\veps(x,t),
	\qquad x\in\bom, \ t\in [0,\tme).
  \ee
  Then $\weps$ lies in $\bigcap_{p>n} C^0([0,\tme);W^{1,p}(\Om)) \cap C^{2,1}(\bom\times (0,\tme))$ and satisfies
  \be{0w}
	\lball
	w_{\eps t} = D\Del\weps + (d-D) \Del\veps - \veps + \frac{\ueps}{1+\eps\ueps},
	\qquad & x\in\Om, \ t\in (0,\tme), \\[1mm]
	\frac{\pa \weps}{\pa\nu}=0,
	\qquad & x\in\pO, \ t\in (0,\tme), \\[1mm]
	\weps(x,0)=u_0(x)+v_0(x),
	\qquad & x\in\Om,
	\end{array} \right.
  \ee
  in the classical sense.
\end{lem}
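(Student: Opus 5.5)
The claim follows by adding the first two equations in (\ref{0eps}) and letting the cross-diffusive fluxes cancel. By Lemma \ref{lem_loc}, both $\ueps$ and $\veps$ lie in $\bigcap_{p>n} C^0([0,\tme);W^{1,p}(\Om)) \cap C^{2,1}(\bom\times (0,\tme))$. Both of these spaces are linear, so the sum $\weps=\ueps+\veps$ inherits the asserted regularity at once.

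For the PDE, I add the first two equations in (\ref{0eps}) pointwise in $\Om\times(0,\tme)$, which is legitimate since both components are classical there. The flux term $-\na\cdot\big(\frac{\ueps}{1+\eps\ueps}\na\veps\big)$ from the first equation cancels exactly against $+\na\cdot\big(\frac{\ueps}{1+\eps\ueps}\na\veps\big)$ from the second. This uses that the regularization handles both occurrences of the cross term in the same way. What remains is
\[
	w_{\eps t} = D\Del\ueps + d\Del\veps - \veps + \frac{\ueps}{1+\eps\ueps}.
\]
I then rewrite $D\Del\ueps + d\Del\veps = D\Del(\ueps+\veps) + (d-D)\Del\veps = D\Del\weps + (d-D)\Del\veps$, which yields the first line of (\ref{0w}).

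The boundary condition follows by adding the homogeneous Neumann conditions for $\ueps$ and $\veps$ on $\pO\times(0,\tme)$; the normal derivatives exist classically there because of the $C^{2,1}(\bom\times(0,\tme))$ regularity up to the boundary. The initial condition $\weps(\cdot,0)=u_0+v_0$ comes directly from the initial data in (\ref{0eps}), and the continuity of $\weps$ into $W^{1,p}(\Om)$ at $t=0$ ensures it is attained in that sense.

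There is no real obstacle here. The only point needing care is that all manipulations are done where the functions are classically differentiable, namely in $\bom\times(0,\tme)$, with $t=0$ handled through the $C^0([0,\tme);W^{1,p}(\Om))$ continuity.
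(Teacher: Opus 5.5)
Your proposal is correct and matches the paper's argument: the paper also obtains the lemma just by adding the two sub-problems of (\ref{0eps}), noting that the identically regularized cross-diffusive terms cancel, and taking the regularity from Lemma \ref{lem_loc}. Your rewriting $D\Del\ueps + d\Del\veps = D\Del\weps + (d-D)\Del\veps$ and your treatment of the boundary and initial conditions simply spell out the step the paper calls ``straightforward''.
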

\proof
  In view of Lemma \ref{lem_loc}, this can be seen by combining the two sub-problems of (\ref{0eps}) in a straightforward manner.
\qed
The plain structure of (\ref{0w}) allows for simple testing procedures, a general template for which is recorded in the following.
\begin{lem}\label{lem33}
  Let $\rho\in C^2([0,\infty))$ be such that $\rho'\ge 0$ and $\rho''\ge 0$, and assume (\ref{init}). Then
  \be{33.1}
	\frac{d}{dt} \io \rho(\weps)
	+ \frac{D}{2} \io \rho''(\weps) |\na\weps|^2
	\le \frac{(d-D)^2}{2D} \io \rho''(\weps) |\na\veps|^2
	+ \io (\weps+1) \rho'(\weps)
  \ee
  for all $t\in (0,\tme)$ and $\eps\in (0,1)$.
\end{lem}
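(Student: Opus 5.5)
We test the equation in (\ref{0w}) against $\rho'(\weps)$, which is admissible because $\weps\in C^{2,1}(\bom\times (0,\tme))$ and $\frac{\pa\weps}{\pa\nu}=\frac{\pa\veps}{\pa\nu}=0$ on $\pO$. Integrating by parts in both diffusion terms gives
\bas
	\frac{d}{dt} \io \rho(\weps)
	= - D \io \rho''(\weps)|\na\weps|^2
	- (d-D) \io \rho''(\weps) \na\weps\cdot\na\veps
	- \io \rho'(\weps)\veps
	+ \io \rho'(\weps) \frac{\ueps}{1+\eps\ueps}
\eas
for all $t\in (0,\tme)$.

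The cross term is handled by Young's inequality, which is possible because $\rho''\ge 0$. Indeed,
\bas
	- (d-D) \io \rho''(\weps) \na\weps\cdot\na\veps
	\le \frac{D}{2} \io \rho''(\weps)|\na\weps|^2
	+ \frac{(d-D)^2}{2D} \io \rho''(\weps)|\na\veps|^2 .
\eas
Half of the dissipation is thus absorbed, and the term $\frac{D}{2}\io\rho''(\weps)|\na\weps|^2$ remains on the left.

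The zero-order terms are estimated using $\rho'\ge 0$ together with the nonnegativity of $\ueps$ and $\veps$. First, $-\io\rho'(\weps)\veps\le 0$. Second, $\frac{\ueps}{1+\eps\ueps}\le \ueps\le \weps\le\weps+1$, so that
\bas
	\io\rho'(\weps)\frac{\ueps}{1+\eps\ueps}\le \io(\weps+1)\rho'(\weps).
\eas
Combining these bounds yields (\ref{33.1}). No step is a genuine obstacle; the only point requiring care is that the boundary terms vanish and that the signs of $\rho'$ and $\rho''$ are used exactly where indicated.
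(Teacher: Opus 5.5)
Your proposal is correct and follows essentially the same route as the paper's proof. Both test (\ref{0w}) against $\rho'(\weps)$, absorb the cross term by Young's inequality using $\rho''\ge 0$, and bound the zero-order terms via $\rho'\ge 0$ together with $\frac{\ueps}{1+\eps\ueps}\le\ueps\le\weps+1$.
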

\proof
  Let $\eps\in (0,1)$.
  An integration by parts on the basis of (\ref{0w}) then shows that
  \bea{33.2}
	\frac{d}{dt} \io \rho(\weps)
	&=& \io \rho'(\weps) \na\cdot \big\{ D\na\weps + (d-D)\na\veps\big\}
	+ \io \rho'(\weps) \cdot \Big\{ - \veps + \frac{\ueps}{1+\eps\ueps} \Big\} \nn\\
	&=& - D \io \rho''(\weps) |\na\weps|^2
	- (d-D) \io \rho''(\weps) \na\veps\cdot\na\weps \nn\\
	& & - \io \veps \rho'(\weps)
	+ \io \frac{\ueps}{1+\eps\ueps} \cdot \rho'(\weps)
	\qquad \mbox{for all } t\in (0,\tme).
  \eea
  Here, relying on the nonnegativity of $\rho''$ we may invoke Young's inequality to see that
  for all $t\in (0,\tme)$ we have
  \be{33.3}
	- (d-D) \io \rho''(\weps) \na\veps\cdot\na\weps
	\le \frac{D}{2} \io \rho''(\weps) |\na\weps|^2
	+ \frac{(d-D)^2}{2D} \io \rho''(\weps)|\na\veps|^2,
  \ee
  while using that $\ueps\le\weps$ and that $\rho'\ge 0$ we can estimate
  \be{33.4}
	- \io \veps \rho'(\weps)
	+
	\io \frac{\ueps}{1+\eps\ueps} \cdot \rho'(\weps)
	\le \io \ueps \rho'(\weps)
	\le \io (\weps+1) \rho'(\weps)
	\qquad \mbox{for all } t\in (0,\tme).
  \ee
  Inserting (\ref{33.3}) and (\ref{33.4}) into (\ref{33.2}) yields (\ref{33.1}).
\qed
\begin{lem}\label{lem04}
  For $\eps\in (0,1)$, let
  \be{04.1}
	\rho_\eps(\xi):=\frac{(\xi+1)^3}{1+\eps\xi},
	\qquad \xi\ge 0.
  \ee
  Then
  \be{04.2}
	0 \le \rho_\eps'(\xi) \le \frac{3(\xi+1)^2}{1+\eps\xi}
	\qquad \mbox{for all } \xi\ge 0
  \ee
  and
  \be{04.3}
	\frac{3(\xi+1)}{2(1+\eps\xi)} \le \rho_\eps''(\xi) \le \frac{8(\xi+1)}{1+\eps\xi}
	\qquad \mbox{for all } \xi\ge 0.
  \ee
\end{lem}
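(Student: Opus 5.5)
We compute the two derivatives of $\rho_\eps(\xi)=\frac{(\xi+1)^3}{1+\eps\xi}$ explicitly and bring them into a form where numerator and denominator can be compared directly. The quotient rule gives
\bas
	\rho_\eps'(\xi)
	= \frac{3(\xi+1)^2}{1+\eps\xi} - \frac{\eps(\xi+1)^3}{(1+\eps\xi)^2}
	= \frac{(\xi+1)^2}{(1+\eps\xi)^2} \cdot \big\{ 3(1+\eps\xi) - \eps(\xi+1)\big\}
	= \frac{(\xi+1)^2 (3-\eps+2\eps\xi)}{(1+\eps\xi)^2}.
\eas
Since $\eps<1$, we have $3-\eps+2\eps\xi>0$, so $\rho_\eps'\ge 0$. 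The upper bound in (\ref{04.2}) follows from $3-\eps+2\eps\xi\le 3+3\eps\xi$, exactly as in the proof of Lemma \ref{lem1}.

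For the second derivative we differentiate $\rho_\eps'(\xi)=(\xi+1)^2(3-\eps+2\eps\xi)(1+\eps\xi)^{-2}$ once more. This yields
\bas
	\rho_\eps''(\xi)
	= \frac{\xi+1}{(1+\eps\xi)^3} \cdot P_\eps(\xi),
	\quad
	P_\eps(\xi):=2(3-\eps+2\eps\xi)(1+\eps\xi) + 2\eps(\xi+1)(1+\eps\xi) - 2\eps(\xi+1)(3-\eps+2\eps\xi).
\eas
Writing $y:=\eps\xi\ge 0$ and expanding, the quadratic terms $4y^2+2y^2-4y^2$ combine to $2y^2$. Collecting the remaining terms gives
\bas
	P_\eps = 2y^2 + (4-2\eps)y + (6-6\eps+2\eps^2),
\eas
which can be double-checked at $\eps=0$, where $\rho_0''=6(\xi+1)$ and hence $P_0=6$.

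Both claims in (\ref{04.3}) then reduce to bounds on $P_\eps$ relative to $(1+y)^2$.

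\textbf{Upper bound.} We need $P_\eps\le 8(1+y)^2=8+16y+8y^2$. This holds coefficientwise, because $2\le 8$, $4-2\eps\le 16$ and $6-6\eps+2\eps^2\le 6\le 8$.

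\textbf{Lower bound.} We need $P_\eps\ge \frac32(1+y)^2$. The $y^2$ coefficient satisfies $2\ge \frac32$, and the linear coefficient satisfies $4-2\eps\ge 2\ge 3$? This last inequality fails, so a direct coefficientwise comparison does not work. Instead we use that the constant term $6-6\eps+2\eps^2$ has its minimum $\frac32$ at $\eps=\frac32$, so on $(0,1)$ it exceeds $2$. Then
\bas
	P_\eps-\tfrac32(1+y)^2 \ge \tfrac12 y^2 + (1-2\eps)y + \tfrac12 .
\eas
Since $\eps<1$ gives $1-2\eps\ge -1$, the right-hand side is at least $\frac12(y-1)^2\ge 0$.

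This lower bound is the only step requiring care. The sign of the linear coefficient is the delicate point, and it is resolved by absorbing it through completing the square as above. It is cleanest to verify the expansion of $P_\eps$ carefully before doing so.
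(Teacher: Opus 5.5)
Your definition of $P_\eps$ is correct, but its expansion contains an arithmetic slip: the linear coefficient is $6-2\eps$, not $4-2\eps$. Using $3-\eps+2y=3(1+y)-(y+\eps)$ and $\eps(\xi+1)=y+\eps$, your $P_\eps$ simplifies to $6(1+y)^2-6(y+\eps)(1+y)+2(y+\eps)^2$. Its $y$-terms are $12y-6(1+\eps)y+4\eps y=(6-2\eps)y$. Numerically, for $\eps=\frac12$ and $\xi=2$ one gets $\rho_\eps''(2)=\frac{63}{16}$, i.e.\ $P_\eps=\frac{21}{2}$, while your formula gives $\frac{17}{2}$. Your check at $\eps=0$ cannot catch this, since there $y=\eps\xi=0$ and only the constant term is tested.

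The slip is harmless. The true $P_\eps$ exceeds yours by $2y\ge 0$, so your completing-the-square lower bound still holds a fortiori. The upper bound still holds coefficientwise, since $6-2\eps\le 16$. With the correct coefficient, the lower bound is also coefficientwise: $2\ge\frac32$, $6-2\eps>4>3$, and $6-6\eps+2\eps^2=2+2(1-\eps)(2-\eps)>\frac32$. So the ``delicate point'' you describe is an artifact of the miscalculation, and the argument should be corrected accordingly.

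The paper takes a different route for (\ref{04.3}) and never expands into a polynomial. It keeps
$\rho_\eps''=\frac{6(\xi+1)}{1+\eps\xi}-\frac{6\eps(\xi+1)^2}{(1+\eps\xi)^2}+\frac{2\eps^2(\xi+1)^3}{(1+\eps\xi)^3}$. For the lower constant $6-\frac92=\frac32$, it uses Young's inequality to absorb the negative middle term into the third term plus $\frac{9(\xi+1)}{2(1+\eps\xi)}$. For the upper constant $6+2=8$, it notes that the third term is at most one third of the first, because $\eps(\xi+1)\le 1+\eps\xi$.

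Equivalently, with $s:=\frac{\eps(\xi+1)}{1+\eps\xi}\in[0,1]$ one has $\rho_\eps''=\frac{\xi+1}{1+\eps\xi}\,(6-6s+2s^2)$. Everything then reduces to bounding one fixed quadratic on $[0,1]$. Your $P_\eps/(1+y)^2$ is exactly this quantity, but unnormalized, so you must track $\eps$-dependent coefficients, which is where the error crept in. Your treatment of (\ref{04.2}) via the factored form of $\rho_\eps'$ is essentially the paper's argument.
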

\proof
  We differentiate to see that
  \be{4.23}
	\rho_\eps'(\xi)=\frac{3(\xi+1)^2}{1+\eps\xi} - \frac{\eps(\xi+1)^3}{(1+\eps\xi)^2}
	\qquad \mbox{and} \qquad
	\rho_\eps''(\xi)=\frac{6(\xi+1)}{1+\eps\xi} - \frac{6\eps(\xi+1)^2}{(1+\eps\xi)^2} + \frac{2\eps^2(\xi+1)^3}{(1+\eps\xi)^3}
  \ee
  for all $\xi\ge 0$, and that thus (\ref{04.2}) follows upon observing that
  \bas
	\frac{\eps(\xi+1)^3}{(1+\eps\xi)^2}
	= \frac{(\xi+1)^2}{1+\eps\xi} \cdot \frac{\eps\xi+\eps}{1+\eps\xi}
	\le \frac{(\xi+1)^2}{1+\eps\xi}
	\qquad \mbox{for all } \xi\ge 0.
  \eas
  Since
  \bas
	\frac{6\eps(\xi+1)^2}{(1+\eps\xi)^2}
	\le \frac{2\eps^2(\xi+1)^3}{(1+\eps\xi)^3} + \frac{9(\xi+1)}{2(1+\eps\xi)}
	\qquad \mbox{for all } \xi\ge 0
  \eas
  by Young's inequality, and since
  \bas
	\frac{\frac{2\eps^2(\xi+1)^3}{(1+\eps\xi)^3}}{\frac{6(\xi+1)}{1+\eps\xi}}
	= \frac{\eps^2(\xi+1)^2}{3(1+\eps\xi)^2}
	= \frac{(\eps+\eps\xi)^2}{3(1+\eps\xi)^2}
	\le \frac{1}{3}
	\qquad \mbox{for all } \xi\ge 0,
  \eas
  from (\ref{4.23}) we moerover obtain (\ref{04.3}).
\qed
In conjunction with an inequality describing the evolution of $t\mapsto \io (\veps+1)^3$ for $\eps\in (0,1)$ (see (\ref{4.4}),
a particular version of (\ref{33.1}) can be used to establish a first collection of estimates which can be viewed
as approximate counterparts of corresponding properties addressed in
\cite[Lemma 3.2]{taowin_JMPA} for the unperturbed problem (\ref{0}):
\begin{lem}\label{lem4}
  For every $K>0$ there exists $\Gamma_1(K)>0$ such that if (\ref{init}) and (\ref{K}) hold, then
  \be{4.1}
	\io \Big( \frac{\ueps(\cdot,t)}{1+\eps\ueps(\cdot,t)}\Big)^3 \le \Gamma_1(K)
	\qquad \mbox{for all $t\in (0,\tme)$ and } \eps\in (0,1)
  \ee
  and
  \be{4.00}
	\io \weps^2(\cdot,t) \le \Gamma_1(K)
	\qquad \mbox{for all $t\in (0,\tme)$ and } \eps\in (0,1)
  \ee
  as well as
  \be{4.01}
	\int_t^{t+\tau_\eps} \io |\na\weps|^2  \le \Gamma_1(K)
	\qquad \mbox{for all $t\in (0,\tme-\tau_\eps)$ and } \eps\in (0,1)
  \ee
  and
  \be{4.02}
	\int_t^{t+\tau_\eps} \io |\na\veps|^2  \le \Gamma_1(K)
	\qquad \mbox{for all $t\in (0,\tme-\tau_\eps)$ and } \eps\in (0,1),
  \ee
  where for $\eps\in (0,1)$ we have set $\tau_\eps:=\min\{1,\frac{1}{2}\tme\}$.
\end{lem}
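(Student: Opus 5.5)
The plan is to run a quasi-energy argument for the combined functional
\[
	y_\eps(t):=\io \rho_\eps(\weps(\cdot,t)) + b\io (\veps(\cdot,t)+1)^3,
	\qquad \rho_\eps(\xi)=\frac{(\xi+1)^3}{1+\eps\xi} \mbox{ as in Lemma \ref{lem04}},
\]
with a suitably large constant $b>0$ depending only on $D$ and $d$. Both gradient bounds then come out of the dissipated quantities. Since $\rho_\eps'\ge 0$ and $\rho_\eps''\ge 0$ by Lemma \ref{lem04}, Lemma \ref{lem33} applies. Combined with (\ref{04.2}) and (\ref{04.3}) it gives
\[
	\frac{d}{dt}\io\rho_\eps(\weps)
	+ \frac{3D}{4}\io \frac{\weps+1}{1+\eps\weps}|\na\weps|^2
	\le \frac{4(d-D)^2}{D}\io\frac{\weps+1}{1+\eps\weps}|\na\veps|^2
	+ 3\io\frac{(\weps+1)^3}{1+\eps\weps}.
\]
Separately, testing the second equation of (\ref{0eps}) by $3(\veps+1)^2$ and writing it as $v_{\eps t}=\na\cdot\big((d+\frac{\ueps}{1+\eps\ueps})\na\veps\big)-\veps+\frac{\ueps}{1+\eps\ueps}$ yields
\[
	\frac{d}{dt}\io(\veps+1)^3 + 6\io(\veps+1)\Big(d+\frac{\ueps}{1+\eps\ueps}\Big)|\na\veps|^2
	\le 3\io(\veps+1)^2\frac{\ueps}{1+\eps\ueps}.
\]
This is the inequality I take (\ref{4.4}) to refer to.

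Next, the cross terms should be absorbed by elementary pointwise comparisons. First, $\frac{\weps+1}{1+\eps\weps}\le \frac{\ueps}{1+\eps\ueps}+(\veps+1)$, so the bad gradient term in the $\weps$ inequality is dominated by $c\io(\veps+1)(d+\frac{\ueps}{1+\eps\ueps})|\na\veps|^2$. It is therefore absorbed once $b$ is large. Second, $\frac{\weps+1}{1+\eps\weps}$ is a mediant of $\frac{\ueps+1}{1+\eps\ueps}$ and $\frac{\veps}{\eps\veps}=\frac1\eps$, the first of which is at most $\frac{1}{\eps}$. Hence $\frac{\weps+1}{1+\eps\weps}\ge\frac{\ueps+1}{1+\eps\ueps}$, and so the source term $3\io(\veps+1)^2\frac{\ueps}{1+\eps\ueps}$ is bounded by $3\io\frac{(\weps+1)^3}{1+\eps\weps}$. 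Likewise $\io(\veps+1)^3\le \io\frac{(\weps+1)^3}{1+\eps\weps}$ and $\io\rho_\eps(\weps)=\io\frac{(\weps+1)^3}{1+\eps\weps}$. Altogether this gives, for some $c_1=c_1(b)$,
\[
	y_\eps'+y_\eps + \frac{3D}{8}\io\frac{\weps+1}{1+\eps\weps}|\na\weps|^2 + bd\io|\na\veps|^2
	\le c_1\io\frac{(\weps+1)^3}{1+\eps\weps}.
\]

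The main step is then Lemma \ref{lem1}, applied with $\vp=\weps$ and $\eta=\frac{3D}{8c_1}$. The mass of $\weps$ is controlled by (\ref{mass}) and (\ref{K}), so $\io(\weps+1)\le 3K|\Om|+|\Om|$ and the right-hand side is absorbed up to a constant. This produces $y_\eps'+y_\eps+\frac{3D}{16}\io\frac{\weps+1}{1+\eps\weps}|\na\weps|^2+bd\io|\na\veps|^2\le c_2(K)$. Since $y_\eps(0)$ is bounded in terms of $K$, an ODE comparison gives $y_\eps\le c_3(K)$ on $(0,\tme)$. Integrating over $(t,t+\tau_\eps)$ with $\tau_\eps\le 1$, and using $\frac{\weps+1}{1+\eps\weps}\ge 1$, yields (\ref{4.01}) and (\ref{4.02}).

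Finally, (\ref{4.1}) and (\ref{4.00}) follow pointwise. Indeed, $\big(\frac{\ueps}{1+\eps\ueps}\big)^3\le(\ueps+1)^2\frac{\ueps+1}{1+\eps\ueps}\le\frac{(\weps+1)^3}{1+\eps\weps}$, and $\weps^2\le\frac{(\weps+1)^3}{1+\eps\weps}$. The delicate point I expect is only the choice of $b$ relative to $(d-D)^2/D$, together with checking the pointwise inequalities uniformly in $\eps$. If needed, (\ref{4.1}) together with Corollary \ref{cor_moser} (with $q=3>\frac n2$ since $n\le5$) would later upgrade this to an $L^\infty$ bound for $\veps$.
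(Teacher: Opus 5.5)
Your architecture matches the paper's: the same functional, Lemma \ref{lem33} with Lemma \ref{lem04}, absorption of the cross term via a lower bound for $(\veps+1)\big(d+\frac{\ueps}{1+\eps\ueps}\big)$, then Lemma \ref{lem1} with the mass bound and an ODE comparison. Your estimate $\frac{\weps+1}{1+\eps\weps}\le\frac{\ueps}{1+\eps\ueps}+\veps+1$ is correct and amounts to the paper's $\big(d+\frac{\ueps}{1+\eps\ueps}\big)(\veps+1)\ge\min\{d,1\}\frac{\weps+1}{1+\eps\weps}$. Your pointwise bound on the source term and your final deductions of (\ref{4.1})--(\ref{4.02}) are also fine.

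There is, however, a genuine gap where you produce $+y_\eps$ on the left. The claimed inequality $\io(\veps+1)^3\le\io\frac{(\weps+1)^3}{1+\eps\weps}$ is false. For $u_0\equiv 0$ you have $\ueps\equiv 0$ and $\weps=\veps$, and $\frac{(\veps+1)^3}{1+\eps\veps}<(\veps+1)^3$ wherever $\veps>0$. Monotonicity of $\xi\mapsto\frac{\xi+1}{1+\eps\xi}$ only yields $\frac{(\veps+1)^3}{1+\eps\veps}\le\frac{(\weps+1)^3}{1+\eps\weps}$, and $\io\frac{(\weps+1)^3}{1+\eps\weps}$ does not control $\io(\veps+1)^3$ uniformly in $\eps$. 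Consequently your differential inequality contains no dissipation for $b\io(\veps+1)^3$. Lemma \ref{lem1} can regenerate $\io\rho_\eps(\weps)$ from the $\weps$-gradient term, but not the $\veps$-part. You therefore only obtain $y_\eps(t)\le y_\eps(0)+ct$, not a bound depending on $K$ alone.

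The loss occurs when you test by $3(\veps+1)^2$ and discard $-3\io(\veps+1)^2\veps$; this is precisely the term the paper keeps. Write $-3\io(\veps+1)^2\veps=-3\io(\veps+1)^3+3\io(\veps+1)^2$ and use $3\io(\veps+1)^2\le 2\io(\veps+1)^3+|\Om|$. This gives
\[
\frac{d}{dt}\io(\veps+1)^3+\io(\veps+1)^3+6\io(\veps+1)\Big(d+\frac{\ueps}{1+\eps\ueps}\Big)|\na\veps|^2\le|\Om|+3\io(\veps+1)^2\frac{\ueps}{1+\eps\ueps},
\]
after which the rest of your argument closes. The paper instead applies Young's inequality to $\io(\veps+1)^2\big(1+\frac{\ueps}{1+\eps\ueps}\big)$, to the same effect.

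Alternatively, you could extract the missing term from your $\veps$-gradient dissipation. Note that $6d\io(\veps+1)|\na\veps|^2=\frac{8d}{3}\io|\na(\veps+1)^{3/2}|^2$. Then (\ref{1.2}), applied to $\psi=(\veps+1)^{3/2}$ together with (\ref{mass}), bounds $\io(\veps+1)^3$ by a multiple of this plus a constant depending on $K$.
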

\proof
  For $\eps\in (0,1)$ we let $\rho_\eps$ be as in Lemma \ref{lem04}, and drawing on the left inequalities in (\ref{04.2}) and
  (\ref{04.2}) we may employ Lemma \ref{lem33} to see that, again thanks to (\ref{04.2}) and (\ref{04.3}),
  \bea{4.3}
  \hspace*{-6mm}
	\frac{d}{dt} \io \rho_\eps(\weps)
	&\le& - \frac{D}{2} \io \rho_\eps''(\weps) |\na\weps|^2
	+ \frac{(d-D)^2}{2D} \io \rho_\eps''(\weps) |\na\veps|^2
	+ \io (\weps+1)\rho_\eps'(\weps) \nn\\
	&\le& - \frac{3D}{4} \io \frac{\weps+1}{1+\eps\weps} |\na\weps|^2
	+ \frac{4(d-D)^2}{D} \io \frac{\weps+1}{1+\eps\weps} |\na\veps|^2
	+ 3 \io \frac{(\weps+1)^3}{1+\eps\weps}
  \eea
  for all $t\in (0,\tme)$.
  We next abbreviate $d_0:=\min\{d,1\}$ and observe that since $\ueps\le\weps$ and thus
  $\frac{1}{1+\eps\ueps} \ge \frac{1}{1+\eps\weps}$,
  \bas
	\Big(d+\frac{\ueps}{1+\eps\ueps}\Big) \cdot (\veps+1)
	&\ge& \Big(d+\frac{\ueps}{1+\eps\weps}\Big) \cdot (\veps+1) \\
	&=& \frac{(d+d\eps\weps+\ueps)(\veps+1)}{1+\eps\weps} \\
	&\ge& \frac{(d+\ueps)(\veps+1)}{1+\eps\weps} \\
	&=& \frac{d\veps+d+\ueps\veps+\ueps}{1+\eps\weps} \\
	&\ge& \frac{d\veps+d+\ueps}{1+\eps\weps} \\
	&\ge& \frac{d_0 \veps + d_0 + d_0\ueps}{1+\eps\weps} \\
	&=& d_0 \cdot \frac{\weps+1}{1+\eps\weps}
	\quad \mbox{in } \Om\times (0,\tme).
  \eas
  Testing the seond equation in (\ref{0eps}) by $(\veps+1)^2$ we thus obtain that
  \bea{4.4}
	& & \hs{-20mm}
	\frac{1}{3} \frac{d}{dt} \io (\veps+1)^3 \nn\\
	&=& \io (\veps+1)^2 \na\cdot \Big\{ \Big(d+\frac{\ueps}{1+\eps\ueps}\Big)\na\veps\Big\}
	- \io (\veps+1)^2 \veps
	+ \io (\veps+1)^2 \cdot \frac{\ueps}{1+\eps\ueps} \nn\\
	&=& - 2 \io \Big(d+\frac{\ueps}{1+\eps\ueps}\Big)\cdot (\veps+1) |\na\veps|^2
	- \io (\veps+1)^3
	+ \io (\veps+1)^2 \cdot \Big(1+\frac{\ueps}{1+\eps\ueps}\Big) \nn\\
	&\le& - 2 d_0 \io \frac{\weps+1}{1+\eps\weps} |\na\veps|^2
	- \io (\veps+1)^3
	+ \io (\veps+1)^2 \cdot \Big(1+\frac{\ueps}{1+\eps\ueps}\Big)
  \eea
  for all $t\in (0,\tme)$.
  Here, using that $\frac{d}{d\xi} \frac{\xi}{1+\eps\xi}\ge 0$ for all $\xi\ge 0$ we can estimate
  \bas
	1+\frac{\ueps}{1+\eps\ueps}
	\le 1 + \frac{\weps}{1+\eps\weps}
	= \frac{1+(\eps+1)\weps}{1+\eps\weps}
	\le 2\cdot \frac{\weps+1}{1+\eps\weps}
	\qquad \mbox{in } \Om\times (0,\tme),
  \eas
  and employ Young's inequality to see that, accordingly,
  \bas
	\io (\veps+1)^2 \cdot\Big(1+\frac{\ueps}{1+\eps\ueps}\Big)
	&\le& \frac{2}{3} \io (\veps+1)^3
	+ \frac{1}{3} \io \Big(1+\frac{\ueps}{1+\eps\ueps}\Big)^3 \\
	&\le& \frac{2}{3} \io (\veps+1)^3
	+ \frac{8}{3} \io \Big(\frac{\weps+1}{1+\eps\weps}\Big)^3 \\
	&\le& \frac{2}{3} \io (\veps+1)^3
	+ \frac{8}{3} \io \frac{(\weps+1)^3}{1+\eps\weps}
	\qquad \mbox{for all } t\in (0,\tme).
  \eas
  Writing $b:=\frac{4(d-D)^2+D}{6d_0 D}$ and recalling (\ref{04.1}), from (\ref{4.3}) and (\ref{4.4}) we hence infer that
  \bea{4.5}
	& & \hs{-20mm}
	\frac{d}{dt} \bigg\{
	\io \frac{(\weps+1)^3}{1+\eps\weps}
	+ b\io (\veps+1)^3 \bigg\}
	+ \io \frac{\weps+1}{1+\eps\weps} |\na\veps|^2
	+ \frac{3D}{4} \io \frac{\weps+1}{1+\eps\weps} |\na\weps|^2 \nn\\
	& & \hs{-4mm}
	+ \io \frac{(\weps+1)^3}{1+\eps\weps}
	+ b\io (\veps+1)^3 \nn\\
	&\le& (4+8b) \io \frac{(\weps+1)^3}{1+\eps\weps}
	\qquad \mbox{for all } t\in (0,\tme).
  \eea
  Now Lemma \ref{lem1} says that if we let $c_1:=(4+8b)\Lam_1\Big(\frac{3D}{8(4+8b)}\Big)$ with $\Lam_1(\cdot)$ as found there, then
  \bas
	(4+8b)\io \frac{(\weps+1)^3}{1+\eps\weps}
	\le \frac{3D}{8} \io \frac{\weps+1}{1+\eps\weps} |\na\weps|^2
	+ c_1\cdot \bigg\{ \io (\weps+1)\bigg\}^3
	\qquad \mbox{for all } t\in (0,\tme),
  \eas
  so that since
  \bas
	\io (\weps+1) = \io \ueps + \io \veps + |\Om|
	\le \io u_0 + \max \bigg\{ \io u_0 \, , \, \io v_0\bigg\} + |\Om|
	\le c_2\equiv c_2(K):=2K|\Om| + |\Om|
  \eas
  by (\ref{w}), (\ref{mass}) and (\ref{K}), we conclude that
  \bas
	\yeps(t):=\io \frac{(\weps+1)^3}{1+\eps\weps}
	+ b\io (\veps+1)^3,
	\qquad t\in [0,\tme), \ \eps\in (0,1),
  \eas
  as well as
  \bas
	\heps(t):=
	\io \frac{\weps+1}{1+\eps\weps} |\na\veps|^2
	+ \frac{3D}{8} \io \frac{\weps+1}{1+\eps\weps} |\na\weps|^2
	\qquad t\in (0,\tme), \ \eps\in (0,1),
  \eas
  satisfy
  \be{4.99}
	\yeps'(t) + \yeps(t) + \heps(t)
	\le c_1 c_2^3
	\qquad \mbox{for all } t\in (0,\tme).
  \ee
  Again relying on (\ref{K}), by means of an ODE comparison argument and an integration we obtain from this that	
  \bas
	\yeps(t)
	&\le& \max \bigg\{ c_1 c_2^3 \, , \, \io \frac{(u_0+v_0+1)^3}{1+\eps(u_0+v_0)} + b\io (v_0+1)^3 \bigg\} \\
	&\le& c_4\equiv c_4(K) := \max \Big\{ c_1 c_2^3 \, , \, (2K+1)^3 |\Om| + b\cdot (K+1)^3 |\Om| \Big\}
	\qquad \mbox{for all } t\in [0,\tme),
  \eas
  and that thus, by an integration in (\ref{4.99}),
  \bas
	\int_t^{t+\tau_\eps} \heps(s) ds
	\le \yeps(t) + c_1 c_2^3 \tau_\eps
	\le c_4 + c_1 c_2^3
	\qquad \mbox{for all $t\in (0,\tme-\tau_\eps)$,}
  \eas
  because $\tau_\eps\le 1$ for all $\eps\in (0,1)$.
  According to our definitions of $(\yeps)_{\eps\in (0,1)}$ and $(\heps)_{\eps\in (0,1)}$,
  the claim thus readily follows upon observing that in line with (\ref{04.2}) and the inequality $\ueps\le\weps$ we have
  \bas
	\io \Big(\frac{\ueps}{1+\eps\ueps}\Big)^3
	\le \io \frac{(\ueps+1)^3}{1+\eps\ueps}
	= \io \rho_\eps(\ueps)
	\le \io \rho_\eps(\weps)
	= \io \frac{(\weps+1)^3}{1+\eps\weps}
	\qquad \mbox{for all } t\in (0,\tme),
  \eas
  and that $\frac{\weps+1}{1+\eps\weps} \ge 1$ as well as $\frac{(\weps+1)^3}{1+\eps\weps} \ge \weps^2$
  due to the fact that $\frac{\xi+1}{1+\eps\xi}\ge 1$ for all $\xi\ge 0$.
\qed
The actually most important implication of this section can now be achieved by drawing on the time-independent bound in $L^3(\Om)$
stated in (\ref{4.1}) for the source term $\frac{\ueps}{1+\eps\ueps}$ in the second equation in (\ref{0eps}).
Indeed, Corollary \ref{cor_moser} says that in low-dimensional scenarios in which $\frac{n}{2}<3$,
this is sufficient to ensure $L^\infty$ bounds for the $\veps$:
\begin{lem}\label{lem5}
  For all $K>0$ there exists $M=M(K)>0$ such that if (\ref{init}) and (\ref{K}) are valid, it follows that
  \be{5.1}
	\|\veps(\cdot,t)\|_{L^\infty(\Om)} \le M
	\qquad \mbox{for all $t\in (0,\tme)$ and } \eps\in (0,1).
  \ee
\end{lem}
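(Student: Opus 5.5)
We apply Corollary \ref{cor_moser} with $z:=\veps$, $T:=\tme$ and $q:=3$. The restriction $n\le 5$ gives $q=3>\frac{n}{2}$.

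The second equation in (\ref{0eps}) can be written as
\bas
	v_{\eps t} = \na\cdot\big(a_\eps(x,t)\na\veps\big) - \veps + \feps(x,t),
	\qquad
	a_\eps:=d+\frac{\ueps}{1+\eps\ueps},
	\quad
	\feps:=\frac{\ueps}{1+\eps\ueps}.
\eas
Since $\veps\ge 0$, we may drop $-\veps$ and obtain
\bas
	v_{\eps t}\le \na\cdot(a_\eps\na\veps)+\feps,
\eas
together with $\frac{\pa\veps}{\pa\nu}=0$ on $\pO$. Here $\feps\ge 0$ and $a_\eps\ge d$. The Moser corollary asks for $a\in C^1(\bom\times(0,T))$; this follows from the regularity $\ueps\in C^{2,1}(\bom\times(0,\tme))$ in Lemma \ref{lem_loc}, and $\feps$ is continuous for the same reason. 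We fix $L=L(K):=\max\{\frac{1}{d},\Gamma_1(K)^{1/3}\}$. Then (\ref{a}) holds, and (\ref{f}) with $q=3$ is exactly (\ref{4.1}) from Lemma \ref{lem4}:
\bas
	\|\feps(\cdot,t)\|_{L^3(\Om)}\le \Gamma_1(K)^{1/3}\le L
	\qquad\mbox{for all } t\in(0,\tme) \mbox{ and } \eps\in(0,1).
\eas

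It remains to bound the quantities on the right of (\ref{m3}) independently of $\eps$. By (\ref{K}), $\|v_0+1\|_{L^\infty(\Om)}\le K+1$. By (\ref{mass}) and (\ref{K}),
\bas
	\|\veps(\cdot,s)+1\|_{L^1(\Om)}\le \max\Big\{\io u_0,\io v_0\Big\}+|\Om|\le (K+1)|\Om|
	\qquad\mbox{for all } s\in(0,\tme).
\eas
Corollary \ref{cor_moser} therefore yields (\ref{5.1}) with $M(K):=\Lam_4(3,L(K))\cdot\max\{K+1,(K+1)|\Om|\}$.

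I expect no real obstacle, since the substantive work is already contained in the $\eps$-uniform $L^3$ bound of Lemma \ref{lem4}. The only points to check are that the diffusion coefficient $a_\eps$ has the required $C^1$ regularity up to the boundary in the interior time interval, and that the continuity of $\veps$ at $t=0$ required in the corollary is provided by $\veps\in C^0([0,\tme);W^{1,p}(\Om))$ with $p>n$.
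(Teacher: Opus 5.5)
Your proposal is correct and takes the same approach as the paper. The paper also applies Corollary \ref{cor_moser} with $q=3$, $a_\eps=d+\frac{\ueps}{1+\eps\ueps}$, $\feps=\frac{\ueps}{1+\eps\ueps}$ and $L=\max\{\frac{1}{d},\Gamma_1^{1/3}\}$, using (\ref{4.1}) for the $L^3$ bound and (\ref{mass}), (\ref{K}) for the remaining terms, and reaches the same constant $M(K)=\Lam_4(3,L)\,(K+1)\max\{1,|\Om|\}$.
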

\proof
  If (\ref{init}) and (\ref{K}) hold, then according to the nonnegativity of the $\veps$, (\ref{0eps}) implies that
  \bas
	v_{\eps t} \le \na\cdot \big(a_\eps(x,t)\na\veps\big) + \feps(x,t)
	\quad \mbox{in } \Om\times (0,\tme)
	\qquad \mbox{for all } \eps\in (0,1),
  \eas
  where $a_\eps:=d+\frac{\ueps}{1+\eps\ueps}$ and $\feps:=\frac{\ueps}{1+\eps\ueps}$ satisfy
  $a_\eps\ge d$, $\feps\ge 0$ and, by (\ref{4.1}),
  \bas
	\io \feps^3 \le \Gamma_1
	\qquad \mbox{for all $t\in (0,\tme)$ and } \eps\in (0,1),
  \eas
  with $\Gamma_1:=\Gamma_1(K)$ and $\Gamma_1(\cdot)$ taken from Lemma \ref{lem4}.
  Using that $3>\frac{n}{2}$, we may thus draw on Corollary \ref{cor_moser} to see with with $\Lam_4(\cdot,\cdot)$ as introduced
  there, thanks to (\ref{mass}) and (\ref{K}) we have
  \bas
	\|\veps(\cdot,t)\|_{L^\infty(\Om)}
	&\le& \Lam_4\Big(3,\max\Big\{\frac{1}{d},\Gamma_1^\frac{1}{3} \Big\} \Big) \cdot
	\max \bigg\{ \|v_0+1\|_{L^\infty(\Om)} \, , \, \sup_{s\in (0,T)} \|v(\cdot,s)+1\|_{L^1(\Om)} \bigg\} \\
	&\le& \Lam_4\Big(3,\max\Big\{\frac{1}{d},\Gamma_1^\frac{1}{3} \Big\} \Big) \cdot (K+1)\max\{1, \, |\Om|\}
  \eas
  for all $t\in (0,\tme)$ and $\eps\in (0,1)$.
\qed
\mysection{Global solvability of (\ref{0eps})}
Unlike in the original problem (\ref{0}), $L^\infty$ bounds for the second components of the solutions to the non-degenerate
regularized variants (\ref{0eps}) already entail higher regularity features.
The key toward this can be verified by reduction to standard literature on parabolic regularity theory:
\begin{lem}\label{lem6}
  Suppose that (\ref{init}) holds, and that $\eps\in (0,1)$ is such that $\tme<\infty$.
  Then there exist $\theta=\theta(\eps,u_0,v_0)\in (0,1)$ and $C(\eps,u_0,v_0)>0$ such that
  \be{6.1}
	\|\veps\|_{C^{\theta,\frac{\theta}{2}}(\bom\times [0,T])}
	\le C(\eps,u_0,v_0)
	\qquad \mbox{for all $T\in (0,\tme)$ and } \eps\in (0,1).
  \ee
\end{lem}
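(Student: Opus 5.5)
The plan is to treat the second equation in (\ref{0eps}) as a linear parabolic equation in divergence form for $\veps$,
\[
	v_{\eps t} = \na\cdot\big(a_\eps(x,t)\na\veps\big) + g_\eps(x,t),
	\qquad
	a_\eps:=d+\frac{\ueps}{1+\eps\ueps},
	\quad
	g_\eps:=-\veps+\frac{\ueps}{1+\eps\ueps},
\]
with homogeneous Neumann data. Its coefficient and source are controlled uniformly in time because of the regularization. Indeed, $d\le a_\eps\le d+\frac{1}{\eps}$ in $\Om\times(0,\tme)$, so the equation is uniformly parabolic with ellipticity constants depending only on $d$ and $\eps$. Moreover, by Lemma \ref{lem5} we have $\|\veps\|_{L^\infty(\Om\times(0,\tme))}\le M(K)$ with $K:=\|u_0\|_{L^\infty(\Om)}+\|v_0\|_{L^\infty(\Om)}$. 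Since $0\le\frac{\ueps}{1+\eps\ueps}\le\frac{1}{\eps}$, the source satisfies $|g_\eps|\le M+\frac{1}{\eps}$. The essential point is that no regularity of $a_\eps$ is needed, only measurability and the two-sided bounds. This is exactly the setting of the De Giorgi--Nash--Moser-type Hölder estimates for bounded weak solutions of parabolic equations in divergence form with bounded measurable coefficients, up to the boundary under conormal (Neumann) conditions. A convenient reference is the result of Porzio and Vespri on Hölder estimates for local solutions of degenerate parabolic equations; the version in Ladyzhenskaya--Solonnikov--Ural'ceva would serve equally well.

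The steps, in order, are as follows.
\begin{enumerate}
\item Record that $\veps$ is a classical, hence weak, solution on $\Om\times(0,T)$ for every $T<\tme$, with the bounds on $a_\eps$, $g_\eps$ and $\|\veps\|_{L^\infty}$ listed above. None of these bounds depends on $T$.
\item Check the structural hypotheses of the cited Hölder theorem. Writing the flux as $A(x,t,\na v)=a_\eps\na v$ and the lower-order term as $B=g_\eps$, we have $A\cdot\na v\ge d|\na v|^2$, $|A|\le (d+\frac{1}{\eps})|\na v|$ and $|B|\le M+\frac{1}{\eps}$. Smoothness of $\pO$ is used for the estimate up to the boundary.
\item Deal with $t=0$. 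Since $v_0\in W^{1,\infty}(\Om)$ is in particular Hölder continuous, the Hölder estimate extends to the closed cylinder down to $t=0$.
\item Conclude that there exist $\theta\in(0,1)$ and $C>0$, depending only on $d$, $\eps$, $M$, $\|v_0\|_{W^{1,\infty}(\Om)}$ and $\Om$, such that (\ref{6.1}) holds. These constants are independent of $T\in(0,\tme)$ because every input bound is global on $(0,\tme)$. We then simply consider $\eps$ fixed as in the statement.
\end{enumerate}

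The main obstacle I expect is the uniformity in $T$, in particular near $t=0$ and near the boundary. Interior-in-time Hölder estimates on cylinders of unit length follow directly. Near $t=0$, however, one needs the version of the theorem that incorporates Hölder continuous initial data. Near $\pO$, one needs either the conormal boundary version or a flattening and reflection argument, which is admissible because the equation has divergence form with merely measurable coefficients. Covering $[0,T]$ by finitely many overlapping time windows of fixed length, with constants independent of the window, then yields a bound independent of $T$. The condition $\tme<\infty$ plays no role beyond indicating the intended use: the lemma will feed into the extensibility criterion (\ref{ext}).
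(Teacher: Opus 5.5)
Your proposal is correct and follows the paper's proof essentially verbatim. The paper likewise rewrites the second equation of (\ref{0eps}) in divergence form with $d\le a_\eps\le d+\frac{1}{\eps}$ and $|f_\eps|\le M+\frac{1}{\eps}$ (using Lemma \ref{lem5}), then invokes the H\"older regularity theory of Porzio and Vespri for bounded solutions; your remarks on uniformity in $T$, on $t=0$ and on the Neumann boundary just make explicit what the paper leaves to that cited result.
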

\proof
  The second equation in (\ref{0eps}) can be recast according to
  \bas
	v_{\eps t}= \na\cdot \big( a_\eps(x,t)\na\veps\big) + \feps(x,t),
	\qquad x\in\Om, \ t\in (0,\tme),
  \eas
  with $a_\eps:=d+\frac{\ueps}{1+\eps\ueps}$ and $\feps:=-\veps+\frac{\ueps}{1+\eps\ueps}$ satisfying
  \bas
	d \le a_\eps \le d+\frac{1}{\eps}
	\quad \mbox{and} \quad
	|\feps| \le M + \frac{1}{\eps}
	\qquad \mbox{in } \Om\times (0,\tme)
  \eas
  by Lemma \ref{lem5}, where $M=M(K)$ with $K:=\|u_0\|_{L^\infty(\Om)} + \|v_0\|_{L^\infty(\Om)}$.
  Again explicitly relying on Lemma \ref{lem5}, we therefore obtain the claim as an immediate consequence of
  known results on H\"older regularity of bounded solutions to scalar parabolic problems (\cite{porzio_vespri}).
\qed
When combined with (\ref{4.00}), the information on time-independent H\"older regularity of the $\veps$ contained in (\ref{6.1})
can be seen to imply an $L^\infty$ bound for the first component in (\ref{0eps}):
\begin{lem}\label{lem60}
  Let (\ref{init}) hold, and let $\eps\in (0,1)$ be such that $\tme<\infty$.
  Then there exists $C(\eps,u_0,v_0)>0$ fulfilling
  \be{60.1}
	\|\ueps(\cdot, t)\|_{L^\infty(\Om)}
	\le C(\eps,u_0,v_0)
	\qquad \mbox{for all $t\in (0,\tme)$ and } \eps\in (0,1).
  \ee
\end{lem}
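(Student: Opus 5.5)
We want an $L^\infty$ bound for $\ueps$ on $(0,\tme)$ for fixed $\eps$, given that $\tme<\infty$. The idea is to use the first equation in (\ref{0eps}) as a linear problem for $\ueps$ with a bounded drift, and to extract regularity of $\na\veps$ from the second equation, where we know $\veps$ is Hölder continuous (Lemma \ref{lem6}).

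The key observation is that the drift is already controlled in $L^\infty$: $\frac{\ueps}{1+\eps\ueps}\le\frac1\eps$. So the second equation reads
\[
v_{\eps t}=\na\cdot\big(a_\eps\na\veps\big)+\feps ,
\]
with $d\le a_\eps\le d+\frac1\eps$ and $|\feps|\le M+\frac1\eps$. Since the diffusion coefficient $a_\eps$ itself depends on $\ueps$, we cannot expect Schauder estimates. Instead we aim for an $L^p$ bound on $\na\veps$ with $p>n+2$.

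\textbf{Step 1: gradient bound for $\veps$.} First consider $\ueps$ itself. Because $\frac{\ueps}{1+\eps\ueps}\na\veps$ has modulus at most $\frac1\eps|\na\veps|$, parabolic $L^p$-regularity for the first equation (e.g.\ a semigroup representation $\ueps(t)=e^{tD\Del}u_0-\int_0^te^{(t-s)D\Del}\na\cdot(\dots)$) gives
\[
\|\ueps(t)\|_{L^\infty}\le C\Big(1+\sup_s\|\na\veps(s)\|_{L^p}\Big)\quad\mbox{for any } p>n,
\]
using $\|e^{\tau\Del}\na\cdot\|_{L^p\to L^\infty}\le C\tau^{-\frac12-\frac n{2p}}$ and $\tme<\infty$. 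This shows it suffices to bound $\na\veps$ in $L^p$. For that:

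\textbf{(a)} Test the second equation by $-\Del\veps$, or use an energy estimate. Since $a_\eps$ is not smooth, I would rather use the Hölder bound of Lemma \ref{lem6} in a Meyers / Gehring-type higher-integrability argument. This yields $\na\veps\in L^{2+\delta}$ uniformly, with $\delta>0$ depending on $\eps$.

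\textbf{(b)} Bootstrap by alternating between the two equations. From $\na\veps\in L^{2+\delta}$ in space-time, the first equation gives $\ueps\in L^r$ with $r$ larger. The second equation, rewritten as
\[
v_{\eps t}=d\Del\veps+\na\cdot\big(\tfrac{\ueps}{1+\eps\ueps}\na\veps\big)+\feps ,
\]
has a divergence-form perturbation bounded by $\frac1\eps|\na\veps|$. That perturbation is only as good as $\na\veps$ itself, so this alone does not improve integrability.

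\textbf{Main obstacle.} The main obstacle is exactly (b): obtaining $\na\veps\in L^\infty((0,\tme);L^p)$ for some $p>n$ when $a_\eps$ is merely bounded and measurable. The alternative I would pursue is to transfer regularity through $\weps$. By Lemma \ref{lem_w}, $\weps$ satisfies
\[
w_{\eps t}=D\Del\weps+(d-D)\Del\veps-\veps+\tfrac{\ueps}{1+\eps\ueps},
\]
where now the diffusion coefficients are constants. Since $\veps$ is Hölder continuous with bounded source data, maximal regularity applied to the linear heat operator, with the $\Del\veps$ forcing handled via $\Del\veps=\frac{1}{d}\big(v_{\eps t}-\na\cdot(\dots)\dots\big)$, is still circular.

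I would therefore try a more direct route. Test the first equation by $\ueps^{p-1}$:
\[
\frac1p\frac{d}{dt}\io\ueps^p+\frac{D(p-1)}{2}\io\ueps^{p-2}|\na\ueps|^2\le\frac{p-1}{2D\eps^2}\io\ueps^{p-2}|\na\veps|^2 .
\]
To close this, control $\io\ueps^{p-2}|\na\veps|^2$ by testing the second equation with $\ueps^{p-2}\veps$-type functions. Here the Hölder continuity of $\veps$ allows localization: on small balls $\veps$ is nearly constant, so that $\io\ueps^{p-2}|\na\veps|^2$ can be absorbed via a Caccioppoli inequality.

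\textbf{Step 2: $L^\infty$ bound for $\ueps$.} Once $\sup_t\|\ueps\|_{L^p}$ is bounded for all finite $p$, the argument closes. Taking $p>n$ in Lemma \ref{lem_moser} (or Moser iteration on the first equation, with $\na\veps$ bounded in $L^2$ weighted against $\ueps^{p-2}$) gives (\ref{60.1}).
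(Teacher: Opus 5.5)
Your proposal does not reach the statement. The reduction in Step 1 is correct, but it replaces the claim by the much harder bound $\sup_{t}\|\na\veps(\cdot,t)\|_{L^p(\Om)}<\infty$ with $p>n$. As you note yourself, this bound is out of reach for the merely bounded measurable coefficient $a_\eps$. In the paper such gradient bounds only come afterwards, in Lemma \ref{lem7}, and they use the present $L^\infty$ bound as an input.

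Your substitute, the localization/Caccioppoli sketch, is not an argument. Testing the second equation by $\ueps^{p-2}\veps$-type functions produces $\frac{d}{dt}$ of $\ueps^{p-2}$, hence the taxis term of the first equation again, together with cross terms carrying the factor $p-2$. Moreover, the radius on which the oscillation of $\veps$ must be small depends on $p$. Nothing in your sketch shows that this closes.

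Step 2 fails as well. Lemma \ref{lem_moser} needs $z_t\le\na\cdot(a\na z)+fz$ with $\|f\|_{L^q}$ bounded for some $q>\frac n2$. The flux $\frac{\ueps}{1+\eps\ueps}\na\veps$ in the first equation cannot be put into this form, nor handled by a Moser iteration with drift, without controlling $\na\veps$ in $L^q$ for some $q>n$ --- which is the very obstacle. Note also that for fixed $\eps$ the coefficient $\frac{\ueps}{1+\eps\ueps}\le\frac1\eps$ is already bounded, so $L^p$ bounds for $\ueps$ give no new information on $\veps$.

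The missing idea lies in the route you discarded as circular. In the equation (\ref{0w}) for $\weps$, whose diffusion rate is the constant $D$, you should not rewrite $\Del\veps$ through the $\veps$-equation. Instead, let the heat semigroup absorb the Laplacian. Since $\|\Del e^{\tau D\Del}\varphi\|_{L^\infty(\Om)}\le C\tau^{-1+\frac{\theta}{2}}\|\varphi\|_{C^\theta(\bom)}$, which is integrable near $\tau=0$, the uniform H\"older bound from Lemma \ref{lem6} yields
\[
\Big\|\int_0^t e^{(t-s)(D\Del-1)}\Del\veps(\cdot,s)\,ds\Big\|_{L^\infty(\Om)}\le c\sup_{s\in(0,t)}\|\veps(\cdot,s)\|_{C^\theta(\bom)} .
\]
This is exactly the estimate the paper imports.

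The rest is routine. Write $w_{\eps t}=(D\Del-1)\weps+(d-D)\Del\veps-\veps+\frac{\ueps}{1+\eps\ueps}+\weps$; a Duhamel representation and order preservation give $\weps(\cdot,t)\le\|u_0+v_0\|_{L^\infty(\Om)}+|d-D|\,c+2\int_0^te^{(t-s)(D\Del-1)}\weps(\cdot,s)\,ds$. Now use three ingredients: the $L^3\to L^\infty$ smoothing (integrable because $n\le5$), the bound on $\io\weps^2$ from Lemma \ref{lem4}, and $\|\weps\|_{L^3}\le\|\weps\|_{L^2}^{2/3}\|\weps\|_{L^\infty}^{1/3}$. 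With $A_\eps(T):=\max_{[0,T]}\|\weps\|_{L^\infty}$ this gives $A_\eps(T)\le c'+c''A_\eps^{1/3}(T)$, and $\ueps\le\weps$ concludes. For fixed $\eps$ and $\tme<\infty$, you could even just bound the source by $\frac{\ueps}{1+\eps\ueps}\le\frac1\eps$. No information on $\na\veps$ is needed at all.
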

\proof
  Lemma \ref{lem4} guarantees the existence of $c_1=c_1(u_0, v_0)>0$ such that
  \be{60.2}
 	\io \weps^2(\cdot, t) \le c_1
 	\qquad \mbox{for all $t\in (0,\tme)$ and } \eps\in (0,1),
  \ee
  while (\ref{6.1}) provides $c_2=c_2(\eps, u_0, v_0)$ such that
  \bas
 	\|\veps(\cdot, t)\|_{C^\theta(\bom)}\le c_2
 	\qquad \mbox{for all $t\in (0,\tme)$ and } \eps\in (0,1).
  \eas
  Together with \cite[Lemma 2.2]{taowin_JMPA}, the latter entails the existence of $c_3=c_3(D, \theta)>0$ satisfying
  \be{60.3}
	\Bigg\| \int_0^t e^{(t-s)(D\Del-1)} \Del\veps(\cdot,s) ds \Bigg\|_{L^\infty(\Om)}
	\le c_3\cdot\sup_{s\in (0,t)} \|\veps(\cdot,s)\|_{C^\theta(\bom)}
    	\le c_3\cdot c_2
  \ee
  for all $t\in (0,\tme)$ and $\eps\in (0,1)$. Since the first equation in (\ref{0w}) can be rewritten in the form
  \bas
 	w_{\eps t} = D\Del\weps -\weps + (d-D) \Del\veps - \veps + \frac{\ueps}{1+\eps\ueps} +\weps,
	\qquad & x\in\Om, \ t\in (0,\tme),
  \eas
  relying on order preservation of $(e^{\tau\Del})_{\tau\ge 0}$ and noting $\frac{\ueps}{1+\eps\ueps} \le \ueps \le \weps$
  we obtain from an associatd Duhamel representation that
  \bas
	\weps(\cdot,t)
	&=& e^{t(D\Del-1)} w_0
	+ (d-D) \int_0^t e^{(t-s)(D\Del-1)} \Del \veps(\cdot,s) ds \\
	& & - \int_0^t e^{(t-s)(D\Del-1)} \veps(\cdot,s) ds
	+ \int_0^t e^{(t-s)(D\Del-1)} \Big\{\frac{\ueps(\cdot, s)}{1+\eps\ueps(\cdot, s)} +\weps(\cdot,s)\Big\} ds \\
	&\le& \|w_0\|_{L^\infty(\Om)}
	+ (d-D) \int_0^t e^{(t-s)(D\Del-1)} \Del \veps(\cdot,s) ds  + 2\int_0^t e^{(t-s)(D\Del-1)}  \weps(\cdot,s) ds
	\quad \mbox{in } \Om
  \eas
  for all $t\in (0,\tme)$ and $\eps\in (0,1)$, so that from (\ref{60.3}) we infer that for all $t\in (0,\tme)$ and $\eps\in (0,1)$,
  \bea{60.4}
  	\|\weps(\cdot,t)\|_{L^\infty(\Om)}
	&\le &\|w_0\|_{L^\infty(\Om)}
	+ (d-D) \cdot c_2 c_3 \nn\\
    	& &  +2 \int_0^t \Big(1+D^{-\frac{n}{6}}(t-s)^{-\frac{n}{6}}\Big) e^{-(t-s)}\| \weps(\cdot,s)\|_{L^3(\Om)} ds.
  \eea
  For fixed $T\in (0, \tme)$ writing $A_\eps(T):=\max_{t\in [0, T]} \|\weps(\cdot,t)\|_{L^\infty(\Om)}$ and
  $c_4=c_4(\eps, u_0, v_0):=\|u_0+v_0\|_{L^\infty(\Om)} + (d-D) \cdot c_2 c_3$,
  from (\ref{60.4}) and a simple interpolation we obtain that due to (\ref{60.2}),
  \bas
 	\|\weps(\cdot,t)\|_{L^\infty(\Om)}
 	&\le& c_4 +  2\int_0^t \Big(1+D^{-\frac{n}{6}}(t-s)^{-\frac{n}{6}}\Big) e^{-(t-s)}\|\weps(\cdot,s)\|_{L^2(\Om)}^\frac{2}{3}
        \cdot \|\weps(\cdot,s)\|_{L^\infty(\Om)}^\frac{1}{3} ds\\
 	&\le& c_4 +2c_1^\frac{1}{3}c_5\cdot A_\eps^\frac{1}{3}(T)
 	\qquad \mbox{for all } t\in (0, T),
  \eas
  where $c_5:=\int_0^\infty \big(1+D^{-\frac{n}{6}}\sigma^{-\frac{n}{6}}\big) e^{-\sigma} d\sigma$ is finite according to our
  assumption that $n\le 5$.
  In conjunction with Young's inequality, this entails that
  \bas
 	A_\eps(T)\le  c_4 +2c_1^\frac{1}{3}c_5\cdot A_\eps^\frac{1}{3}(T)
  	\le c_4 +\frac{2}{3} A_\eps(T) +\frac{4}{3} c_1^\frac{1}{2}c_5^\frac{3}{2}
  \eas
  and that, consequently,
  \bas
 	A_\eps(T)\le C(\eps, u_0, v_0):=3 c_4 +4 c_1^\frac{1}{2}c_5^\frac{3}{2}
 	\qquad \mbox{for all } t\in (0, T),
  \eas
  which implies (\ref{60.1}) due to the evident fact that
  $\|\weps(\cdot,t)\|_{L^\infty(\Om)} \ge  \|\ueps(\cdot,t)\|_{L^\infty(\Om)}$ for all $t\in (0,\tme)$.
 \qed
Having the above information at hand, we can rearrange the approach developed for (\ref{0}) in
\cite[Lemmata 5.2-5.6]{taowin_JMPA}
to establish bounds for gradients of solutions to (\ref{0eps}) in $L^p$ spaces with arbitrarily large finite $p$.
\begin{lem}\label{lem7}
  If (\ref{init}) holds and $\eps\in (0,1)$ is such that $\tme<\infty$,
  then for each $p\ge 4$ there exists $C(\eps,p,u_0,v_0)>0$ such that
  \be{7.1}
	\io |\na\ueps(\cdot,t)|^p
	+ \io |\na\veps(\cdot,t)|^p
	\le C(\eps,p,u_0,v_0)
	\qquad \mbox{for all $t\in (0,\tme)$ and } \eps\in (0,1).
  \ee
\end{lem}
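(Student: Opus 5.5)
Fix $\eps\in(0,1)$ with $\tme<\infty$. By Lemma \ref{lem5} and Lemma \ref{lem60} we already know that $0\le\ueps\le c_1$ and $0\le\veps\le c_1$ in $\Om\times(0,\tme)$. By Lemma \ref{lem6} we also know that $\veps$ is H\"older continuous up to $t=\tme$. Note as well that
\[
	\frac{\ueps}{1+\eps\ueps}\le \frac1\eps .
\]
Hence both equations in (\ref{0eps}) are uniformly parabolic with bounded coefficients on $(0,\tme)$:
\bas
	u_{\eps t}=D\Del\ueps-\na\cdot\big(g_\eps(\ueps)\na\veps\big),
	\qquad
	v_{\eps t}=\na\cdot\big((d+g_\eps(\ueps))\na\veps\big)-\veps+g_\eps(\ueps),
\eas
where $g_\eps(\xi):=\frac{\xi}{1+\eps\xi}$. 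The difficulty is that the second equation is quasilinear, and its diffusion coefficient $d+g_\eps(\ueps)$ depends on $\ueps$. I would follow the coupled gradient testing of \cite[Lemmata 5.2--5.6]{taowin_JMPA}. All constants may depend on $\eps$, so the degeneracy issue disappears.

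\textbf{Step 1: $L^2$ gradient bounds.} First I would test the $u$-equation by $-\Del\ueps$, or equivalently compute $\frac{d}{dt}\io|\na\ueps|^2$. Then I would test the $v$-equation by $-\na\cdot((d+g_\eps(\ueps))\na\veps)$, which amounts to tracing
\[
	\frac{d}{dt}\io \big(d+g_\eps(\ueps)\big)|\na\veps|^2 .
\]
This functional is equivalent to $\io|\na\veps|^2$ because $d\le d+g_\eps\le d+\frac1\eps$. Its time derivative produces the dissipation term $-\io|\na\cdot(a_\eps\na\veps)|^2$. It also produces cross terms containing $g_\eps'(\ueps)u_{\eps t}|\na\veps|^2$; since $|g_\eps'|\le1$, these can be controlled using the $u$-dissipation $\io|\Del\ueps|^2$ together with Gagliardo--Nirenberg and the $L^\infty$ bounds. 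Alternatively, and more simply, the cross term $\io g_\eps(\ueps)\na\veps\cdot\na\Del\ueps$ in the $u$-equation only needs $\io|\na\veps|^2$ and $\io|\Del\ueps|^2$. Starting from the space-time bounds (\ref{4.01}) and (\ref{4.02}), which give $\int_t^{t+\tau_\eps}\io(|\na\ueps|^2+|\na\veps|^2)\le c$, I would close a Gronwall-type ODI to obtain bounds on $\io|\na\ueps|^2+\io|\na\veps|^2$ that are uniform in $t<\tme$.

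\textbf{Step 2: passing to $L^p$.} For $p\ge4$ I would consider the coupled functional
\[
	y(t):=\io|\na\ueps|^p+B\io|\na\veps|^p .
\]
I would use the pointwise identity $\na\veps\cdot\na\Del\veps=\frac12\Del|\na\veps|^2-|D^2\veps|^2$ together with the boundary estimate $\frac{\pa|\na\veps|^2}{\pa\nu}\le c|\na\veps|^2$ on $\pO$. The coefficient $a_\eps=d+g_\eps(\ueps)$ generates terms of the form $|\na\veps|^{p-2}\na a_\eps\cdot\na\veps\,\Del\veps$. Since $|\na a_\eps|\le|\na\ueps|$, these are absorbed by Young's inequality into $\io|\na\veps|^{p-2}|D^2\veps|^2$ plus $c\io|\na\ueps|^2|\na\veps|^{p}$. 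The latter I would split once more into $\io|\na\ueps|^{p+2}$ and $\io|\na\veps|^{p+2}$. Both are interpolated through
\[
	\io|\na\psi|^{p+2}\le c\,\|\psi\|_{L^\infty(\Om)}^2\io|\na\psi|^{p-2}|D^2\psi|^2 ,
\]
which holds thanks to the $L^\infty$ bounds on $\ueps$ and $\veps$. The same inequality handles the $u$-equation's cross terms, since the $u$-equation has constant diffusion $D$. The coefficients are chosen so that the dissipation $\io|\na\ueps|^{p-2}|D^2\ueps|^2+\io|\na\veps|^{p-2}|D^2\veps|^2$ dominates all of these terms. This leads to $y'+y\le c$, and hence to (\ref{7.1}).

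\textbf{Main obstacle.} I expect the main obstacle to be absorbing the quartic cross terms $\io|\na\ueps|^2|\na\veps|^p$ and $\io|\na\veps|^2|\na\ueps|^{p-2}\cdots$. The interpolation constant is proportional to $\|\psi\|_{L^\infty}^2$, which is bounded here but not small. So a plain Young splitting may not close. My first attempt would be to replace $\ueps$ and $\veps$ by shifted variables, for instance $\varphi(\ueps)$ with a suitable convex $\varphi$ as in \cite{taowin_JMPA}. If that fails, I would instead use the H\"older regularity from Lemma \ref{lem6} to apply maximal Sobolev regularity locally in time for the $v$-equation, whose coefficient is continuous.
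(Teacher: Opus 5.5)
There is a genuine gap in your Step 2. When you test the $\ueps$-equation against $-\na\cdot(|\na\ueps|^{p-2}\na\ueps)$, the part $g_\eps(\ueps)\Del\veps$ of the cross-diffusive term produces, after Young's inequality, a contribution $C(\eps)\io|\na\ueps|^{p-2}|D^2\veps|^2$. This term is not of the form $|\na\ueps|^2|\na\veps|^p$, so it cannot be split into $\io|\na\ueps|^{p+2}$ and $\io|\na\veps|^{p+2}$. It is also not dominated by the two dissipation integrals $\io|\na\ueps|^{p-2}|D^2\ueps|^2$ and $\io|\na\veps|^{p-2}|D^2\veps|^2$, because the latter degenerates wherever $\na\veps$ is small while $\na\ueps$ is large. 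So your claim that the interpolation inequality handles the $\ueps$-equation's cross terms ``since it has constant diffusion $D$'' is not correct, and a functional consisting only of $\io|\na\ueps|^p+B\io|\na\veps|^p$ does not close.

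The paper meets the same obstruction. It works with $\weps=\ueps+\veps$; (\ref{0w}) cancels the cross-diffusion, but $(d-D)\Del\veps$ still generates $K_2\io|\na\weps|^{p-2}|D^2\veps|^2$ in (\ref{7.14}). The paper removes this term by adding the coupled-gradient functional $\io|\na\veps|^2|\na\weps|^{p-2}$, whose evolution (\ref{7.15}) supplies exactly the missing dissipation $\frac{d}{4}\io|\na\weps|^{p-2}|D^2\veps|^2$. This third ingredient is absent from your plan.

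Second, the obstacle you flag at the end is real, but you leave it unresolved, and your fallback does not work. Lemma \ref{lem6} gives H\"older continuity of $\veps$, not of the coefficient $d+g_\eps(\ueps)$, so maximal Sobolev regularity for a continuous-coefficient operator is not available.

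The actual role of Lemma \ref{lem6} is different: a uniform modulus of continuity for $\veps$ upgrades the interpolation inequality to (\ref{7.17}), namely $\io|\na\veps|^{p+2}\le\tilde\eta\io|\na\veps|^{p-2}|D^2\veps|^2+C$ with $\tilde\eta$ arbitrarily small. The paper then uses Young's inequality with free parameters $\sigma,\eta$, chosen in a careful order, so that every $\io|\na\weps|^{p+2}$ term carries a small coefficient. For these terms the fixed-constant inequality (\ref{7.16}), combined with the $L^\infty$ bound from Lemma \ref{lem60}, suffices. All large constants are pushed onto $\io|\na\veps|^{p+2}$, which (\ref{7.17}) then absorbs. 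Your Step 1 ($L^2$ gradient bounds) is not needed for any of this.
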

\proof
  A proof is sketched in an appendix below.
\qed
In consequence, each of our approximate solutions actually is global in time:
\begin{lem}\label{lem8}
  Whenever (\ref{init}) holds, we have $\tme=\infty$ for all $\eps\in (0,1)$.
\end{lem}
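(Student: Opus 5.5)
We argue by contradiction: assume that for some $\eps\in (0,1)$ we have $\tme<\infty$, and show that the quantity in the extensibility criterion (\ref{ext}) then stays bounded for some $p>n$.

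Since $n\le 5$, we may fix any $p\ge 4$ with $p>n$, for instance $p:=\max\{4,n+1\}$. Lemma \ref{lem7} then provides $C(\eps,p,u_0,v_0)>0$ such that $\io|\na\ueps(\cdot,t)|^p+\io|\na\veps(\cdot,t)|^p$ is bounded by this constant for all $t\in(0,\tme)$.

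For the zero-order parts we use bounds already in hand. Lemma \ref{lem60} gives a $t$-independent bound for $\|\ueps(\cdot,t)\|_{L^\infty(\Om)}$, and Lemma \ref{lem5} gives one for $\|\veps(\cdot,t)\|_{L^\infty(\Om)}$ with $K:=\|u_0\|_{L^\infty(\Om)}+\|v_0\|_{L^\infty(\Om)}$. Since $|\Om|<\infty$, these yield uniform bounds for $\|\ueps(\cdot,t)\|_{L^p(\Om)}$ and $\|\veps(\cdot,t)\|_{L^p(\Om)}$.

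Adding the gradient and zero-order bounds, $\|\ueps(\cdot,t)\|_{W^{1,p}(\Om)}+\|\veps(\cdot,t)\|_{W^{1,p}(\Om)}$ is bounded on $(0,\tme)$. This contradicts (\ref{ext}), which under $\tme<\infty$ demands unboundedness for \emph{all} $p>n$, in particular for our chosen $p$. Hence $\tme=\infty$.

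The only possible subtlety is already resolved: every lemma invoked here is stated under the hypothesis $\tme<\infty$, which is exactly our contradiction assumption. The real work lies in Lemma \ref{lem7} (the appendix), which we are taking as given; the present statement is a direct assembly of those bounds.
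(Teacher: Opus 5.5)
Your proof is correct and follows the paper's argument: apply Lemma \ref{lem7} with some $p\ge 4$, $p>n$, and contradict the extensibility criterion (\ref{ext}). The only difference is cosmetic: the paper controls the zero-order part of the $W^{1,p}$ norm via the mass bounds (\ref{mass}) (implicitly through a Poincar\'e-type inequality), whereas you use the $L^\infty$ bounds from Lemma \ref{lem5} and Lemma \ref{lem60}, which is equally valid.
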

\proof
  In view of (\ref{ext}), this directly results from (\ref{mass}) and an application of Lemma \ref{lem7} to any $p\ge 4$
  fulfilling $p>n$.
\qed
\mysection{Constructing a global weak solution of (\ref{0})}
The mere construction of a global weak solution to (\ref{0}) can, in its essence, already be based solely on Lemma \ref{lem4}
and the following fairly straightforward consequence thereof on time regularity.
\begin{lem}\label{lem81}
  Assume (\ref{init}). Then for all $T>0$ there exists $C(T,u_0,v_0)>0$ such that
  \be{81.1}
	\int_0^T \|u_{\eps t}(\cdot,t)\|_{(W^{1,6}(\Om))^\star}^2 dt \le C(T,u_0,v_0)
	\qquad \mbox{for all } \eps\in (0,1)
  \ee
  and
  \be{81.2}
	\int_0^T \|v_{\eps t}(\cdot,t)\|_{(W^{1,6}(\Om))^\star}^2 dt \le C(T,u_0,v_0)
	\qquad \mbox{for all } \eps\in (0,1).
  \ee
\end{lem}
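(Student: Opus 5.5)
We will argue by duality, testing both equations in (\ref{0eps}) against an arbitrary $\psi\in C^1(\bom)$ with $\|\psi\|_{W^{1,6}(\Om)}\le 1$, and estimating all resulting integrals by the bounds in Lemma \ref{lem4} and Lemma \ref{lem5}. By Lemma \ref{lem8} we have $\tme=\infty$, so $\tau_\eps=1$ for all $\eps\in (0,1)$. Summing (\ref{4.01}) and (\ref{4.02}) over the at most $\lceil T\rceil$ unit intervals covering $(0,T)$ therefore gives $\int_0^T\io |\na\weps|^2+\int_0^T\io|\na\veps|^2\le (T+1)\Gamma_1(K)$ with $K:=\|u_0\|_{L^\infty(\Om)}+\|v_0\|_{L^\infty(\Om)}$. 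Since $0\le\ueps\le\weps$, the gradient $\na\ueps=\na\weps-\na\veps$ is bounded in $L^2(\Om\times(0,T))$ as well.

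For the first component, integration by parts yields for each $t>0$
\[
	\Big|\io u_{\eps t}\psi\Big| \le D\io|\na\ueps||\na\psi| + \io \frac{\ueps}{1+\eps\ueps}|\na\veps||\na\psi|.
\]
The first term is at most $D|\Om|^{\frac13}\|\na\ueps\|_{L^2(\Om)}$ by H\"older's inequality, because $\|\na\psi\|_{L^6(\Om)}\le 1$. For the cross-diffusive term we apply H\"older's inequality with exponents $3,2,6$:
\[
	\io \frac{\ueps}{1+\eps\ueps}|\na\veps||\na\psi| \le \Big\|\frac{\ueps}{1+\eps\ueps}\Big\|_{L^3(\Om)}\|\na\veps\|_{L^2(\Om)}\|\na\psi\|_{L^6(\Om)}\le \Gamma_1^{\frac13}(K)\,\|\na\veps\|_{L^2(\Om)},
\]
where the last step uses (\ref{4.1}). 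Taking the supremum over $\psi$ gives $\|u_{\eps t}(\cdot,t)\|_{(W^{1,6}(\Om))^\star}\le c\,(\|\na\ueps(\cdot,t)\|_{L^2(\Om)}+\|\na\veps(\cdot,t)\|_{L^2(\Om)})$. Squaring and integrating over $(0,T)$ then yields (\ref{81.1}) by the spatio-temporal gradient bounds above.

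For the second component the same two terms appear, now with $d$ in place of $D$ and the opposite sign on the cross-diffusive term. In addition there are the zero-order terms $-\io\veps\psi+\io\frac{\ueps}{1+\eps\ueps}\psi$. Since $\|\psi\|_{L^\infty(\Om)}\le c\|\psi\|_{W^{1,6}(\Om)}$ when $n\le5$, these are bounded by $c(\io\veps+\io\ueps)$, which is controlled through (\ref{mass}). Alternatively one may use (\ref{5.1}) together with (\ref{4.1}) and $\|\psi\|_{L^2(\Om)}\le c$. This gives (\ref{81.2}).

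The only step requiring care is the cross-diffusive term. It is precisely the reason for choosing the dual space $(W^{1,6}(\Om))^\star$: only $L^3$ information on $\frac{\ueps}{1+\eps\ueps}$ and $L^2$ information on $\na\veps$ are available, and exponent $6$ on $\na\psi$ is exactly what closes H\"older's inequality. With this choice the argument is routine.
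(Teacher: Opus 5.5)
Your proposal is correct and follows essentially the same duality argument as the paper: you test against $\psi\in C^1(\bom)$ with $\|\psi\|_{W^{1,6}(\Om)}\le 1$, handle the cross-diffusive term by H\"older's inequality with exponents $3,2,6$ using (\ref{4.1}), and conclude from the spatio-temporal gradient bounds (\ref{4.01})--(\ref{4.02}). The deviations are harmless: you control the zero-order terms of the $v$-equation via $W^{1,6}(\Om)\hra L^\infty(\Om)$ (valid since $n\le 5$) and (\ref{mass}), whereas the paper uses Lemma \ref{lem5} and (\ref{4.1}); also, summing both gradient bounds gives $2(T+1)\Gamma_1(K)$ rather than $(T+1)\Gamma_1(K)$, which is immaterial.
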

\proof
  For definiteness in notation, we fix $c_1>0$ such that for each $\psi\in C^1(\bom)$ fulfilling $\|\psi\|_{W^{1,6}(\Om)} \le 1$
  we have $\|\na\psi\|_{L^6(\Om)} + \|\na\psi\|_{L^2(\Om)} + \|\psi\|_{L^\frac{3}{2}(\Om)} + \|\psi\|_{L^1(\Om)} \le c_1$.
  For any such $\psi$, recalling that $\ueps=\weps-\veps$ for $\eps\in (0,1)$,
  integrating by parts in (\ref{0eps}), we then obtain that
  \bas
	\bigg| \io u_{\eps t} \psi \bigg|
	&=& \bigg| - D \io \na\weps\cdot\na\psi
	+ \io \Big\{ D + \frac{\ueps}{1+\eps\ueps} \Big\} \na\veps\cdot\na\psi \bigg| \\
	&\le& D \|\na\weps\|_{L^2(\Om)} \|\na\psi\|_{L^2(\Om)}
	+ \Big\|D + \frac{\ueps}{1+\eps\ueps}\Big\|_{L^3(\Om)} \|\na\veps\|_{L^2(\Om)} \|\na\psi\|_{L^6(\Om)} \\
	&\le& c_1 D \|\na\weps\|_{L^2(\Om)}
	+ c_1 \Big\|D + \frac{\ueps}{1+\eps\ueps}\Big\|_{L^3(\Om)} \|\na\veps\|_{L^2(\Om)}
	\qquad \mbox{for all $t>0$ and } \eps\in (0,1)
  \eas
  and thus
  \bea{81.3}
	& & \hs{-24mm}
	\int_0^T \|u_{\eps t}(\cdot,t)\|_{(W^{1,6}(\Om))^\star}^2 dt \nn\\
	&\le& \int_0^T \Big\{
	c_1 D \|\na\weps(\cdot,t)\|_{L^2(\Om)}
	+ c_1 \Big\| D +\frac{\ueps(\cdot,t)}{1+\eps\ueps(\cdot,t)}\Big\|_{L^3(\Om)} \|\na\veps(\cdot,t)\|_{L^2(\Om)}
	\Big\}^2 dt \nn\\
	&\le& 2c_1^2 D^2 \int_0^T \io |\na \weps|^2
	+ 2c_1^2 \cdot \bigg\{ D|\Om|^\frac{1}{3} +
		\sup_{t>0} \Big\|\frac{\ueps(\cdot,t)}{1+\eps\ueps(\cdot,t)}\Big\|_{L^3(\Om)} \bigg\}^2
		\cdot \int_0^T \io |\na\veps|^2
  \eea
  for all $T>0$ and $\eps\in (0,1)$.
  Similarly, (\ref{0eps}) implies that
  for all $t>0$ and $\eps\in (0,1)$,
  \bas
	\bigg| \io v_{\eps t} \psi \bigg|
	&=& \bigg| - \io \Big(d+\frac{\ueps}{1+\eps\ueps}\Big) \na\veps\cdot\na\psi
	- \io \veps\psi
	+ \io \frac{\ueps}{1+\eps\ueps} \psi \bigg| \\
	&\le& \Big\| d+\frac{\ueps}{1+\eps\ueps}\Big\|_{L^3(\Om)} \|\na\veps\|_{L^2(\Om)} \|\na\psi\|_{L^6(\Om)} \\
	& & + \|\veps\|_{L^\infty(\Om)} \|\psi\|_{L^1(\Om)}
	+ \Big\|\frac{\ueps}{1+\eps\ueps}\Big\|_{L^3(\Om)} \|\psi\|_{L^\frac{3}{2}(\Om)} \\
	&\le& c_1 \cdot \Big\{ d |\Om|^\frac{1}{3} + \Big\|\frac{\ueps}{1+\eps\ueps}\Big\|_{L^3(\Om)} \Big\}
		\cdot \|\na\veps\|_{L^2(\Om)}
	+ c_1 \|\veps\|_{L^\infty(\Om)}
	+ c_1 \Big\|\frac{\ueps}{1+\eps\ueps}\Big\|_{L^3(\Om)}
  \eas
  and hence
  \bea{81.4}
	& & \hs{-30mm}
	\int_0^T \|v_{\eps t}(\cdot,t)\|_{(W^{1,6}(\Om))^\star}^2 dt \nn\\
	&\le& 3c_1^2 \cdot \bigg\{ d|\Om|^\frac{1}{3}
		+ \sup_{t>0} \Big\|\frac{\ueps(\cdot,t)}{1+\eps\ueps(\cdot,t)}\Big\|_{L^3(\Om)} \bigg\}^2
		\cdot \int_0^T \io |\na\veps|^2 \nn\\
	& & + 3c_1^2 \cdot \bigg\{ \sup_{t>0} \|\veps(\cdot,t)\|_{L^\infty(\Om)} \bigg\}^2 \cdot T
	+ 3c_1^2 \cdot \bigg\{ \sup_{t>0} \Big\|\frac{\ueps(\cdot,t)}{1+\eps\ueps(\cdot,t)}\Big\|_{L^3(\Om)} \bigg\}^2 \cdot T
  \eea
  for all $T>0$ and $\eps\in (0,1)$.
  In view of Lemma \ref{lem4} and Lemma \ref{lem5}, from (\ref{81.3}) and (\ref{81.4}) we obtain both (\ref{81.1}) and
  (\ref{81.2}) with some suitably large $C(T,u_0,v_0)>0$.
\qed
Indeed, a combination of Lemma \ref{lem4} with Lemma \ref{lem81} yields the following.
\begin{lem}\label{lem82}
  If (\ref{init}) holds, then there exist $(\eps_j)_{j\in\N} \subset (0,1)$ and nonnegative functions $u$ and $v$
  fulfilling (\ref{reg}) such that $\eps_j\searrow 0$ as $j\to\infty$, that
  \begin{eqnarray}
	& & \ueps \to u
	\qquad \mbox{in $L^2_{loc}(\bom\times [0,\infty))$ and a.e.~in } \Om\times (0,\infty),
	\label{82.2} \\
	& & \na\ueps \wto \na u
	\qquad \mbox{in $L^2_{loc}(\bom\times [0,\infty))$,}
	\label{82.3} \\
	& & \veps \to v
	\qquad \mbox{in $L^2_{loc}(\bom\times [0,\infty))$ and a.e.~in } \Om\times (0,\infty),
	\qquad \qquad \mbox{and}
	\label{82.4} \\
	& & \na\veps \wto \na v
	\qquad \mbox{in $L^2_{loc}(\bom\times [0,\infty))$,}
	\label{82.5}
  \end{eqnarray}
  as $\eps=\eps_j\searrow 0$.
  This limit $(u,v)$ forms a global weak solution of (\ref{0}) in the sense of Theorem \ref{theo14}.
\end{lem}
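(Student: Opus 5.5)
We plan to derive the claimed convergences from the $\eps$-independent bounds in Lemma \ref{lem4}, Lemma \ref{lem5} and Lemma \ref{lem81}, and then pass to the limit in the weak formulations of (\ref{0eps}). Since $\tme=\infty$ by Lemma \ref{lem8}, we have $\tau_\eps=1$. Summing (\ref{4.01}) and (\ref{4.02}) over unit intervals therefore gives, for each $T>0$, bounds on $\int_0^T\io|\na\weps|^2$ and $\int_0^T\io|\na\veps|^2$ by $(T+1)\Gamma_1(K)$ with $K:=\|u_0\|_{L^\infty(\Om)}+\|v_0\|_{L^\infty(\Om)}$. Together with (\ref{4.00}) and Lemma \ref{lem5}, this shows that $(\ueps)=(\weps-\veps)$ and $(\veps)$ are bounded in $L^2((0,T);W^{1,2}(\Om))$. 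Lemma \ref{lem81} bounds their time derivatives in $L^2((0,T);(W^{1,6}(\Om))^\star)$. The Aubin--Lions lemma, applied with $W^{1,2}(\Om)\hookrightarrow\hookrightarrow L^2(\Om)\hookrightarrow (W^{1,6}(\Om))^\star$, yields relative compactness in $L^2(\Om\times(0,T))$. A diagonal argument over $T\in\N$ then produces $\eps_j\searrow0$ along which (\ref{82.2})--(\ref{82.5}) hold, with a.e. convergence obtained after a further subsequence. Nonnegativity and (\ref{reg}) follow by lower semicontinuity.

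Next we verify (\ref{wu}) and (\ref{wv}). For $\vp\in C_0^\infty(\bom\times[0,\infty))$, testing (\ref{0eps}) gives identities of the same form, with $u$ and $v$ replaced by $\ueps$ and $\veps$ and with $u\na v$ replaced by $\feps\na\veps$, where $\feps:=\frac{\ueps}{1+\eps\ueps}$. The linear terms pass to the limit by (\ref{82.2})--(\ref{82.5}). For the source term $\int\io\feps\vp$, observe that $\feps\to u$ a.e., since $\ueps\to u$ a.e. and $|\feps-\ueps|\le\eps\ueps^2$ is pointwise small. By (\ref{4.1}), $\feps$ is bounded in $L^3$, so Vitali's theorem gives $\feps\to u$ in $L^2_{loc}(\bom\times[0,\infty))$.

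The main obstacle is the nonlinear cross term $\int_0^\infty\io\feps\na\veps\cdot\na\vp$, which is a product of two sequences that converge only weakly or strongly in $L^2$. Here strong $L^2$ convergence of $\feps$ combined with weak $L^2$ convergence of $\na\veps$ suffices, because $\na\vp\in L^\infty$ has compact support in time: the product $\feps\na\vp$ converges strongly in $L^2$, and pairing it with the weakly convergent $\na\veps$ gives convergence to $\int\io u\na v\cdot\na\vp$. The same argument handles the term appearing in both equations. The initial-data terms are unchanged since $\ueps(\cdot,0)=u_0$ and $\veps(\cdot,0)=v_0$.
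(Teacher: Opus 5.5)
Your proposal is correct and follows essentially the same route as the paper: uniform $L^2((0,T);W^{1,2}(\Om))$ bounds from Lemma \ref{lem4} and (\ref{mass}), time-derivative bounds from Lemma \ref{lem81}, Aubin--Lions extraction, and then passage to the limit in the tested identities by pairing strong $L^2_{loc}$ convergence of $\frac{\ueps}{1+\eps\ueps}\na\vp$ with weak convergence of $\na\veps$. The only minor deviation is that you obtain $\frac{\ueps}{1+\eps\ueps}\to u$ in $L^2_{loc}$ via Vitali's theorem and the $L^3$ bound (\ref{4.1}), whereas the paper appeals to dominated convergence based on (\ref{82.2}); both arguments work.
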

\proof
  For $T>0$, from Lemma \ref{lem4}, (\ref{w}) and (\ref{mass}) we know that
  \bas
	(\ueps)_{\eps\in (0,1)}
	\quad \mbox{and} \quad
	(\veps)_{\eps\in (0,1)}
	\quad \mbox{are bounded in } L^2((0,T);W^{1,2}(\Om)),
  \eas
  while Lemma \ref{lem81} asserts that
  \bas
	(u_{\eps t})_{\eps\in (0,1)}
	\quad \mbox{and} \quad
	(v_{\eps t})_{\eps\in (0,1)}
	\quad \mbox{are bounded in } L^2\big((0,T);(W^{1,6}(\Om))^\star\big).
  \eas
  Two applications of an Aubin-Lions lemma thus yield $(\eps_j)_{j\in\N} \subset (0,1)$ as well as nonnegative elements
  $u$ and $v$ of $L^2_{loc}([0,\infty);W^{1,2}(\Om))$ such that $\eps_j\searrow 0$ as $j\to\infty$, and that
  (\ref{82.2})-(\ref{82.5}) hold as $\eps=\eps_j\searrow 0$.
  For the derivation of (\ref{wu}) and (\ref{wv}), we fix $\vp\in C_0^\infty(\bom\times [0,\infty))$ and then see on integrating
  by parts in (\ref{0eps}) that
  \be{82.6}
	- \int_0^\infty \io \ueps\vp_t - \io u_0 \vp(\cdot,0)
	= -D \int_0^\infty \io \na\ueps\cdot\na\vp
	+ \int_0^\infty \io \frac{\ueps}{1+\eps\ueps} \na\veps\cdot\na\vp
  \ee
  as well as
  \bea{82.7}
	- \int_0^\infty \io \veps\vp_t - \io v_0\vp(\cdot,0)
	&=& - d \int_0^\infty \io \na\veps\cdot\na\vp
	- \int_0^\infty \io \frac{\ueps}{1+\eps\ueps} \na\veps\cdot\na\vp \nn\\
	& & - \int_0^\infty \io \veps\vp
	+ \int_0^\infty \io \frac{\ueps}{1+\eps\ueps} \vp
  \eea
  for all $\eps\in (0,1)$.
  Now from (\ref{82.2}) and the dominated convergence theorem it readily follows that $\frac{\ueps}{1+\eps\ueps} \to u$
  in $L^2_{loc}(\bom\times [0,\infty))$ as $\eps=\eps_j\searrow 0$, which combind with (\ref{82.5}) shows that not only
  \bas
	\int_0^\infty \io \frac{\ueps}{1+\eps\ueps} \vp \to \int_0^\infty \io u\vp
	\qquad \mbox{as $\eps=\eps_j\searrow 0$,}
  \eas
  but also
  \bas
	\int_0^\infty \io \frac{\ueps}{1+\eps\ueps} \na\veps\cdot\na\vp
	\to \int_0^\infty \io u\na v\cdot\na\vp
	\qquad \mbox{as $\eps=\eps_j\searrow 0$.}
  \eas
  Since clearly
  \bas
	\int_0^\infty \io \ueps\vp_t \to \int_0^\infty \io u\vp_t,
	\quad
	\int_0^\infty \io \veps\vp_t \to \int_0^\infty \io v\vp_t
	\quad \mbox{and} \quad
	\int_0^\infty \io \veps\vp \to \int_0^\infty \io v\vp
  \eas
  as $\eps=\eps_j\searrow 0$ by (\ref{82.2}) and (\ref{82.4}), and since moreover
  \bas
	\int_0^\infty \io \na\ueps\cdot\na\vp \to \int_0^\infty \io \na u\cdot\na\vp
	\quad \mbox{and} \quad
	\int_0^\infty \io \na\veps\cdot\na\vp \to \int_0^\infty \io \na v\cdot\na\vp
	\qquad \mbox{as $\eps=\eps_j\searrow 0$}
  \eas
  due to (\ref{82.3}) and (\ref{82.5}), from (\ref{82.6}) and (\ref{82.7}) we infer that indeed both (\ref{wu}) and (\ref{wv}) hold.
\qed
\mysection{Controlling $\ueps$ in exponential Orlicz classes. Proof of Theorem \ref{theo14}}
\subsection{An approximate variant of $\io (w+1)e^{(w+1)^\al}$ and its evolution}
Next approaching the core of our analysis, in this part we will address an approximate counterpart of the Orlicz class
estimate in (\ref{14.1}).
A first step toward this will rely on the outcome of Lemma \ref{lem33} when applied to the functions introduced
and characterized as follows.
\begin{lem}\label{lem333}
  Let $\al>0$ and $\eps\in (0,1)$ be such that
  \be{333.1}
	\eps^\al \le \frac{\al}{2}.
  \ee
  Then for
  \be{333.2}
	\rho_\eps(\xi):=\frac{\xi+1}{1+\eps\xi} \cdot e^{(\xi+1)^\al},
	\qquad \xi\ge 0,
  \ee
  we have
  \be{333.02}
	\rho_\eps(\xi) \le \al \cdot \frac{(\xi+1)^{\al+1}}{1+\eps\xi} \cdot e^{(\xi+1)^\al}
	+ \Big(\frac{e}{\al}\Big)^\frac{1}{\al}
	\qquad \mbox{for all } \xi\ge 0
  \ee
  and
  \be{333.3}
	0 \le (\xi+1)\rho_\eps'(\xi)
	\le 2\al\cdot \frac{(\xi+1)^{\al+1}}{1+\eps\xi} \cdot e^{(\xi+1)^\al}
	+ \Big(\frac{e}{\al}\Big)^\frac{1}{\al}
	\qquad \mbox{for all } \xi\ge 0
  \ee
  as well as
  \be{333.4}
	\frac{\al^2}{2} \cdot \frac{(\xi+1)^{2\al-1}}{1+\eps\xi} \cdot e^{(\xi+1)^\al}
	\le \rho_\eps''(\xi)
	\le 3\al^2 \cdot \frac{(\xi+1)^{2\al-1}}{1+\eps\xi} \cdot e^{(\xi+1)^\al}
	+ e^\frac{1}{\al}
	\qquad \mbox{for all } \xi\ge 0.
  \ee
\end{lem}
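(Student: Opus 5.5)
We verify each of the three claims by explicit differentiation of $\rho_\eps$ from (\ref{333.2}). Throughout we use the elementary bounds $\frac{\eps(\xi+1)}{1+\eps\xi}\le 1$ and $\frac{\xi+1}{1+\eps\xi}\ge 1$. The only role of (\ref{333.1}) is to make the negative terms produced by the factor $(1+\eps\xi)^{-1}$ absorbable.

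\emph{Bound (\ref{333.02}).} We split according to whether $(\xi+1)^\al\ge \frac{1}{\al}$ or not. In the first case $\rho_\eps(\xi)\le \al(\xi+1)^\al\rho_\eps(\xi)$, which is the first summand. In the second case $\xi+1<\al^{-1/\al}$ and $e^{(\xi+1)^\al}<e^{1/\al}$. Since $\frac{\xi+1}{1+\eps\xi}\le\xi+1$, we get $\rho_\eps(\xi)\le \al^{-1/\al}e^{1/\al}=(e/\al)^{1/\al}$.

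\emph{Bound (\ref{333.3}).} We compute
\[
	(\xi+1)\rho_\eps'(\xi)=\frac{(\xi+1)e^{(\xi+1)^\al}}{1+\eps\xi}\Big\{ \al(\xi+1)^\al + 1 - \frac{\eps(\xi+1)}{1+\eps\xi}\Big\}.
\]
The curly bracket is at least $\al(\xi+1)^\al\ge0$, which gives nonnegativity. It is also at most $\al(\xi+1)^\al+1$. Hence $(\xi+1)\rho_\eps'\le \al(\xi+1)^\al\rho_\eps+\rho_\eps$, and applying (\ref{333.02}) to the second term yields $2\al\frac{(\xi+1)^{\al+1}}{1+\eps\xi}e^{(\xi+1)^\al}+(e/\al)^{1/\al}$.

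\emph{Bound (\ref{333.4}).} This is the main step. Write $\rho_\eps=g\cdot h$ with $g(\xi)=\frac{\xi+1}{1+\eps\xi}$ and $h(\xi)=e^{(\xi+1)^\al}$. Then
\[
	g'=\frac{1-\eps}{(1+\eps\xi)^2}, \qquad g''=-\frac{2\eps(1-\eps)}{(1+\eps\xi)^3},
\]
\[
	h'=\al(\xi+1)^{\al-1}h, \qquad h''=\big\{\al^2(\xi+1)^{2\al-2}+\al(\al-1)(\xi+1)^{\al-2}\big\}h.
\]
Factoring out $\frac{(\xi+1)^{2\al-1}}{1+\eps\xi}e^{(\xi+1)^\al}$ from $\rho_\eps''=g''h+2g'h'+gh''$, the bracket is
\[
	\al^2+\al(\al-1)(\xi+1)^{-\al}+\frac{2\al(1-\eps)}{(1+\eps\xi)(\xi+1)^{\al}}-\frac{2\eps(1-\eps)}{(1+\eps\xi)^2(\xi+1)^{2\al-2}}.
\]

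For the upper bound, drop the nonpositive terms. This is legitimate when $\al\le1$. If $\al>1$, we instead bound $\al(\al-1)(\xi+1)^{-\al}\le\al^2$ directly. The remaining positive terms are at most $\al^2+\al(\al-1)_+ + 2\al(\xi+1)^{-\al}$. Where $(\xi+1)^\al\ge\frac{1}{\al}$, this gives at most $3\al^2$. Where $(\xi+1)^\al<\frac1\al$, a crude bound of $\rho_\eps''$ on this bounded range should yield a constant of the form $e^{1/\al}$ (after possibly adjusting constants). This needs checking, and is where I expect the most bookkeeping.

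For the lower bound, we need the bracket to be at least $\frac{\al^2}{2}$. The dangerous terms are the last one and, when $\al<1$, the term $-\al(1-\al)(\xi+1)^{-\al}$. For the last term, use $\frac{\eps(\xi+1)}{1+\eps\xi}\le1$ and $\frac{\eps}{1+\eps\xi}\le \eps$ to write
\[
	\frac{2\eps(1-\eps)}{(1+\eps\xi)^2(\xi+1)^{2\al-2}}\le \frac{2}{(\xi+1)^{2\al}}\cdot \frac{\eps(\xi+1)}{1+\eps\xi}\cdot\frac{(\xi+1)}{1+\eps\xi}\cdot\eps .
\]
Combined with the positive term $\frac{2\al(1-\eps)}{(1+\eps\xi)(\xi+1)^\al}$, this reduces the claim to an inequality in $s=(\xi+1)^\al$ and the ratio $\frac{\eps(\xi+1)}{1+\eps\xi}$. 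There, (\ref{333.1}), i.e.\ $\eps^\al\le\frac\al2$, is meant to guarantee that the negative contributions do not exceed half of $\al^2$ plus the positive cross term. The hardest point is the regime of small $\xi$ with $\al<1$. There $-\al(1-\al)(\xi+1)^{-\al}$ must be compensated by the cross term $2\al(1-\eps)(\xi+1)^{-\al}/(1+\eps\xi)$. I would verify this by checking that $2(1-\eps)/(1+\eps\xi)\ge 1-\al$ on the relevant range, and otherwise fall back on the large-$\xi$ term $\al^2$.
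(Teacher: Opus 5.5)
Your proofs of (\ref{333.02}) and (\ref{333.3}) are correct. The first is the paper's argument. For the second, factoring $(\xi+1)\rho_\eps'(\xi)=\rho_\eps(\xi)\cdot\{\al(\xi+1)^\al+\frac{1-\eps}{1+\eps\xi}\}$ and then invoking (\ref{333.02}) is a slightly tidier substitute for the paper's direct case distinction. The gap is the lower bound in (\ref{333.4}), the only place where (\ref{333.1}) enters, and you do not actually prove it:
the last term of your bracket should carry $(\xi+1)^{2\al-1}$, not $(\xi+1)^{2\al-2}$, in the denominator;
your displayed estimate of that term is false, since as written it is equivalent to $1-\eps\le\eps$, while (\ref{333.1}) with $\al<1$ forces $\eps<\frac12$;
and the announced reduction merely restates the claim.
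Your proposed compensation $\frac{2(1-\eps)}{1+\eps\xi}\ge 1-\al$ breaks down once $\eps\xi$ is large. The same cross term is also needed against the last negative term: for $\al=\frac34$, $\xi=0$ and $\eps$ near $(\frac38)^{4/3}\approx 0.27$, that term alone is $2\eps(1-\eps)\approx 0.39>\frac{\al^2}{2}$. So with your decomposition a genuine case analysis is still owed. Likewise, the constant $e^{1/\al}$ in the upper bound is left unchecked, and it cannot be ``adjusted'', since the statement fixes it.

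The idea you are missing, which is how the paper proceeds, is to write $\rho_\eps''$ so that each negative term is dominated by a single positive one. Differentiate $\rho_\eps'=\al\frac{(\xi+1)^\al}{1+\eps\xi}e^{(\xi+1)^\al}+\frac{1-\eps}{(1+\eps\xi)^2}e^{(\xi+1)^\al}$ term by term. Equivalently, write $\al(\al-1)=\al^2-\al$ in your bracket, expand the $-\al$ part via $1=\frac{(1-\eps)+\eps(\xi+1)}{1+\eps\xi}$, and merge it with your cross term. With $X:=\xi+1$, $Q:=1+\eps\xi$, $E:=e^{X^\al}$ this gives $\rho_\eps''=T_1+\dots+T_5$, where
\[
	T_1=\frac{\al^2X^{2\al-1}E}{Q},\quad
	T_2=\frac{\al^2X^{\al-1}E}{Q},\quad
	T_3=-\frac{\al\eps X^{\al}E}{Q^2},\quad
	T_4=\frac{\al(1-\eps)X^{\al-1}E}{Q^2},\quad
	T_5=-\frac{2\eps(1-\eps)E}{Q^3}.
\]
The troublesome $\al(\al-1)$ term is gone, and $T_2,T_4\ge 0$. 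Now $|T_3|=rT_1$ and $|T_5|=2rT_4$ with $r:=\frac{\eps X^{1-\al}}{\al Q}$, and for $\al\le 1$,
\[
	r=\frac{\eps^\al}{\al}\Big(\frac{\eps X}{Q}\Big)^{1-\al}Q^{-\al}\le\frac{\eps^\al}{\al}\le\frac12
\]
by (\ref{333.1}), since $\eps X\le Q$ and $Q\ge 1$. Hence $\rho_\eps''\ge\frac12T_1+T_2\ge\frac12 T_1$, which is the left inequality in (\ref{333.4}). For $\al>1$, which is not needed later, pair instead $T_3$ with $T_2$, since $|T_3|/T_2=\frac{\eps X}{\al Q}\le\frac1\al$, and $T_5$ with $\frac12T_1$, since $|T_5|/T_1\le\frac{2\eps(1-\eps)}{\al^2}\le\frac{1}{2\al^2}$. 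The same splitting yields the upper bound with the exact constant. We have $\rho_\eps''\le T_1+T_2+T_4$, and $T_2\le T_1$ because $X^{\al-1}\le X^{2\al-1}$. Also $T_4\le T_1$ if $X^\al\ge\frac1\al$, while otherwise $T_4\le\al X^\al E\le e^{1/\al}$. Hence $\rho_\eps''\le 3T_1+e^{1/\al}$.
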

\proof
  Using that $\frac{d}{d\xi} \frac{\xi+1}{1+\eps\xi}=\frac{1-\eps}{(1+\eps\xi)^2}$ for all $\xi\ge 0$, we calculate
  \be{333.5}
	\rho_\eps'(\xi)
	= \al \cdot \frac{(\xi+1)^\al}{1+\eps\xi} \cdot e^{(\xi+1)^\al}
	+ (1-\eps) \cdot \frac{1}{(1+\eps\xi)^2} \cdot e^{(\xi+1)^\al}
  \ee
  and
  \bea{333.6}
	\rho_\eps''(\xi)
	&=& \al^2 \cdot \frac{(\xi+1)^{2\al-1}}{1+\eps\xi} \cdot e^{(\xi+1)^\al}
	+ \al^2 \cdot \frac{(\xi+1)^{\al-1}}{1+\eps\xi} \cdot e^{(\xi+1)^\al}
	- \al\eps \cdot \frac{(\xi+1)^\al}{(1+\eps\xi)^2} \cdot e^{(\xi+1)^\al} \nn\\
	& & + \al (1-\eps) \cdot \frac{(\xi+1)^{\al-1}}{(1+\eps\xi)^2} \cdot e^{(\xi+1)^\al}
	- 2\eps(1-\eps)\cdot \frac{1}{(1+\eps\xi)^3} \cdot e^{(\xi+1)^\al}
  \eea
  for $\xi\ge 0$, and observe that if $\xi\ge 0$ is such that
  \be{333.7}
	(\xi+1)^\al \ge \frac{1}{\al},
  \ee
  then
  \bas
	\frac{(1-\eps) \cdot \frac{1}{(1+\eps\xi)^2} \cdot e^{(\xi+1)^\al}}
		{\al\cdot \frac{(\xi+1)^\al}{1+\eps\xi} \cdot e^{(\xi+1)^\al}}
	= \frac{1-\eps}{\al} \cdot \frac{1}{(\xi+1)^\al (1+\eps\xi)}
	\le \frac{1}{\al} \cdot \frac{1}{(\xi+1)^\al}
	\le 1
  \eas
  and thus
  \bas
	(\xi+1) \rho_\eps'(\xi) \le 2\al\cdot \frac{(\xi+1)^{\al+1}}{1+\eps\xi} \cdot e^{(\xi+1)^\al}.
  \eas
  If $\xi\ge 0$ is such that (\ref{333.7}) does not hold, however, then
  \bas
	(\xi+1) \cdot (1-\eps) \cdot \frac{1}{(1+\eps\xi)^2} \cdot e^{(\xi+1)^\al}
	\le (\xi+1) e^{(\xi+1)^\al}
	\le \Big(\frac{1}{\al}\Big)^\frac{1}{\al} \cdot e^\frac{1}{\al},
  \eas
  so that (\ref{333.3}) follows due to our assumption that $\eps<1$.\abs
  Likewise, for $\xi\ge 0$ fulfilling (\ref{333.7}) we can estimate
  \bas
	\frac{\rho_\eps(\xi)}{\al\cdot\frac{(\xi+1)^{\al+1}}{1+\eps\xi} \cdot e^{(\xi+1)^\al}}
	= \frac{1}{\al} \cdot \frac{1}{(\xi+1)^\al} \le 1,
  \eas
  while for $\xi\ge 0$ satisfying $(\xi+1)^\al < \frac{1}{\al}$ we have
  \bas
	\rho_\eps(\xi) \le (\xi+1) e^{(\xi+1)^\al} \le \Big(\frac{1}{\al}\Big)^\frac{1}{\al} \cdot e^\frac{1}{\al},
  \eas
  meaning that also (\ref{333.02}) holds.\abs
  In quite a similar fashion, for $\xi\ge 0$ we see that if (\ref{333.7}) is valid, then
  \bas
	\frac{\al(1-\eps)\cdot\frac{(\xi+1)^{\al-1}}{(1+\eps\xi)^2} \cdot e^{(\xi+1)^\al}}
		{\al^2 \cdot \frac{(\xi+1)^{2\al-1}}{1+\eps\xi} \cdot e^{(\xi+1)^\al}}
	= \frac{1-\eps}{\al} \cdot \frac{1}{(\xi+1)^\al (1+\eps\xi)}
	\le \frac{1}{\al} \cdot \frac{1}{(\xi+1)^\al}
	\le 1,
  \eas
  whereas otherwise,
  \bas
	\al(1-\eps) \cdot \frac{(\xi+1)^{\al-1}}{(1+\eps\xi)^2} \cdot e^{(\xi+1)^\al}
	\le \al(\xi+1)^\al e^{(\xi+1)^\al}
	\le \al\cdot \frac{1}{\al} \cdot e^\frac{1}{\al}
	= e^\frac{1}{\al}.
  \eas
  Since moreover $(\xi+1)^{\al-1} \le (\xi+1)^{2\al-1}$ for all $\xi\ge 0$ and thus
  \bas
	\al^2 \cdot \frac{(\xi+1)^{\al-1}}{1+\eps\xi} \cdot e^{(\xi+1)^\al}
	\le \al^2 \cdot \frac{(\xi+1)^{2\al-1}}{1+\eps\xi} \cdot e^{(\xi+1)^\al}
	\qquad \mbox{for all } \xi\ge 0,
  \eas
  again relying on the fact that $\eps\in (0,1)$ we therefore obtain the right inequality in (\ref{333.4}) from (\ref{333.6}).\abs
  The claimed lower bound for $\rho_\eps''$, finally, can be verified by making use of our restriction in (\ref{333.1}),
  which namely asserts that
  \bas
	\frac{\al\eps\cdot \frac{(\xi+1)^\al}{(1+\eps\xi)^2} \cdot e^{(\xi+1)^\al}}
		{\al^2 \cdot \frac{(\xi+1)^{2\al-1}}{1+\eps\xi} \cdot e^{(\xi+1)^\al}}
	&=& \frac{\eps}{\al} \cdot \frac{(\xi+1)^{1-\al}}{1+\eps\xi} \\
	&=& \frac{\eps^\al}{\al} \cdot \frac{(\eps\xi+\eps)^{1-\al}}{(1+\eps\xi)^{1-\al}} \cdot \frac{1}{(1+\eps\xi)^\al} \\
	&\le& \frac{\eps^\al}{\al}
	\le \frac{1}{2}
	\qquad \mbox{for all } \xi\ge 0,
  \eas
  and that, similarly,
  \bas
	\frac{2\eps(1-\eps)\cdot\frac{1}{(1+\eps\xi)^3} \cdot e^{(\xi+1)^\al}}
		{\al(1-\eps)\cdot \frac{(\xi+1)^{\al-1}}{(1+\eps\xi)^2} \cdot e^{(\xi+1)^\al}}
	= 2\cdot\frac{\eps}{\al} \cdot \frac{(\xi+1)^{1-\al}}{1+\eps\xi}
	\le 1
	\qquad \mbox{for all } \xi\ge 0.
  \eas
  Consequently, (\ref{333.6}) implies that indeed also the left inequality in (\ref{333.4}) holds.
\qed
Collecting the above list of inequalities shows that the general evolution property from Lemma \ref{lem33} can be turned
into the following starting point of our analysis toward (\ref{14.1}).
\begin{lem}\label{lem34}
  If $\al>0$ and $\eps\in (0,1)$ is such that $\eps^\al\le\frac{\al}{2}$, then
  \bea{34.1}
	& & \hs{-20mm}
	\frac{d}{dt} \io \frac{\weps+1}{1+\eps\weps} \cdot e^{(\weps+1)^\al}
	+ \io \frac{\weps+1}{1+\eps\weps} \cdot e^{(\weps+1)^\al}
	+ \frac{D\al^2}{4} \io \frac{(\weps+1)^{2\al-1}}{1+\eps\weps} \cdot e^{(\weps+1)^\al} |\na\weps|^2 \nn\\
	&\le& \frac{3(d-D)^2 \al^2}{2D} \io \frac{(\weps+1)^{2\al-1}}{1+\eps\weps} \cdot e^{(\weps+1)^\al} |\na\veps|^2
	+ \frac{(d-D)^2 e^\frac{1}{\al}}{2D} \io |\na\veps|^2 \nn\\
	& & + 3\al \io \frac{(\weps+1)^{\al+1}}{1+\eps\weps} \cdot e^{(\weps+1)^\al}
	+ 2\cdot\Big(\frac{e}{\al}\Big)^\frac{1}{\al} \cdot |\Om|
  \eea
  for all $t>0$.
\end{lem}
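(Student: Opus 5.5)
We apply Lemma \ref{lem33} with $\rho:=\rho_\eps$ taken from (\ref{333.2}). Since $\eps^\al\le\frac{\al}{2}$, Lemma \ref{lem333} is available. The left inequality in (\ref{333.4}) and the left inequality in (\ref{333.3}) give $\rho_\eps''\ge 0$ and $\rho_\eps'\ge 0$; as $\rho_\eps$ is clearly $C^2$ on $[0,\infty)$, (\ref{33.1}) applies for all $t>0$, recalling $\tme=\infty$ from Lemma \ref{lem8}. Writing $E_\eps:=\frac{(\weps+1)^{2\al-1}}{1+\eps\weps}\,e^{(\weps+1)^\al}$, we then estimate each of the three terms in (\ref{33.1}).

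\emph{Dissipation.} The left inequality in (\ref{333.4}) shows
\[
\frac{D}{2}\io\rho_\eps''(\weps)|\na\weps|^2\ \ge\ \frac{D\al^2}{4}\io E_\eps\,|\na\weps|^2 ,
\]
which is exactly the dissipative term on the left of (\ref{34.1}).

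\emph{Cross-diffusion term.} The right inequality in (\ref{333.4}), multiplied by $\frac{(d-D)^2}{2D}$, gives
\[
\frac{(d-D)^2}{2D}\io\rho_\eps''(\weps)|\na\veps|^2\ \le\ \frac{3(d-D)^2\al^2}{2D}\io E_\eps\,|\na\veps|^2+\frac{(d-D)^2e^{\frac1\al}}{2D}\io|\na\veps|^2 ,
\]
matching the first two terms on the right of (\ref{34.1}).

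\emph{Source term and the added functional.} The inequality (\ref{333.3}) yields
\[
\io(\weps+1)\rho_\eps'(\weps)\ \le\ 2\al\io\frac{(\weps+1)^{\al+1}}{1+\eps\weps}e^{(\weps+1)^\al}+\Big(\frac e\al\Big)^{\frac1\al}|\Om| .
\]
To obtain the term $\io\rho_\eps(\weps)$ on the left of (\ref{34.1}), we add it to both sides and bound it by (\ref{333.02}):
\[
\io\rho_\eps(\weps)\ \le\ \al\io\frac{(\weps+1)^{\al+1}}{1+\eps\weps}e^{(\weps+1)^\al}+\Big(\frac e\al\Big)^{\frac1\al}|\Om| .
\]
Summing the two bounds produces the coefficient $3\al$ and the constant $2(\frac e\al)^{\frac1\al}|\Om|$, as claimed.

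There is no real obstacle here; the substance was already handled in Lemma \ref{lem333}, where the condition $\eps^\al\le\frac{\al}{2}$ secures the lower bound for $\rho_\eps''$. The only care needed is the bookkeeping of constants: the factor $\frac{D}{2}\cdot\frac{\al^2}{2}$ in the dissipation, the factor $3\al^2$ in the cross term, and combining $2\al+\al$ in the source term.
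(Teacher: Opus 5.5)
Your proposal is correct and follows the paper's proof essentially verbatim: you apply Lemma \ref{lem33} to $\rho_\eps$ from (\ref{333.2}), using the two-sided bound (\ref{333.4}) for the dissipative and cross-diffusive terms, (\ref{333.3}) for the source term, and (\ref{333.02}) to bound the added zero-order integral $\io\rho_\eps(\weps)$. Your bookkeeping of the constants $3\al$ and $2(\frac{e}{\al})^{\frac{1}{\al}}|\Om|$ checks out, and you correctly invoke Lemma \ref{lem8} to justify the validity for all $t>0$.
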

\proof
  We let $\rho_\eps$ be as in Lemma \ref{lem333}, and note that then
  \bas
	\io (\weps+1) \rho_\eps'(\weps)
	\le 2\al \io \frac{(\weps+1)^{\al+1}}{1+\eps\weps} \cdot e^{(\weps+1)^\al}
	+ \Big(\frac{e}{\al}\Big)^\frac{1}{\al} \cdot |\Om|
	\qquad \mbox{for all } t>0
  \eas
  by (\ref{333.3}), and that
  \bas
	\io \frac{\weps+1}{1+\eps\weps} \cdot e^{(\weps+1)^\al}
	= \io \rho_\eps(\weps)
	\le \al \io \frac{(\weps+1)^{\al+1}}{1+\eps\weps} \cdot e^{(\weps+1)^\al}
	+ \Big(\frac{e}{\al}\Big)^\frac{1}{\al} \cdot |\Om|
	\qquad \mbox{for all } t>0
  \eas
  thanks to (\ref{333.02}).
  Therefore, (\ref{34.1}) is a consequence of Lemma \ref{lem33} when combined with (\ref{333.4}).
\qed
\subsection{A functional involving a multiplicative coupling of $\veps$ and $\weps$}
Now the key step will aim at an appropriate control of the first integral on the right-hand side of (\ref{34.1}),
viewed here as an expression in the flavor of a Dirichlet integral over $\veps$ that involves a weight function depending
on $\weps$ in a rapidly growing manner.
Our approach toward a compensation of this will be based on an analysis of
functionals which for $\eps\in (0,1)$ couple $\veps$ to $\weps$ in a certain multiplicative manner, 
allowing for some superalgebraic dependencies on $\weps$.
An initial observation in this direction will be formulated in Lemma \ref{lem11}, making use of the simple two-sided
estimate on the effective diffusion rate in the second equation in (\ref{0eps}).
\begin{lem}\label{lem9}
  If $K>0$ and (\ref{init}) as well as (\ref{K}) hold, then
  \be{9.1}
	\frac{\min\{d,1\}}{M+1} \cdot \frac{\weps +1}{1+\eps\weps}
	\le d + \frac{\ueps}{1+\eps\ueps}
	\le (d+1)\cdot \frac{\weps +1}{1+\eps\weps}
	\quad \mbox{in } \Om\times (0,\infty)
	\qquad \mbox{for all } \eps\in (0,1),
  \ee
  where $M=M(K)$ is as in Lemma \ref{lem5}.
\end{lem}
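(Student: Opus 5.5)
Both inequalities in (\ref{9.1}) are pointwise algebraic statements. I would prove them at each $(x,t)$, using only $0\le\ueps\le\weps=\ueps+\veps$, the $L^\infty$ bound $\veps\le M$ from Lemma \ref{lem5}, and the monotonicity of $\xi\mapsto\frac{\xi}{1+\eps\xi}$. Lemma \ref{lem8} gives $\tme=\infty$, so Lemma \ref{lem5} applies on all of $\Om\times(0,\infty)$.

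\emph{Upper bound.} Since $\frac{\xi}{1+\eps\xi}$ is nondecreasing and $\ueps\le\weps$, we have $\frac{\ueps}{1+\eps\ueps}\le\frac{\weps}{1+\eps\weps}\le\frac{\weps+1}{1+\eps\weps}$. Also $\frac{\weps+1}{1+\eps\weps}\ge1$ because $\eps<1$, so $d\le d\cdot\frac{\weps+1}{1+\eps\weps}$. Adding the two bounds gives the right inequality with factor $d+1$.

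\emph{Lower bound.} I would reuse the computation from the proof of Lemma \ref{lem4}, where it was shown that
\[
\Big(d+\frac{\ueps}{1+\eps\ueps}\Big)(\veps+1)\ge d_0\cdot\frac{\weps+1}{1+\eps\weps},
\qquad d_0=\min\{d,1\}.
\]
Dividing by $\veps+1$ and using $\veps+1\le M+1$ from (\ref{5.1}) yields
\[
d+\frac{\ueps}{1+\eps\ueps}\ge\frac{d_0}{M+1}\cdot\frac{\weps+1}{1+\eps\weps}.
\]
If a self-contained argument is preferred, one can check directly that $(d+\frac{\ueps}{1+\eps\weps})(\veps+1)(1+\eps\weps)\ge d+\ueps+d\veps\ge d_0(\weps+1)$.

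There is no real obstacle here. The only points needing care are that $M$ depends only on $K$, which holds by Lemma \ref{lem5}, and that the estimate holds for all times, which follows from the globality established in Lemma \ref{lem8}.
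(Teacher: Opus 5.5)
Your proposal is correct and takes essentially the paper's route. The upper bound is the same monotonicity argument for $\xi\mapsto\frac{\xi}{1+\eps\xi}$ together with $\frac{\weps+1}{1+\eps\weps}\ge 1$. For the lower bound, you recycle the product inequality from Lemma \ref{lem4} and divide by $\veps+1\le M+1$, whereas the paper estimates the ratio directly via $\weps+1\le\ueps+M+1$ and the monotonicity of $\frac{d_0(1+\ueps)}{\ueps+M+1}$ in $\ueps$; both rest only on $\veps\le M$ from Lemma \ref{lem5}, and your appeal to Lemma \ref{lem8} to cover all of $\Om\times(0,\infty)$ makes explicit what the paper leaves implicit.
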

\proof
  Let $\eps\in (0,1)$. Then again since $\frac{d}{d\xi} \frac{\xi}{1+\eps\xi} \ge 0$ for all $\xi\ge 0$,
  the fact that $\ueps\le \weps$ implies that
  \bas
	d + \frac{\ueps}{1+\eps\ueps}
	&\le& d + \frac{\weps}{1+\eps\weps}
	= \frac{d+d\eps\weps+\weps}{1+\eps\weps}
	\le \frac{d+1+d\weps+\weps}{1+\eps\weps} \\
	&=& (d+1) \cdot \frac{\weps +1}{1+\eps\weps}
	\qquad \mbox{in } \Om\times (0,\infty),
  \eas
  from which the right inequality in (\ref{9.1}) follows.\abs
  We next rely on Lemma \ref{lem5}, which namely asserts that once more writing $d_0:=\min\{d,1\}$ we have $\weps+1\le\ueps+M+1$
  and hence, by nonnegativity of $\weps$ and $\ueps$,
  \bas
	\frac{d+\frac{\ueps}{1+\eps\weps}}{\frac{\weps+1}{1+\eps\weps}}
	&=& \frac{d+d\eps\weps+\ueps}{\weps+1}
	\ge \frac{d+\ueps}{\ueps+M+1}
	\ge \frac{d_0+d_0\ueps}{\ueps+M+1}
	= d_0 - \frac{d_0 M}{\ueps+M+1} \\
	&\ge& d_0 - \frac{d_0 M}{M+1}
	= \frac{d_0}{M+1}
	\qquad \mbox{in } \Om\times (0,\infty).
  \eas
  This implies the left inequality in (\ref{9.1}), because clearly $d+\frac{\ueps}{1+\eps\ueps} \ge d + \frac{\ueps}{1+\eps\weps}$
  in $\Om\times (0,\infty)$.
\qed
Relying on the latter in its technical part, the following lemma records the outcome of a procedure which in its essence
can be viewed as consisting in a multiplication of the second equation in (\ref{0eps}) by the product of $\veps$ with
a function $\chi(\weps)$. It turns out that if here $\chi$ satisfies a growth condition mild enough so as to be satisfied
by functions of the form $0\le\xi\mapsto e^{(\xi+1)^\al}$ for small $\al>0$, then effects due to the cross-diffusive
action expressed in (\ref{0w}) can be limited to the appearance of integrals exclusively involving $\weps$ and its gradient:
\begin{lem}\label{lem11}
  Let $K>0$. Then there exist $\al_0(K)\in (0,1]$, $\gamma(K)>0$ and $\Gamma(K)>0$ such that whenever (\ref{init}) and (\ref{K})
  hold and $\chi\in C^2([0,\infty))$ is such that $\chi>0$ on $[0,\infty)$ as well as
  \be{11.01}
	0 \le \chi'(\xi) \le \al_0(K) \cdot \chi(\xi)
	\qquad \mbox{for all } \xi\ge 0,
  \ee
  we have
  \bea{11.1}
	& & \hs{-20mm}
	\frac{d}{dt} \io \veps^2 \chi(\weps)
	+ \gamma(K) \io \frac{\weps+1}{1+\eps\weps} \chi(\weps) |\na\veps|^2
	+ 2 \io \veps^2 \chi(\weps) \nn\\
	&\le& \Gamma(K) \io \frac{\weps+1}{1+\eps\weps} \cdot \frac{\chi'^2(\weps)+ \chi''^2(\weps)}{\chi(\weps)} \cdot |\na\weps|^2
	+ \Gamma(K) \int_{\{\chi''(\weps)<0\}} |\chi''(\weps)| \cdot |\na\weps|^2 \nn\\
	& & + \Gamma(K) \io \frac{\weps +1}{1+\eps\weps} \cdot \chi(\weps)
  \eea
  for all $t>0$ and $\eps\in (0,1)$.
\end{lem}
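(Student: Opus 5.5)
The plan is to compute $\frac{d}{dt}\io \veps^2\chi(\weps)$ directly from the second equation in (\ref{0eps}) and from (\ref{0w}), integrate by parts, and then sort the resulting terms into three groups: good dissipative terms, terms involving $\na\weps$ only (allowed on the right of (\ref{11.1})), and zero-order terms. The uniform bound $0\le\veps\le M=M(K)$ from Lemma \ref{lem5} will be used repeatedly, so that every power of $\veps$ becomes a constant depending only on $K$. The two-sided bound of Lemma \ref{lem9} will convert the diffusivity into the weight $\frac{\weps+1}{1+\eps\weps}$.

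Write $a_\eps:=d+\frac{\ueps}{1+\eps\ueps}$ and $\feps:=\frac{\ueps}{1+\eps\ueps}$. Since $\na(\veps^2\chi'(\weps))=2\veps\chi'\na\veps+\veps^2\chi''\na\weps$, integration by parts gives
\bas
	\frac{d}{dt}\io \veps^2\chi(\weps)
	&=& -2\io a_\eps\chi|\na\veps|^2 - 2\io a_\eps\veps\chi'\na\veps\cdot\na\weps
	- 2\io\veps^2\chi + 2\io\veps\chi\feps \\
	& & - D\io\veps^2\chi''|\na\weps|^2 - 2D\io\veps\chi'\na\veps\cdot\na\weps \\
	& & - 2(d-D)\io\veps\chi'|\na\veps|^2 - (d-D)\io\veps^2\chi''\na\veps\cdot\na\weps \\
	& & + \io\veps^2\chi'\cdot(-\veps+\feps).
\eas

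\textbf{Good term.} By Lemma \ref{lem9}, the first term is bounded above by $-\frac{2d_0}{M+1}\io\frac{\weps+1}{1+\eps\weps}\chi|\na\veps|^2$, where $d_0=\min\{d,1\}$. Write $I:=\io\frac{\weps+1}{1+\eps\weps}\chi|\na\veps|^2$ for the quantity it controls.

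\textbf{The term $-2(d-D)\io\veps\chi'|\na\veps|^2$.} This is where the smallness condition (\ref{11.01}) is used, and I expect it to be the decisive point. The assumption gives $|\chi'|\le\al_0\chi$, and $\frac{\weps+1}{1+\eps\weps}\ge 1$, so this term is at most $2|d-D|M\al_0\, I$. Choosing $\al_0(K):=\min\{1,\frac{d_0}{2(M+1)(|d-D|M+1)}\}$ lets it absorb at most half of the good term. Note that a sign-indefinite first-order term in $\na\veps$ with weight $\chi'$ cannot be handled any other way, which is why only slowly growing $\chi$ are admitted.

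\textbf{Cross terms.} Each is treated by Young's inequality with a small parameter chosen so that the three $|\na\veps|^2$-contributions together take at most a quarter of the good term.
\bit
\item For $-2\io a_\eps\veps\chi'\na\veps\cdot\na\weps$, use the upper bound $a_\eps\le(d+1)\frac{\weps+1}{1+\eps\weps}$. This gives $\eta I + C M^2\io\frac{\weps+1}{1+\eps\weps}\frac{\chi'^2}{\chi}|\na\weps|^2$.
\item For $-2D\io\veps\chi'\na\veps\cdot\na\weps$, the bound is $\eta I+C\io\frac{\chi'^2}{\chi}|\na\weps|^2$. Since the weight $\frac{\weps+1}{1+\eps\weps}\ge1$, this is dominated by the first integral on the right of (\ref{11.1}).
\item For $-(d-D)\io\veps^2\chi''\na\veps\cdot\na\weps$, the bound is $\eta I+CM^4\io\frac{\chi''^2}{\chi}|\na\weps|^2$, which is handled in the same way.
\eit

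\textbf{Second-order term in $\na\weps$.} The term $-D\io\veps^2\chi''|\na\weps|^2$ is nonpositive where $\chi''\ge0$. Elsewhere it is bounded by $DM^2\int_{\{\chi''(\weps)<0\}}|\chi''||\na\weps|^2$.

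\textbf{Zero-order terms.} Drop $-\io\veps^3\chi'\le 0$, which is allowed because $\chi'\ge0$. Since $\feps\le\frac{\weps}{1+\eps\weps}\le\frac{\weps+1}{1+\eps\weps}$ and $\chi'\le\al_0\chi\le\chi$, the remaining pieces satisfy $2\io\veps\chi\feps+\io\veps^2\chi'\feps\le(2M+M^2)\io\frac{\weps+1}{1+\eps\weps}\chi$. The term $-2\io\veps^2\chi$ is kept on the left.

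\textbf{Conclusion.} Collecting all of the above yields (\ref{11.1}) with $\gamma(K):=\frac{d_0}{M+1}$ and a suitable $\Gamma(K)$ built from $M$, $d$, $D$. All constants depend only on $K$ through $M(K)$, and not on $\eps$ or $\chi$.
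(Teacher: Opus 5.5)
Your proof is correct and follows the paper's proof of Lemma \ref{lem11} essentially step by step: the same expansion of $\frac{d}{dt}\io\veps^2\chi(\weps)$, Lemma \ref{lem9} for the dissipative term, the smallness of $\al_0$ to absorb $-2(d-D)\io\veps\chi'(\weps)|\na\veps|^2$, Young's inequality on the three cross terms together with $\frac{1+\eps\weps}{\weps+1}\le 1\le\frac{\weps+1}{1+\eps\weps}$, and the bound $\veps\le M$ throughout; the only deviation is that you bound $a_\eps$ from above before applying Young in the first cross term, whereas the paper uses Lemma \ref{lem9} afterwards, and the two are equivalent. One harmless slip: your own bookkeeping absorbs one half plus one quarter of the coefficient $\frac{2d_0}{M+1}$ in the dissipative term, so the leftover coefficient is $\frac{d_0}{2(M+1)}$ rather than your stated $\gamma(K)=\frac{d_0}{M+1}$, which is immaterial because any positive $\gamma(K)$ suffices.
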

\proof
  Given $K>0$, we let $M=M(K)>0$ be as in Lemma \ref{lem5}, and defining
  \be{11.2}
	\gamma\equiv \gamma(K):=\frac{\min\{d,1\}}{4\cdot (M+1)}
  \ee
  we choose $\al_0=\al_0(K)\in (0,1]$ in such a way that
  \be{11.22}
	2|d-D| \cdot M \cdot \al_0 \le \gamma.
  \ee
  Then assuming that (\ref{init}) and (\ref{K}) are valid, and that $\chi\in C^2([0,\infty))$ is positive 
  and satisfies
  (\ref{11.01}), for fixed $\eps\in (0,1)$ we integrate by parts using (\ref{0eps}) and (\ref{0w}) to compute
  \bea{11.3}
	\frac{d}{dt} \io \veps^2 \chi(\weps)
	&=& 2 \io \veps \chi(\weps) \na\cdot \Big\{ \Big(d+\frac{\ueps}{1+\eps\ueps}\Big)\na\veps\Big\}
	+ 2 \io \veps \chi(\weps) \cdot \Big\{ - \veps + \frac{\ueps}{1+\eps\ueps}\Big\} \nn\\
	& & + \io \veps^2 \chi'(\weps) \na\cdot \big\{ D\na\weps+(d-D)\na\veps\big\}
        + \io \veps^2 \chi'(\weps) \cdot \Big\{ - \veps + \frac{\ueps}{1+\eps\ueps}\Big\} \nn\\
	&=& - 2 \io \Big(d+\frac{\ueps}{1+\eps\ueps}\Big) \chi(\weps) |\na \veps|^2
	- 2 \io \Big(d+\frac{\ueps}{1+\eps\ueps}\Big) \veps \chi'(\weps) \na\veps\cdot\na\weps \nn\\
	& & - 2 \io \veps^2 \chi(\weps)
	+ 2 \io \frac{\ueps}{1+\eps\ueps} \veps \chi(\weps) \nn\\
	& & -2D \io \veps \chi'(\weps) \na\veps\cdot\na\weps
	- 2(d-D) \io \veps \chi'(\weps) |\na\veps|^2 \nn\\
	& & - D \io \veps^2 \chi''(\weps) |\na\weps|^2
	- (d-D) \io \veps^2 \chi''(\weps) \na\veps\cdot\na\weps \nn\\
    & & - \io \veps^3 \chi'(\weps)+  \io \frac{\ueps}{1+\eps\ueps}\veps^2 \chi'(\weps)
    \qquad \mbox{for all } t>0.
  \eea
  Here in view of the positivity of $\chi$, Young's inequality together with Lemma \ref{lem5} and the right inequality in (\ref{9.1})
  guarantees that
  \bas
	& & \hs{-16mm}
	- 2 \io \Big(d+\frac{\ueps}{1+\eps\ueps}\Big) \veps \chi'(\weps) \na\veps\cdot\na\weps \nn\\
	&\le& \io \Big(d+\frac{\ueps}{1+\eps\ueps}\Big) \chi(\weps) |\na\veps|^2
	+ \io \Big(d+\frac{\ueps}{1+\eps\ueps}\Big) \veps^2 \cdot \frac{\chi'^2(\weps)}{\chi(\weps)} \cdot |\na\weps|^2 \nn\\
	&\le& \io \Big(d+\frac{\ueps}{1+\eps\ueps}\Big) \chi(\weps) |\na\veps|^2
	+ (d+1)M^2 \io \frac{\weps+1}{1+\eps\weps} \cdot \frac{\chi'^2(\weps)}{\chi(\weps)} \cdot |\na\weps|^2
	\qquad \mbox{for all } t>0,
  \eas
  while thanks to the left inequality in (\ref{9.1}),
  \bas
	\io \Big(d+\frac{\ueps}{1+\eps\ueps}\Big) \chi(\weps) |\na\weps|^2
	\ge 4\gamma \io \frac{\weps+1}{1+\eps\weps} \chi(\weps) |\na\veps|^2
	\qquad \mbox{for all } t>0.
  \eas
  As $\chi'\ge 0$ and
  \bas
	- D \io \veps^2 \chi''(\weps) |\na\weps|^2
	\le D M^2 \int_{\{\chi''(\weps)<0\}} |\chi''(\weps)| \cdot |\na\weps|^2
	\qquad \mbox{for all } t>0
  \eas
  by Lemma \ref{lem5}, from (\ref{11.3}) we thus obtain that
  \bea{11.4}
	& & \hs{-20mm}
	\frac{d}{dt} \io \veps^2 \chi(\weps)
	+ 4\gamma \io \frac{\weps+1}{1+\eps\weps} \chi(\weps) |\na\veps|^2
	+ 2 \io \veps^2 \chi(\weps) \nn\\
	&\le& (d+1)M^2 \io \frac{\weps+1}{1+\eps\weps} \cdot \frac{\chi'^2(\weps)}{\chi(\weps)} \cdot |\na\weps|^2
	+ D M^2 \int_{\{\chi''(\weps)<0\}} |\chi''(\weps)| \cdot |\na\weps|^2 \nn\\
	& & -2D \io \veps \chi'(\weps) \na\veps\cdot\na\weps
	- 2(d-D) \io \veps \chi'(\weps) |\na\veps|^2 \nn\\
	& &  - (d-D) \io \veps^2 \chi''(\weps) \na\veps\cdot\na\weps
	+ 2 \io \frac{\ueps}{1+\eps\ueps} \veps \chi(\weps)
     +  \io \frac{\ueps}{1+\eps\ueps}\veps^2 \chi'(\weps)
  \eea 
  for all $t>0$,
  and here two further applications of Young's inequality show that again due to Lemma \ref{lem5},
  \bea{11.5}
	& & \hs{-30mm}
	-2D \io \veps \chi'(\weps) \na\veps\cdot\na\weps \nn\\
	&\le& \gamma \io \frac{\weps+1}{1+\eps\weps} \chi(\weps) |\na\veps|^2
	+ \frac{D^2}{\gamma} \io \veps^2 \cdot \frac{1+\eps\weps}{\weps+1} \cdot \frac{\chi'^2(\weps)}{\chi(\weps)} \cdot |\na\weps|^2
		\nn\\
	&\le& \gamma \io \frac{\weps+1}{1+\eps\weps} \chi(\weps) |\na\veps|^2
	+ \frac{D^2 M^2}{\gamma} \io \frac{1+\eps\weps}{\weps+1} \cdot \frac{\chi'^2(\weps)}{\chi(\weps)} \cdot |\na\weps|^2
  \eea
  and
  \bea{11.6}
	& & \hs{-30mm}
	- (d-D) \io \veps^2 \chi''(\weps) \na\veps\cdot\na\weps \nn\\
	&\le& \gamma \io \frac{\weps+1}{1+\eps\weps} \chi(\weps) |\na\veps|^2
	+ \frac{(d-D)^2}{4\gamma} \io \veps^4 \cdot \frac{1+\eps\weps}{\weps+1} \cdot \frac{\chi''^2(\weps)}{\chi(\weps)}
		\cdot |\na\weps|^2 \nn\\
	&\le& \gamma \io \frac{\weps+1}{1+\eps\weps} \chi(\weps) |\na\veps|^2
	+ \frac{(d-D)^2 M^4}{4\gamma} \io \frac{1+\eps\weps}{\weps+1} \cdot \frac{\chi''^2(\weps)}{\chi(\weps)} \cdot |\na\weps|^2
  \eea
  for all $t>0$. Noting that $\frac{1+\eps\xi}{\xi+1} \le 1$ for all $\xi\ge 0$, we may estimate
  \be{11.66}
	\frac{1+\eps\weps}{\weps+1} \le 1 \le \frac{\weps+1}{1+\eps\weps}
	\qquad \mbox{in } \Om\times (0,\infty)
  \ee
  to see that (\ref{11.5}) and (\ref{11.6}) imply that for all $t>0$,
  \bea{11.7}
	& & \hs{-30mm}
	-2D \io \veps \chi'(\weps) \na\veps\cdot\na\weps
	- (d-D) \io \veps^2 \chi''(\weps) \na\veps\cdot\na\weps \nn\\
	&\le& 2 \gamma \io \frac{\weps+1}{1+\eps\weps} \chi(\weps) |\na\veps|^2
	+ \frac{D^2 M^2}{\gamma} \io \frac{\weps+1}{1+\eps\weps} \cdot \frac{\chi'^2(\weps)}{\chi(\weps)} \cdot |\na\weps|^2 \nn\\
	& & + \frac{(d-D)^2 M^4}{4\gamma} \io \frac{\weps+1}{1+\eps\weps} \cdot \frac{\chi''^2(\weps)}{\chi(\weps)} \cdot |\na\weps|^2.
  \eea
  Apart from that, we may control the fourth
   to last summand in (\ref{11.4}) by combining (\ref{11.01}) with (\ref{11.22}),
  according to which, namely, it follows that again due to Lemma \ref{lem5} and (\ref{11.66}),
  \bea{11.8}
	- 2(d-D) \io \veps \chi'(\weps) |\na\veps|^2
	&\le& 2|d-D| \cdot M \io \chi'(\weps) |\na\veps|^2 \nn\\
	&\le& 2|d-D| \cdot M \cdot \al_0 \io \chi(\weps) |\na\veps|^2 \nn\\
	&\le& 2|d-D| \cdot M \cdot \al_0 \io \frac{\weps+1}{1+\eps\weps} \cdot \chi(\weps) |\na\veps|^2 \nn\\
	&\le& \gamma \io \frac{\weps+1}{1+\eps\weps} \cdot \chi(\weps) |\na\veps|^2
	\qquad \mbox{for all } t>0.
  \eea
  Since, finally,
  \bas
	2 \io \frac{\ueps}{1+\eps\ueps} \veps \chi(\weps)
	+  \io \frac{\ueps}{1+\eps\ueps}\veps^2 \chi'(\weps)
	&\le& (2M +\al _0 M^2) \io \frac{\weps}{1+\eps\weps} \cdot \chi(\weps) \\
	&\le& (2M +\al _0 M^2) \io \frac{\weps+1}{1+\eps\weps} \cdot \chi(\weps)
	\qquad \mbox{for all } t>0
  \eas
  by Lemma \ref{lem5}
  and (\ref{11.01}) as well as
  the upward monotonicity of $0\le\xi\mapsto \frac{\xi}{1+\eps\xi}$, from (\ref{11.4}), (\ref{11.7})
  and (\ref{11.8}) we readily infer that (\ref{11.1}) holds if we let
  $\Gamma(K):=\max \Big\{ (d+1)M^2 + \frac{D^2 M^2}{\gamma}, \, \frac{(d-D)^2 M^4}{4\gamma}, \, D M^2,$
  $ 2M +\al_0 M^2 \Big\}$.
\qed
Now the particular structure of the first integral on the right of (\ref{34.1}) suggests to here choose the function $\chi$
to be a member of the family characterized in the following lemma.
\begin{lem}\label{lem35}
  Let $\al\in (0,1]$ and
  \be{35.1}
	\chi(\xi):=e^{(\xi+1)^\al},
	\qquad \xi\ge 0.
  \ee
  Then
  \be{35.2}
	\chi'(\xi)=\al(\xi+1)^{\al-1} e^{(\xi+1)^\al}
	\qquad \mbox{for all } \xi\ge 0
  \ee
  and
  \be{35.3}
	\frac{\al^2}{2} (\xi+1)^{2\al-2} e^{(\xi+1)^\al} - 2e^\frac{2}{\al}
	\le \chi''(\xi) \le \al^2(\xi+1)^{2\al-1} e^{(\xi+1)^\al}
	\qquad \mbox{for all } \xi\ge 0,
  \ee
  and for each $\eps\in (0,1)$ we have
  \be{35.4}
	\frac{\xi+1}{1+\eps\xi} \cdot \frac{\chi'^2(\xi)+\chi''^2(\xi)}{\chi(\xi)}
	\le 2\al^2 \cdot \frac{(\xi+1)^{2\al-1}}{1+\eps\xi} \cdot e^{(\xi+1)^\al}
	+ 4e^\frac{2}{\al}
	\qquad \mbox{for all } \xi\ge 0
  \ee
  and
  \be{35.5}
	\frac{\xi+1}{1+\eps\xi} \cdot \chi(\xi)
	\le \al \cdot \frac{(\xi+1)^{\al+1}}{1+\eps\xi} \cdot e^{(\xi+1)^\al}
	+ \Big(\frac{e}{\al}\Big)^\frac{1}{\al}
	\qquad \mbox{for all } \xi\ge 0
  \ee
  as well as
  \be{35.6}
	\frac{\xi+1}{1+\eps\xi} \cdot \chi'(\xi)
	\le \al \cdot \frac{(\xi+1)^{\al+1}}{1+\eps\xi} \cdot e^{(\xi+1)^\al}
	\qquad \mbox{for all } \xi\ge 0.
  \ee
\end{lem}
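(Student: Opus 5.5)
All six assertions are elementary one-variable estimates. I would verify them directly from the explicit form of $\chi$, splitting into cases only where an additive constant is needed, in the same way as in Lemma \ref{lem31} and Lemma \ref{lem333}. Throughout I would use $\al\in(0,1]$ and $\xi+1\ge 1$. The latter gives $(\xi+1)^{\beta}\le(\xi+1)^{\gamma}$ whenever $\beta\le\gamma$.

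\textbf{Derivatives.} Formula (\ref{35.2}) is immediate by the chain rule. Differentiating once more gives
\bas
	\chi''(\xi)=\al(\xi+1)^{\al-2}e^{(\xi+1)^\al}\cdot\big\{\al(\xi+1)^\al-(1-\al)\big\},
	\qquad \xi\ge 0.
\eas
For the right inequality in (\ref{35.3}) I would drop the nonpositive contribution $-\al(1-\al)(\xi+1)^{\al-2}e^{(\xi+1)^\al}$. Then I would use $(\xi+1)^{2\al-2}\le(\xi+1)^{2\al-1}$.

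For the left inequality in (\ref{35.3}) I would split into two cases.
\begin{itemize}
\item If $(\xi+1)^\al\ge\frac{2}{\al}$, the ratio of $\al(1-\al)(\xi+1)^{\al-2}e^{(\xi+1)^\al}$ to $\frac{\al^2}{2}(\xi+1)^{2\al-2}e^{(\xi+1)^\al}$ equals $\frac{2(1-\al)}{\al(\xi+1)^\al}$. This is at most $1$, so $\chi''\ge\frac{\al^2}{2}(\xi+1)^{2\al-2}e^{(\xi+1)^\al}$.
\item If $(\xi+1)^\al<\frac{2}{\al}$, the negative part is bounded by $\al(1-\al)e^{(\xi+1)^\al}\le e^{2/\al}$. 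This is absorbed by the constant $2e^{2/\al}$.
\end{itemize}

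\textbf{Estimate (\ref{35.4}).} First, $\frac{\chi'^2}{\chi}=\al^2(\xi+1)^{2\al-2}e^{(\xi+1)^\al}$. Multiplying by $\frac{\xi+1}{1+\eps\xi}$ gives exactly $\al^2\frac{(\xi+1)^{2\al-1}}{1+\eps\xi}e^{(\xi+1)^\al}$.

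For the $\chi''$ term, the factorized formula above gives
\bas
	|\chi''(\xi)|\le \al(\xi+1)^{\al-2}e^{(\xi+1)^\al}\cdot\max\big\{\al(\xi+1)^\al,1\big\}.
\eas
Hence $(\xi+1)\frac{\chi''^2}{\chi}$ is bounded by $\al^2(\xi+1)^{2\al-3}e^{(\xi+1)^\al}\max\{\al^2(\xi+1)^{2\al},1\}$. In the case $\al(\xi+1)^\al\ge 1$ this equals $\al^4(\xi+1)^{4\al-3}e^{(\xi+1)^\al}$. That is at most $\al^2(\xi+1)^{2\al-1}e^{(\xi+1)^\al}$, because $\al\le1$ and $4\al-3\le 2\al-1$. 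In the other case it is directly at most $\al^2(\xi+1)^{2\al-1}e^{(\xi+1)^\al}$. Keeping the factor $\frac{1}{1+\eps\xi}$ throughout and adding the two contributions yields (\ref{35.4}); the constant $4e^{2/\al}$ is not even needed.

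\textbf{Estimates (\ref{35.5}) and (\ref{35.6}).} The left-hand side of (\ref{35.5}) is precisely $\rho_\eps(\xi)$ from (\ref{333.2}). However, I would not cite (\ref{333.02}), because Lemma \ref{lem333} assumed (\ref{333.1}). Instead I would reproduce its short two-case argument, which does not use (\ref{333.1}).
\begin{itemize}
\item If $(\xi+1)^\al\ge\frac1\al$, then $\frac{\xi+1}{1+\eps\xi}e^{(\xi+1)^\al}\le\al\frac{(\xi+1)^{\al+1}}{1+\eps\xi}e^{(\xi+1)^\al}$.
\item Otherwise the left-hand side is at most $(\xi+1)e^{(\xi+1)^\al}\le(\frac1\al)^{1/\al}e^{1/\al}=(\frac{e}{\al})^{1/\al}$.
\end{itemize}

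For (\ref{35.6}), by (\ref{35.2}) the left-hand side equals $\al\frac{(\xi+1)^\al}{1+\eps\xi}e^{(\xi+1)^\al}$. This is bounded by the right-hand side since $(\xi+1)^\al\le(\xi+1)^{\al+1}$.

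\textbf{Main obstacle.} There is no genuine obstacle here. The only point needing care is the bookkeeping in (\ref{35.4}). The sign change of $\chi''$ forces the $\max$-type bound on $|\chi''|$, and the exponent comparisons there rely on $\al\le1$.
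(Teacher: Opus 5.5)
Your proposal is correct and follows essentially the paper's route: the same explicit formula for $\chi''$, the same case splits at $(\xi+1)^\al=\frac{2}{\al}$ for (\ref{35.3}) and at $(\xi+1)^\al=\frac{1}{\al}$ for (\ref{35.5}), and the same trivial comparison for (\ref{35.6}). The only deviation is in (\ref{35.4}): the paper splits by the sign of $\chi''$ and bounds the region $\{\chi''\le 0\}$ by the constant $4e^{2/\al}$, whereas your bound $|\chi''|\le \al(\xi+1)^{\al-2}e^{(\xi+1)^\al}\max\{\al(\xi+1)^\al,1\}$ compares directly with the main term and correctly shows that this additive constant is not needed.
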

\proof
  Differentiating in (\ref{35.1}) yields (\ref{35.2}) as well as the identity
  \be{35.7}
	\chi''(\xi)
	= \al^2 (\xi+1)^{2\al-2} e^{(\xi+1)^\al}
	- \al(1-\al) (\xi+1)^{\al-2} e^{(\xi+1)^\al}
	\qquad \mbox{for all } \xi\ge 0,
  \ee
  where in the case when $\xi\ge 0$ satisfies $(\xi+1)^\al\ge\frac{2}{\al}$, we see that
  \bas
	\frac{\al(1-\al)(\xi+1)^{\al-2} e^{(\xi+1)^\al}}{\al^2 (\xi+1)^{2\al-2} e^{(\xi+1)^\al}}
	= \frac{1-\al}{\al} \cdot \frac{1}{(\xi+1)^\al}
	\le \frac{1-\al}{2} \le \frac{1}{2}
  \eas
  and hence, in particular,
  \be{35.8}
	\frac{\al^2}{2} (\xi+1)^{2\al-2} e^{(\xi+1)^\al}
	\le \chi''(\xi) \le \al^2 (\xi+1)^{2\al-2} e^{(\xi+1)^\al}
	\qquad \mbox{for all $\xi\ge 0$ fulfilling $(\xi+1)^\al \ge \frac{2}{\al}$.}
  \ee
  On the other hand, for any $\xi\ge 0$ satisfying $(\xi+1)^\al<\frac{2}{\al}$ we have
  \be{35.9}
	\al(1-\al)(\xi+1)^{\al-2} e^{(\xi+1)^\al}
	\le \al(\xi+1)^\al e^{(\xi+1)^\al}
	\le \al\cdot\frac{2}{\al} \cdot e^\frac{2}{\al} = 2e^\frac{2}{\al}
  \ee
  and, apart from that,
  \bea{35.10}
	\sqrt{\frac{\xi+1}{1+\eps\xi}} \cdot \frac{\al(1-\al)(\xi+1)^{\al-2} e^{(\xi+1)^\al}}{\sqrt{\chi(\xi)}}
	&=& \al(1-\al)\cdot \frac{(\xi+1)^{\al-\frac{3}{2}}}{\sqrt{1+\eps\xi}} \cdot e^{\frac{1}{2}(\xi+1)^\al} \nn\\
	&\le& \al\cdot (\xi+1)^\al \cdot e^{\frac{1}{2}(\xi+1)^\al} \nn\\
	&\le& \al\cdot\frac{2}{\al} \cdot e^\frac{1}{\al}
	= 2 e^\frac{1}{\al}
	\qquad \mbox{for all } \eps\in (0,1).
  \eea
  Now (\ref{35.9}) together with (\ref{35.7}) shows that
  \bas
	\chi''(\xi)
	\ge \al^2(\xi+1)^{2\al-2} e^{(\xi+1)^\al} - 2e^\frac{2}{\al}
	\qquad \mbox{for all $\xi\ge 0$ such that $(\xi+1)^\al<\frac{2}{\al}$,}
  \eas
  which combined with (\ref{35.8}) establishes (\ref{35.3}).\abs
  Apart from that, (\ref{35.10}) along with (\ref{35.8}) and (\ref{35.7}) implies that whenever $\xi\ge 0$ is such that
  $\chi''(\xi)\le 0$,
  \be{35.11}
	\frac{\xi+1}{1+\eps\xi} \cdot \frac{\chi''^2(\xi)}{\chi(\xi)}
	\le 4 e^\frac{2}{\al}
	\qquad \mbox{for all } \eps\in (0,1),
  \ee
  while within $\{\chi''>0\}$ it follows from (\ref{35.7}) and the inequality $\al\le 1$ that
  \bea{35.12}
	\frac{\xi+1}{1+\eps\xi} \cdot \frac{\chi''^2(\xi)}{\chi(\xi)}
	&\le& \frac{\xi+1}{1+\eps\xi} \cdot \frac{\big\{ \al^2 (\xi+1)^{2\al-2} e^{(\xi+1)^\al} \big\}^2}{e^{(\xi+1)^\al}} \nn\\
	&=& \al^4\cdot \frac{(\xi+1)^{4\al-3}}{1+\eps\xi} \cdot e^{(\xi+1)^\al} \nn\\
	&\le& \al^2 \cdot \frac{(\xi+1)^{2\al-1}}{1+\eps\xi} \cdot e^{(\xi+1)^\al}
	\qquad \mbox{for all } \eps\in (0,1).
  \eea
  As clearly
  \bas
	\frac{\xi+1}{1+\eps\xi} \cdot \frac {\chi'^2(\xi)}{\chi(\xi)}
	= \al^2 \cdot \frac{(\xi+1)^{2\al-1}}{1+\eps\xi} \cdot e^{(\xi+1)^\al}
	\qquad \mbox{for all $\xi\ge 0$ and } \eps\in (0,1)
  \eas
  by (\ref{35.2}), from (\ref{35.11}) and (\ref{35.12}) we infer (\ref{35.4}) for arbitrary $\eps\in (0,1)$.\abs
  Finally, given $\xi\ge 0$ we obtain from (\ref{35.1}) that if $(\xi+1)^\al\ge\frac{1}{\al}$, then
  \bas
	\frac{\frac{\xi+1}{1+\eps\xi} \cdot \chi(\xi)}{\al\cdot\frac{(\xi+1)^{\al+1}}{1+\eps\xi}\cdot e^{(\xi+1)^\al}}
	= \frac{1}{\al} \cdot \frac{1}{(\xi+1)^\al}
	\le 1
	\qquad \mbox{for all } \eps\in (0,1),
  \eas
  while if $(\xi+1)^\al < \frac{1}{\al}$, then
  \bas
	\frac{\xi+1}{1+\eps\xi} \cdot\chi(\xi)
	\le (\xi+1) e^{(\xi+1)^\al}
	\le \Big(\frac{1}{\al}\Big)^\frac{1}{\al} \cdot e^\frac{1}{\al}
	\qquad \mbox{for all } \eps\in (0,1).
  \eas
  This confirms (\ref{35.5}), whereas (\ref{35.6}) can directly be derived from (\ref{35.2}) by trivially estimating
  $(\xi+1)^\al \le (\xi+1)^{\al+1}$ for $\xi\ge 0$.
\qed
Indeed, when spelt out for functions of this form, Lemma \ref{lem11} leads to the main result of this section:
\begin{lem}\label{lem36}
  Given $K>0$, let $\al_0(K)$, $\gamma(K)$ and $\Gamma(K)$ be as in Lemma \ref{lem11},
  and let $\al\in (0,\al_0(K)]$. Then there exists $\Gamma_2(\al,K)>0$ such that
  if (\ref{init}) and (\ref{K}) hold, it follows that
  \bea{36.1}
	& & \hs{-20mm}
	\frac{d}{dt} \io \veps^2 e^{(\weps+1)^\al}
	+ \gamma(K) \io \frac{\weps+1}{1+\eps\weps} \cdot e^{(\weps+1)^\al} |\na\veps|^2
	+ 2 \io \veps^2 e^{(\weps+1)^\al} \nn\\
	&\le& 2 \Gamma(K) \al^2 \io \frac{(\weps+1)^{2\al-1}}{1+\eps\weps} \cdot e^{(\weps+1)^\al} |\na\weps|^2
	+ \Gamma(K) \al \io \frac{(\weps+1)^{\al+1}}{1+\eps\weps} \cdot e^{(\weps+1)^\al} \nn\\
	& & + \Gamma_2(\al,K) \io |\na\weps|^2
	+ \Gamma_2(\al,K)
  \eea
  for all $t>0$ and $\eps\in (0,1)$.
\end{lem}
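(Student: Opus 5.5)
We apply Lemma \ref{lem11} with $\chi(\xi):=e^{(\xi+1)^\al}$ as in Lemma \ref{lem35}, and then convert each term on the right of (\ref{11.1}) into the three shapes appearing in (\ref{36.1}) using the pointwise inequalities of Lemma \ref{lem35}.

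First we check the hypotheses of Lemma \ref{lem11}. Clearly $\chi\in C^2([0,\infty))$ and $\chi>0$. By (\ref{35.2}),
\[
0\le\chi'(\xi)=\al(\xi+1)^{\al-1}\chi(\xi)\le \al\chi(\xi)\le \al_0(K)\chi(\xi),
\]
since $\al\le 1$ forces $(\xi+1)^{\al-1}\le 1$, and since $\al\le\al_0(K)$. Hence (\ref{11.01}) holds, and (\ref{11.1}) is available for all $t>0$ and $\eps\in(0,1)$. Its left-hand side is exactly the left-hand side of (\ref{36.1}), so only the three right-hand integrals need to be treated.

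\textbf{First integral.} By (\ref{35.4}),
\[
\Gamma\io \frac{\weps+1}{1+\eps\weps}\cdot\frac{\chi'^2(\weps)+\chi''^2(\weps)}{\chi(\weps)}\,|\na\weps|^2
\le 2\Gamma\al^2\io \frac{(\weps+1)^{2\al-1}}{1+\eps\weps}\, e^{(\weps+1)^\al}|\na\weps|^2+4\Gamma e^{\frac{2}{\al}}\io|\na\weps|^2 .
\]
This yields precisely the first term of (\ref{36.1}) together with a contribution to $\Gamma_2\io|\na\weps|^2$.

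\textbf{Second integral.} On the set $\{\chi''(\weps)<0\}$ the left inequality in (\ref{35.3}) gives $|\chi''(\weps)|=-\chi''(\weps)\le 2e^{\frac{2}{\al}}$, because the remaining term $\frac{\al^2}{2}(\xi+1)^{2\al-2}e^{(\xi+1)^\al}$ is nonnegative. Therefore this integral is at most $2\Gamma e^{\frac{2}{\al}}\io|\na\weps|^2$.

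\textbf{Third integral.} By (\ref{35.5}) it is bounded by
\[
\Gamma\al\io \frac{(\weps+1)^{\al+1}}{1+\eps\weps}\,e^{(\weps+1)^\al}+\Gamma\Big(\frac{e}{\al}\Big)^{\frac{1}{\al}}|\Om| .
\]
This is the second term of (\ref{36.1}) plus a constant.

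Collecting the three estimates, (\ref{36.1}) follows with
\[
\Gamma_2(\al,K):=\max\Big\{6\Gamma(K)e^{\frac{2}{\al}},\ \Gamma(K)\big(\tfrac{e}{\al}\big)^{\frac{1}{\al}}|\Om|\Big\}.
\]
The main point needing care is the sign bookkeeping for $\chi''$, namely that only the bounded negative part enters the second integral. This is already secured by (\ref{35.3}), so the whole argument reduces to substituting the estimates of Lemma \ref{lem35} into (\ref{11.1}).
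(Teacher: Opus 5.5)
Your proof is correct and follows the paper's argument exactly. You verify (\ref{11.01}) for $\chi(\xi)=e^{(\xi+1)^\al}$ via (\ref{35.2}), apply Lemma \ref{lem11}, and convert the three right-hand integrals using (\ref{35.4}), (\ref{35.3}) and (\ref{35.5}); your bound $|\chi''|\le 2e^{\frac{2}{\al}}$ on $\{\chi''<0\}$ is slightly sharper than the paper's $4e^{\frac{4}{\al}}$, which only affects the value of $\Gamma_2(\al,K)$.
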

\proof
  We let $\chi$ be as defined in Lemma \ref{lem35} and note that then, by (\ref{35.2}),
  \bas
	0 \le\chi'(\xi) \le \al(\xi+1)^{\al-1} e^{(\xi+1)^\al} \le \al e^{(\xi+1)^\al} =\al\chi(\xi) \le \al_0(K)\chi(\xi)
	\qquad \mbox{for all } \xi\ge 0,
  \eas
  so that since additionally $\al_0(K)\le 1$, we may combine Lemma \ref{lem11} with (\ref{35.1}) to see that
  \bea{36.2}
	& & \hs{-20mm}
	\frac{d}{dt} \io \veps^2 e^{(\weps+1)^\al}
	+ \gamma(K) \io \frac{\weps+1}{1+\eps\weps} \cdot e^{(\weps+1)^\al} |\na\veps|^2
	+ 2 \io \veps^2 e^{(\weps+1)^\al} \nn\\
	&\le& \Gamma(K) \io \frac{\weps+1}{1+\eps\weps} \cdot \frac{\chi'^2(\weps)+\chi''^2(\weps)}{\chi(\weps)} \cdot |\na\weps|^2
	+ \Gamma(K) \int_{\{\chi''(\weps<0\}} |\chi''(\weps)| \cdot |\na\weps|^2 \nn\\
	& & + \Gamma(K) \io \frac{\weps+1}{1+\eps\weps} \cdot \chi(\weps)
	\qquad \mbox{for all $t>0$ and } \eps\in (0,1).
  \eea
  Here, (\ref{35.4}) ensures that
  \bea{36.3}
	\Gamma(K) \io \frac{\weps+1}{1+\eps\weps} \cdot \frac{\chi'^2(\weps)+\chi''^2(\weps)}{\chi(\weps} \cdot |\na\weps|^2
	&\le& 2\Gamma(K) \al^2 \io \frac{(\weps+1)^{2\al-1}}{1+\eps\weps} \cdot e^{(\weps+1)^\al} |\na\weps|^2 \nn\\
	& & + 4e^\frac{2}{\al} \Gamma(K) \io |\na\weps|^2
  \eea
  for all $t>0$ and $\eps\in (0,1)$,
  while from (\ref{35.3}) we know that
  \be{36.4}
	\Gamma(K) \int_{\{\chi''(\weps<0\}} |\chi''(\weps)| \cdot |\na\weps|^2
	\le 4e^\frac{4}{\al} \Gamma(K) \io |\na\weps|^2
	\qquad \mbox{for all $t>0$ and } \eps\in (0,1).
  \ee
  Since (\ref{35.5}) warrants that
  \bas
	\Gamma(K) \io \frac{\weps+1}{1+\eps\weps} \cdot \chi(\weps)
	\le \Gamma(K) \al \io \frac{(\weps+1)^{\al+1}}{1+\eps\weps} \cdot e^{(\weps+1)^\al}
	+ \Big(\frac{e}{\al}\Big)^\frac{1}{\al} \Gamma(K) |\Om|
  \eas
  for all $t>0$ and $\eps\in (0,1)$,
  a combination of (\ref{36.2}) with (\ref{36.3}) and (\ref{36.4}) leads to (\ref{36.1}) with		
  $\Gamma_2(\al,K):=
  \max\Big\{ 4(e^\frac{2}{\al}+ e^\frac{4}{\al}) \Gamma(K) \, , \, (\frac{e}{\al})^\frac{1}{\al} \Gamma(K) |\Om| \Big\}$.
\qed
\subsection{Estimating $\io e^{u^\al}$ for small $\al$. Conclusion}
We are thus prepared to establish an approximate version of our main estimate announced in Theorem \ref{theo14},
which indeed can be obtained by combining Lemma \ref{lem34} with Lemma \ref{lem36}, and estimating 
the second to last summand in (\ref{34.1}) by means of the interpolation inequality from Lemma \ref{lem32}.
\begin{lem}\label{lem37}
  Let $K>0$. Then there exist $\al=\al(K)>0$, $C(K)>0$ and $\eps_0=\eps_0(K)\in (0,1)$ such that if (\ref{init}) and (\ref{K}) hold,
  then it follows that
  \be{13.1}
	\io e^{\ueps^\al(\cdot,t)} \le C(K)
	\qquad \mbox{for all $t>0$ and } \eps\in (0,\eps_0).
  \ee
\end{lem}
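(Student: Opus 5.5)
We plan to combine Lemma \ref{lem34} with $b$ times Lemma \ref{lem36}, in the form suggested by (\ref{003}). For $\al\in(0,\al_0(K)]$ small and $\eps$ with $\eps^\al\le\frac{\al}{2}$, we consider
\[
	\yeps(t):=\io \veps^2 e^{(\weps+1)^\al} + b\io \frac{\weps+1}{1+\eps\weps}\, e^{(\weps+1)^\al}.
\]
The first step is to fix $b=b(K)>0$ so that the bad Dirichlet-type integral in (\ref{34.1}) is absorbed. Multiplying (\ref{34.1}) by $b$ produces $\frac{3b(d-D)^2\al^2}{2D}\io \frac{(\weps+1)^{2\al-1}}{1+\eps\weps}e^{(\weps+1)^\al}|\na\veps|^2$ on the right. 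Since $2\al-1\le 1$ for $\al\le1$, this is bounded by the same expression with weight $\frac{\weps+1}{1+\eps\weps}$. It is therefore controlled by half of the dissipation $\gamma(K)\io\frac{\weps+1}{1+\eps\weps}e^{(\weps+1)^\al}|\na\veps|^2$ from (\ref{36.1}), provided $\frac{3b(d-D)^2\al^2}{2D}\le\frac{\gamma}{2}$. We take, say, $b:=1$ and require $\al$ small enough; alternatively, $b$ can be fixed first with $\al$ then chosen small. In either case the relevant condition is that $\al^2$ is small relative to $\gamma/b$.

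The second step is to compare the remaining $w$-gradient terms. The dissipation $\frac{bD\al^2}{4}\io\frac{(\weps+1)^{2\al-1}}{1+\eps\weps}e^{(\weps+1)^\al}|\na\weps|^2$ must dominate $2\Gamma(K)\al^2$ times the same integral coming from (\ref{36.1}). Both terms scale with $\al^2$, so $b$ cannot be taken small; we fix
\[
	b:=\frac{16\Gamma(K)}{D}
\]
and afterwards choose $\al$ small so that $\frac{3b(d-D)^2\al^2}{2D}\le\frac{\gamma}{2}$. Half of the $w$-dissipation then survives, namely $c\,\al^2 I_\eps$ with $c:=\frac{bD}{8}$ and $I_\eps:=\io\frac{(\weps+1)^{2\al-1}}{1+\eps\weps}e^{(\weps+1)^\al}|\na\weps|^2$. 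What remains on the right is
\[
	(3b+\Gamma)\al\io\frac{(\weps+1)^{\al+1}}{1+\eps\weps}e^{(\weps+1)^\al},
\]
together with $c(\al,K)\io|\na\veps|^2+c(\al,K)\io|\na\weps|^2+c(\al,K)$.

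The third step, which is the main obstacle, uses Lemma \ref{lem32} with $\mu=2K|\Om|+|\Om|$ from (\ref{mass}) and $\al<\min\{1,\frac2n\}$. It bounds the superlinear term by
\[
	(3b+\Gamma)\al\Lam_3(\mu)\al^2 I_\eps+\Lam_4\io|\na\weps|^2+\Lam_4 .
\]
The crucial point is the extra factor $\al$: the coefficient is $(3b+\Gamma)\Lam_3\al^3$, which is at most $\frac{c}{2}\al^2$ once $\al$ is small, since $b$, $\Gamma$ and $\Lam_3$ are independent of $\al$. We must check that all constant dependencies are ordered correctly, namely $b$ and $\Lam_3$ fixed before $\al$, and that $\Lam_4(\al,\mu)$ is harmless because it multiplies only $\io|\na\weps|^2$. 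After this absorption we obtain
\[
	\yeps'+\min\{1,2\}\,\yeps\le h_\eps(t):=C\io|\na\weps|^2+C\io|\na\veps|^2+C .
\]
Here we use that (\ref{34.1}) yields $+b\io\frac{\weps+1}{1+\eps\weps}e^{(\weps+1)^\al}$ and (\ref{36.1}) yields $2\io\veps^2e^{(\weps+1)^\al}$. By Lemma \ref{lem8} and (\ref{4.01})--(\ref{4.02}), with $\tau_\eps=1$, we have $\int_t^{t+1}h_\eps\le C$ for all $t$.

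Finally, a standard ODE lemma for $y'+y\le h$ with $h$ having uniformly bounded integrals over unit intervals gives $\yeps(t)\le \yeps(0)+C'$. By (\ref{K}), $\yeps(0)\le b(2K+1)e^{(2K+1)^\al}|\Om|+K^2e^{(2K+1)^\al}|\Om|$. Since $\frac{\xi+1}{1+\eps\xi}\ge1$ and $\ueps\le\weps$, we have $e^{\ueps^\al}\le e^{(\weps+1)^\al}\le\frac{\weps+1}{1+\eps\weps}e^{(\weps+1)^\al}$, which yields (\ref{13.1}). The condition $\eps^\al\le\frac{\al}{2}$ defines $\eps_0(K):=(\frac{\al}{2})^{1/\al}$.
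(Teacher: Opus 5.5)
Your proposal is correct and follows the paper's proof essentially step for step: the same functional $\io \veps^2 e^{(\weps+1)^\al} + b\io \frac{\weps+1}{1+\eps\weps}e^{(\weps+1)^\al}$ with $b=16\Gamma(K)/D$, absorption of the $|\na\veps|^2$-term into $\gamma(K)$ for small $\al$, control of the superlinear term by Lemma \ref{lem32} using the extra factor $\al$ (with $\Lam_3$ independent of $\al$), and the ODE comparison based on the unit-interval bounds from Lemma \ref{lem4} with $\tau_\eps=1$. The tentative choice $b:=1$ in your first step is superseded by your second step, and the minor variations (taking $\mu=2K|\Om|+|\Om|$ instead of $2K|\Om|$, or absorbing only into half of the dissipative coefficients) are harmless.
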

\proof
  We fix $K>0$ and let $\al_0=\al_0(K), \gamma=\gamma(K)$ and $\Gamma=\Gamma(K)$ from Lemma \ref{lem11}, and abbreviating
  \be{37.02}
	c_1\equiv c_1(K):=\Lam_3\big( 2K|\Om| \big)
  \ee
  with $\Lam_3(\cdot)$ as provided by Lemma \ref{lem32}, we set
  \be{37.2}
	b\equiv b(K):=\frac{16\Gamma}{D},
  \ee
  choose $\al=\al(K)\in (0,\min\{1,\frac{2}{n}\})$ small enough fulfilling
  \be{37.3}
	\al^2 \le \frac{2D\gamma}{3b(d-D)^2}
  \ee
  as well as
  \be{37.4}
	\al \le \frac{bD}{8c_1\cdot (3b+\Gamma)},
  \ee
  and fix $\eps_0=\eps_0(K)\in (0,1)$ in such a way that $\eps_0^\al \le \frac{\al}{2}$.
  Taking $\Gamma_2=\Gamma_2(\al,K)$ from Lemma \ref{lem36} and assuming (\ref{init}) as well as (\ref{K}),
  we may then invoke Lemma \ref{lem34} along with Lemma \ref{lem36} to find that
  \bas
	& & \hs{-30mm}
	\frac{d}{dt} \bigg\{ b \io \frac{\weps+1}{1+\eps\weps} \cdot e^{(\weps+1)^\al}
	+ \io \veps^2 e^{(\weps+1)^\al} \bigg\}
	+ b \io \frac{\weps+1}{1+\eps\weps} \cdot e^{(\weps+1)^\al}
	+ 2 \io \veps^2 e^{(\weps+1)^\al} \nn\\
	& & + \frac{b D \al^2}{4} \io \frac{(\weps+1)^{2\al-1}}{1+\eps\weps} \cdot e^{(\weps+1)^\al} |\na\weps|^2
	+ \gamma \io \frac{\weps+1}{1+\eps\weps} \cdot e^{(\weps+1)^\al} |\na\veps|^2 \nn\\
	&\le& \frac{3b(d-D)^2 \al^2}{2D} \io \frac{(\weps+1)^{2\al-1}}{1+\eps\weps} \cdot e^{(\weps+1)^\al} |\na\veps|^2
	+ \frac{b(d-D)^2 e^\frac{1}{\al}}{2D} \io |\na\veps|^2 \nn\\
	& & + 3b\al \io \frac{(\weps+1)^{\al+1}}{1+\eps\weps} \cdot e^{(\weps+1)^\al}
	+ 2b\Big(\frac{e}{\al}\Big)^\frac{1}{\al} \cdot |\Om| \nn\\
	& & + 2\Gamma \al^2 \io \frac{(\weps+1)^{2\al-1}}{1+\eps\weps} \cdot e^{(\weps+1)^\al} |\na\weps|^2
	+ \Gamma \al \io \frac{(\weps+1)^{\al+1}}{1+\eps\weps} \cdot e^{(\weps+1)^\al} \nn\\
	& & + \Gamma_2 \io |\na\weps|^2
	+ \Gamma_2
	\qquad \mbox{for all $t>0$ and } \eps\in (0,\eps_0),
  \eas
  where we note that
  \bas
	\frac{bD\al^2}{4} - 2\Gamma\al^2 = \frac{bD\al^2}{8}
  \eas
  by (\ref{37.2}), and that
  \bas
	& & \hs{-20mm}
	\frac{3b(d-D)^2 \al^2}{2D} \io \frac{(\weps+1)^{2\al-1}}{1+\eps\weps} \cdot e^{(\weps+1)^\al} |\na\veps|^2 \nn\\
	&\le& \frac{3b(d-D)^2 \al^2}{2D} \io \frac{\weps+1}{1+\eps\weps} \cdot e^{(\weps+1)^\al} |\na\veps|^2 \\
	&\le& \gamma \io \frac{\weps+1}{1+\eps\weps} \cdot e^{(\weps+1)^\al} |\na\veps|^2
	\qquad \mbox{for all $t>0$ and } \eps\in (0,\eps_0)
  \eas
  according to (\ref{37.3}) and the fact that $\al\le 1$.
  Rearranging and trivially estimating $2\io \veps^2 e^{(\weps+1)^\al} \ge \io \veps^2 e^{(\weps+1)^\al}$
  for $t>0$ and $\eps\in (0,\eps_0)$, we thus infer that for
  \bas
	\yeps(t):=b \io \frac{\weps+1}{1+\eps\weps} \cdot e^{(\weps+1)^\al}
	+ \io \veps^2 e^{(\weps+1)^\al},
	\qquad t\ge 0, \ \eps\in (0,\eps_0),
  \eas
  we have
  \bea{37.5}
	& & \hs{-30mm}
	\yeps'(t) + \yeps(t)
	+ \frac{bD\al^2}{8} \io \frac{(\weps+1)^{2\al-1}}{1+\eps\weps} \cdot e^{(\weps+1)^\al} |\na\weps|^2 \nn\\
	&\le& (3b+\Gamma)\al \io \frac{(\weps+1)^{\al+1}}{1+\eps\weps} \cdot e^{(\weps+1)^\al} \nn\\
	& & + c_2 \io |\na\veps|^2 + c_3 \io |\na\weps|^2 + c_4
	\qquad \mbox{for all $t>0$ and } \eps\in (0,\eps_0)
  \eea
  with $c_2\equiv c_2(K):=\frac{b(d-D)^2 e^\frac{1}{\al}}{2D}$,
  $c_3\equiv c_3(K):=\Gamma_2$ and
  $c_4\equiv c_4(K):=2b(\frac{e}{\al})^\frac{1}{\al} \cdot |\Om| + \Gamma_2$.\abs
  At this point, based on our restriction that $\al\le \min\{1,\frac{2}{n}\}$
  the interpolation result from Lemma \ref{lem32} applies so as to ensure that, again since
  \bas
	\io \weps = \io \ueps + \io \veps
	\le \io u_0 + \max \bigg\{ \io u_0, \io v_0\bigg\}
	\le 2K|\Om|
	\qquad \mbox{for all $t>0$ and } \eps\in (0,\eps_0)
  \eas
  by (\ref{mass}) and (\ref{K}), with $c_1$ as in (\ref{37.02}) and with $c_5\equiv c_5(K):=\Lam_4(\al,2K|\Om|)$,
  $\Lam_4(\cdot,\cdot)$ being taken from Lemma \ref{lem32}, we have
  \bas
	& & \hs{-20mm}
	(3b+\Gamma) \al \io \frac{(\weps+1)^{\al+1}}{1+\eps\weps} \cdot e^{(\weps+1)^\al} \nn\\
	&\le& (3b+\Gamma) \al \cdot c_1\al^2 \io \frac{(\weps+1)^{2\al-1}}{1+\eps\weps} \cdot e^{(\weps+1)^\al} |\na\weps|^2 \nn\\
	& & + (3b+\Gamma) \al \cdot c_5 \io |\na\weps|^2
	+ (3b+\Gamma) \al \cdot c_5
	\qquad \mbox{for all $t>0$ and } \eps\in (0,\eps_0).
  \eas
  As our smallness condition in (\ref{37.4}) guarantees that
  \bas
	(3b+\Gamma) \al\cdot c_1\al^2
	\le \frac{bD\al^2}{8},
  \eas
  this implies that (\ref{37.5}) entails the inequality
  \bas
	\yeps'(t) + \yeps(t)
	\le \heps(t):=c_2 \io |\na\veps|^2
	+ \big\{ c_3 + (3b+\Gamma) c_5\al\big\} \cdot \io |\na\weps|^2
	+ c_4 + (3b+\Gamma) c_5\al
  \eas
  for all $t>0$ and $\eps\in (0,1)$.
  Since from Lemma \ref{lem4} we know that with some $c_6=c_6(K)>0$ we have
  \bas
	\int_t^{t+1} \heps(s) ds \le c_6
	\qquad \mbox{for all $t>0$ and } \eps\in (0,\eps_0),
  \eas
  and since thus
  \bas
	\int_0^t e^{-(t-s)} \heps(s) ds
	\le \frac{c_6}{1-e^{-1}}
	\qquad \mbox{for all $t>0$ and } \eps\in (0,\eps_0)
  \eas
  according to an elementary inequality recorded in \cite[Lemma 3.4]{win_JFA}, this shows that
  \bas
	\yeps(t)
	&\le& \yeps(0) e^{-t} + \int_0^t e^{-(t-s)} \heps(s) ds \\
	&\le& b \io (u_0+v_0+1) e^{(u_0+v_0+1)^\al}
	+ \io v_0^2 e^{(u_0+v_0+1)^\al}
	+ \frac{c_6}{1-e^{-1}} \\
	&\le& b (2K+1) e^{(2K+1)^\al} \cdot |\Om|
	+ K^2 e^{(2K+1)^\al} \cdot |\Om|
	+ \frac{c_6}{1-e^{-1}}
	\qquad \mbox{for all $t>0$ and } \eps\in (0,\eps_0).
  \eas
  Observing that $\frac{\xi+1}{1+\eps\xi} \ge \frac{\eps\xi+1}{1+\eps\xi}=1$ for all $\xi\ge 0$ and $\eps\in (0,\eps_0)$ and hence
  \bas
	\yeps(t)\ge b\io e^{(\weps+1)^\al} \ge b \io e^{\ueps^\al}
	\qquad \mbox{for all $t>0$ and } \eps\in (0,\eps_0),
  \eas
  we may conclude as intended.
\qed
In its essence, our main result has thereby actually been accomplished already:\abs
\proofc of Theorem \ref{theo14}. \quad
  Given $K>0$, from Lemma \ref{lem37} and Lemma \ref{lem5} we know that there exist $c_1=c_1(K)>0$ and $c_2=c_2(K)>0$ such that
  whenever (\ref{init}) and (\ref{K}) hold, the solutions $(\ueps,\veps)$ of (\ref{0eps}) from Lemma \ref{lem_loc} satisfy
  \be{14.3}
	\io e^{\ueps^\al(\cdot,t)} \le c_1
	\quad \mbox{and} \quad
	\|\veps(\cdot,t)\|_{L^\infty(\Om)} \le c_2
	\qquad \mbox{for all $t>0$ and } \eps\in (0,\eps_0),
  \ee
  where $\al=\al(K)$ and $\eps_0=\eps_0(K)$ are as determined by Lemma \ref{lem37}.
  Apart from that, in view of (\ref{82.2}), (\ref{82.4}) and the Fubini-Tonelli theorem, there exists a null set $N\subset (0,\infty)$
  such that with $(u,v)$ and $(\eps_j)_{j\in\N}$ as provided by Lemma \ref{lem82} we have
  \bas
	\ueps(\cdot,t) \to u(\cdot,t)
	\quad \mbox{and} \quad
	\veps(\cdot,t) \to v(\cdot,t)
	\quad \mbox{a.e.~in } \Om
	\qquad \mbox{for all } t\in (0,\infty)\sm N
  \eas
  as $\eps=\eps_j\searrow 0$. By utilizing Fatou's lemma, from (\ref{14.3}) we thus infer that
  \bas
	\io e^{u^\al(\cdot,t)} \le c_1
	\quad \mbox{and} \quad
	\|v(\cdot,t)\|_{L^\infty(\Om)} \le c_2
	\qquad \mbox{for all } t\in (0,\infty) \sm N,
  \eas
  so that the claim results upon recalling from Lemma \ref{lem82} that $(u,v)$ indeed is a global weak solution of (\ref{0})
  in the sense specified in Theorem \ref{theo14}.
\qed
\mysection{Appendix: Proof of Lemma \ref{lem7}}
As annonced, let us finally describe how the $\eps$-dependent $W^{1,p}$ bounds claimed in Lemma \ref{lem7} can be derived from
Lemma \ref{lem6} and Lemma \ref{lem60}.\abs
\proofc of Lemma \ref{lem7}.\quad
  As an argument addressing a closely related situation can be found detailed in \cite[Lemmata 5.2-5.6]{taowin_JMPA},
  we may confine ourselves here with an outline of the main steps.\abs
  \underline{Step 1}: {\em Deriving an enery-type inequality for $\io |\na\veps|^p$ with $p\ge 4$.} \quad
  Using the identities
  \be{7.11}
	\na \ueps =\na\weps-\na \veps \quad\mbox{and}\quad \na \frac{\ueps}{1+\eps\ueps}=\frac{1}{(1+\eps\ueps)^2} \na \ueps
  \ee
  as well as the inequality
  \be{7.12}
	\Big|\frac{\ueps}{1+\eps\ueps}\Big| \le \frac{1}{\eps}
  \ee
  as seen in \cite[Lemma 5.3]{taowin_JMPA} we can obtain that for each $p\ge 4$ and any $\sigma>0$
  one can find $K_1(\sigma, \eps, p, u_0, v_0)>0$ satisfying
  \be{7.13}
	\frac{1}{p}\frac{d}{dt} \io |\na \veps|^p +\frac{d}{16} \io |\na \veps|^{p-2}|D^2 \veps|^2
	\le \sigma \io |\na \weps|^{p+2} +K_1\io|\na \veps|^{p+2}+K_1
  \ee
  for all $t\in (0,\tme)$.\abs
  \underline{Step 2}: {\em Establishing an enery-type inequality for $\io |\na \weps|^p$ with $p\ge 4$.} \quad
  Relying on (\ref{0w}) and (\ref{7.11}) and following \cite[Lemma 5.2]{taowin_JMPA}, we can show that for all $p\ge 4$
  there exists $K_2(\eps, p, u_0, v_0)>0$ satisfying
  \be{7.14}
	\hspace*{-2mm}
	\frac{1}{p}\frac{d}{dt} \io |\na \weps|^p +\frac{D}{2} \io |\na \weps|^{p-2}|D^2 \weps|^2
	\le K_2 \io |\na \weps|^{p-2}|D^2 \veps|^2 +K_2\io|\na \weps|^{p} +K_2\io|\na \veps|^{p}
 \ee
  for all $t\in (0,\tme)$.\abs
\underline{Step 3}: {\em Studying the evolution of the coupled-gradient functional $\io |\na\veps|^2 |\na \weps|^{p-2}$ for $p\ge 6$.}
\quad
  To control the first integral on the right-hand side of (\ref{7.14}), using (\ref{7.11}) and (\ref{7.12})
  we see that for each $p\ge 6$ and any $\eta>0$ there exists $K_3(\eta, \eps, p, u_0, v_0)>0$ fulfilling
  \bea{7.15}
	\frac{d}{dt} \io |\na\veps|^2 |\na \weps|^{p-2}
 	&+& \frac{d}{4}\io |\na \weps|^{p-2}|D^2 \veps|^2 \nn\\
 	&\le& \eta \io |\na \weps|^2 |D^2\weps|^2 + \eta\io |\na\weps|^{p+2}\nn\\
 	& & +K_3 \io |\na\veps|^{p-2}|D^2\veps|^2 +K_3\io |\na\veps|^{p+2} +K_3
  \eea
  for all $t\in (0,\tme)$ (cf.~\cite[Lemma 5.4]{taowin_JMPA}).\abs
\underline{Step 4}: {\em Recalling two useful interpolation inequalities.} \quad
  In order to expediently deal with the integrals $\io |\na \weps|^{p+2}$ and $\io |\na \veps|^{p+2}$
  appearing on the right-hand sides of (\ref{7.13})-(\ref{7.15}), we shall invoke
  the following two interpolation properties (cf. \cite[Lemma 5.5]{taowin_JMPA}):\\
  i) \ Given any $p\ge 2$, one can find $K_{41}>0$ such that whenever $\vp\in C^2(\bom)$ satisfies $\frac{\pa\vp}{\pa\nu}=0$
  on $\pO$, we have
  \be{7.16}
	\io |\na\vp|^{p+2}
	\le K_{41} \cdot \bigg\{ \io |\na \vp|^{p-2} |D^2 \vp|^2 \bigg\} \cdot \|\vp\|_{L^\infty(\Om)}^2.
  \ee
  ii) \ Let $\omega:(0,\infty)\to (0,\infty)$ be nondecreasing, and let $p\ge 2$ and $\tilde{\eta}>0$.
  Then there exists $K_{42}(\tilde{\eta},q,\omega)>0$ such that if $\vp\in C^2(\bom)$
  is such that $\frac{\pa\vp}{\pa\nu}=0$ on $\pO$ and that
  \bas
	\qquad \mbox{for each $\del>0$ and any $x\in\bom$ and $y\in\bom$ fulfilling $|x-y|<\omega(\del)$, we have }
	|\vp(x)-\vp(y)| < \del,
  \eas
  it follows that
  \be{7.17}
    	\io |\na \vp|^{p+2}
	\le \tilde{\eta} \io |\na\vp|^{p-2} |D^2\vp|^2
	+ K_{42}(\tilde{\eta},p,\omega) \|\vp\|_{L^\infty(\Om)}^{p+2}.
  \ee
\underline{Step 5}: {\em Completing the proof.} \quad
  To compensate the summands appearing on the right-hand sides of (\ref{7.13})-(\ref{7.15})
  by means of the diffusion-related integrals on the left-hand sides therein,
  we need to suitably select the two free small paramters $\eta$ and $\sigma$ in (\ref{7.15}) and (\ref{7.13})
  and design an appropriate linear combination of the functionals $\io |\na\veps|^p, \io |\na \weps|^p$ and
  $\io |\na\veps|^2 |\na \weps|^{p-2}$. For this purpose, we first invoke (\ref{7.16}) in conjunction with
  Lemma \ref{lem60} and Lemma \ref{lem5} to fix $c_1\equiv c_1(\eps, p, u_0, v_0)$ such that
  \be{7.18}
	\io |\na \weps|^{p+2} \le c_1 \io |\na \weps|^{p-2} |D^2\weps|^2
	\qquad\mbox{for all $t\in (0,\tme)$ and $\eps\in (0,1)$},
  \ee
  and taking $K_2 =K_2(\eps, p, u_0, v_0)$ as obtained in (\ref{7.14}), we let
  \be{7.19}
	\beta=\beta(\eps, p, u_0, v_0) :=\frac{d}{4K_2}
  \ee
  as well as
  \be{7.110}
	\eta =\eta(\eps, p, u_0, v_0) :=\min\Big\{\frac{\beta D}{4}, \, \, \frac{\beta D}{8c_1}\Big\}.
  \ee
  Thereupon fixing $K_3=K_3(\eta, \eps, p, u_0, v_0)$ such that (\ref{7.15}) holds, we take
  \be{7.111}
	b\equiv b(\eps, p, u_0, v_0) :=\frac{d}{32 K_3}
  \ee
  and
  \be{7.112}
	\sigma\equiv \sigma(\eps, p, u_0, v_0) :=\frac{\beta b D}{16 c_1}
  \ee
  and let $K_1=K_1(\sigma, \eps, p, u_0, v_0)$ be as accordingly be introduced near (\ref{7.13}).
  Now defining
  \be{7.113}
	y_\eps(t)
	:=\frac{1}{p} \io |\na \veps(\cdot, t)|^p +b\io |\na \veps(\cdot, t)|^2 |\na \weps(\cdot, t)|^{p-2}
	+\frac{\beta b}{p}\io |\na \weps(\cdot, t)|^p,
	\qquad t\in [0,\tme),
  \ee
  by straightforward computation of (\ref{7.13})-(\ref{7.15}) with (\ref{7.19})-(\ref{7.113}) we obtain that
  \bea{7.114}
	\hspace*{-6mm}
	y_\eps'(t)
	&=& -\Big(\frac{d}{16}-bK_3\Big) \io |\na \veps|^{p-2} |D^2\veps|^2
    	-\Big(\frac{bd}{4}-\beta b K_2\Big) \io |\na\weps|^{p-2} |D^2 \veps|^2\nn\\
	& & -\Big(\frac{\beta bD}{2}-b\eta\Big)\io |\na \weps|^{p-2} |D^2\weps|^2\nn\\
	& & +(K_1+bK_3) \io |\na\veps|^{p+2} +(\sigma +b\eta)\io |\na\weps|^{p+2}\nn\\
	& & + \beta b K_2 \io |\na\veps|^p
    	+ \beta b K_2 \io |\na\weps|^p  \nn\\
	& & +K_1 +bK_3\nn\\
	&\le& -\frac{d}{32} \io |\na \veps|^{p-2} |D^2\veps|^2
       -\frac{\beta b D}{4} \io |\na \weps|^{p-2} |D^2\weps|^2
        \nn\\[1mm]
	& & +I
   	\qquad\mbox{for all $t\in (0,\tme)$}
  \eea
  due to the fact that $\frac{d}{16}-bK_3=\frac{d}{32}, \frac{bd}{4}-\beta b K_2=0$ and
  $\frac{\beta bD}{2}-b\eta\ge \frac{\beta bD}{4}$ by (\ref{7.111}), (\ref{7.19}) and the first
  restriction in (\ref{7.110}), respectively, where for $t\in (0,\tme)$ we have set
  \bas
	\hspace{-10mm}
	I
	&:=& (K_1+bK_3) \io |\na\veps|^{p+2} +(\sigma +b\eta)\io |\na\weps|^{p+2}\nn\\
	& & + \beta b K_2 \io |\na\veps|^p
    	+ \beta b K_2 \io |\na\weps|^p
	+K_1 +bK_3.
  \eas
  Here, Young's inequality entails that for all $t\in (0,\tme)$,
  \be{7.116}
	\beta b K_2 \io |\na\veps|^p
	\le \beta b K_2 \io |\na\veps|^{p+2}
    	+ \beta b K_2 |\Om|
  \ee
  and
  \bea{7.117}
	\beta b K_2 \io |\na\weps|^p
	&=& \io \Big(\sigma |\na\weps|^{p+2}\Big)^\frac{p}{p+2}
    	\cdot \sigma^{-\frac{p}{p+2}} \beta b K_2 \nn\\
	&\le& \sigma \io |\na\weps|^{p+2} +c_2
  \eea
  with $c_2\equiv c_2(\eps, p, u_0, v_0):=\sigma^{-\frac{p}{2}}\big(\beta b K_2\big)^\frac{p+2}{2}$,
  and from (\ref{7.18}) we obtain that
  \be{7.118}
	(2\sigma +b\eta)\io |\na\weps|^{p+2} \le (2\sigma +b\eta) c_1\io |\na\weps|^{p-2}|D^2\weps|^2
	\qquad \mbox{for all } t\in (0,\tme).
  \ee
  Collecting (\ref{7.116})-(\ref{7.118}) leads to the inequality
  \be{7.119}
	I \le \Big(K_1+bK_3+\beta bK_2 +\frac{1}{p}+b\Big) \io |\na\veps|^{p+2}
 	+ (2\sigma +b\eta) c_1\io |\na\weps|^{p-2}|D^2\weps|^2 +c_3
	\qquad \mbox{for all } t\in (0,\tme)
  \ee
  with $c_3\equiv c_3(\eps, p, u_0, v_0):=K_1+bK_3 + \beta b K_2 |\Om| + c_2$,
  and inserting this into (\ref{7.114}) we arrive at the inequality
  \bea{7.120}
	y_\eps'(t)
	+ \frac{d}{32} \io |\na \veps|^{p-2} |D^2\veps|^2
	&\le& -\Big\{\frac{\beta bD}{4} -(2\sigma +b\eta)c_1\Big\} \io |\na \weps|^{p-2} |D^2\weps|^2\nn\\
	& & + (K_1+bK_3+\beta bK_2) \io |\na\veps|^{p+2} +c_3\nn\\
	&\le & (K_1+bK_3+\beta bK_2) \io |\na\veps|^{p+2} +c_3
  \eea
  for all $t\in (0,\tme)$, because $\frac{\beta bD}{4} -(2\sigma +b\eta)c_1
  =\big(\frac{\beta bD}{8} -2\sigma c_1\big) + \big(\frac{\beta bD}{8} -b\eta c_1\big)
  =\frac{\beta bD}{8} -b\eta c_1 \ge 0$ according to (\ref{7.112}) and the second restriction in (\ref{7.110}).
  Now in line with Lemma \ref{lem6}, we may apply (\ref{7.17}) to $\tilde{\eta}:=\frac{1}{K_1+bK_3+\beta bK_2}\cdot \frac{d}{32}$
  to find $c_4=c_4(\eps, p, u_0, v_0)>0$ such that
  \bas
 	(K_1+bK_3+\beta bK_2) \io |\na\veps|^{p+2}
 	\le \frac{d}{32} \io |\na \veps|^{p-2} |D^2\veps|^2 +c_4,
  \eas
  so that (\ref{7.120}) implies that
  \bas
	y_\eps'(t) \le c_3+c_4
	\qquad\mbox{for all $t\in (0,\tme)$}
  \eas
  and thus $y_\eps(t) \le y_\eps'(0) \cdot (c_3+c_4) \tme$ for all $t\in (0,\tme)$.
  Since $|\na \ueps(\cdot, t)|^p \le 2^{p-1} (|\na \weps(\cdot, t)|^p + |\na \veps(\cdot, t)|^p)$, this leads to (\ref{7.1}).
\qed

\bigskip

{\bf Acknowledgement.} \quad
  The first author was supported by the {\em National Natural Science Foundation of China
   (No. 12571222)}.  The second author  acknowledges support of the
  {\em Deutsche Forschungsgemeinschaft} (Project No.~462888149).
\bigskip
\end{document}